\documentclass[10pt, a4paper]{amsart}
\usepackage[arXiv]{rseaineng}
\usepackage[cal]{rseaincmds}
\usepackage{hhline, array, bm}
\usepackage{ifthen}

\usepackage{cellspace}
\usepackage{tikz}
\tikzstyle{rt} = [circle, draw, thin, inner sep = 2.5pt, minimum size = 2pt, fill = white]
\tikzstyle{hrt} = [circle, draw, thin, inner sep = 2.5pt, minimum size = 2pt, fill = black]

\tikzstyle{v} = [circle, draw, inner sep=2pt, minimum size=3pt, fill = white]
\tikzstyle{vvv} = [circle, draw, ultra thick, inner sep=2pt, minimum size=2pt, fill = black]
\tikzstyle{v3} = [circle, draw, ultra thick, inner sep=2pt, minimum size=2pt, fill = black]
\tikzstyle{v2} = [circle, draw, ultra thick, inner sep=2pt, minimum size=2pt, fill = black]
\tikzstyle{v6} = [circle, draw, ultra thick, inner sep=2pt, minimum size=2pt, fill = black]
\tikzstyle{vv} = [circle, draw, thick, inner sep=5pt, minimum size=9pt, fill = white]
\tikzstyle{bv} = [circle, draw, ultra thick, inner sep=2pt, minimum size=2pt, fill = black]
\tikzstyle{rv} = [circle, draw, inner sep=1pt, minimum size=3pt, fill = red]

\allowdisplaybreaks

\newcommand{\varpivee}{\varpi^\vee}

\DeclareMathOperator{\stt}{R}
\newcommand{\st}{{\stt}}

\DeclareMathOperator{\sttr}{re}
\newcommand{\re}{{\sttr}}

\tikzstyle{wv} = [circle, draw, thin, inner sep=1.5pt, minimum size=1.5pt, fill = white]
\tikzstyle{bv} = [circle, draw, thin, inner sep=1.5pt, minimum size=1.5pt, fill = black]
\tikzstyle{vv} = [circle, draw, thin, inner sep=1.5pt, minimum size=1.5pt, fill = white]

\numberwithin{equation}{section}

\title[]{Equivariant version of the characteristic quasi-polynomials of root systems}
\author[Ryo Uchiumi]{Ryo Uchiumi}
\date{\today}
\subjclass[2020]{05E18, 17B22} 
\keywords{hyperplane arrangements; characteristic quasi-polynomials; root systems.}
\thanks{The author was supported by JSPS KAKENHI, Grant Number 25KJ1735}

\begin{document}

\begin{abstract}
	An equivariant characteristic quasi-polynomial is a quasi-polynomial in $q$ consisting of the permutation character on the mod $q$ complement of the corresponding Coxeter arrangement.
	This concept is a refinement of the conventional characteristic quasi-polynomials of root systems.
	In this paper, we will show equivariant-theoretic refinements of the some properties of characteristic quasi-polynomials of root systems.
	Furthermore, we will explicitly compute equivariant characteristic quasi-polynomials of all irreducible reduced root systems and discuss the relationship with root systems constructed by the folding of the extended Dynkin diagrams.
\end{abstract}

\maketitle

\tableofcontents



\section{Introduction}

Let $L \cong \mathbb{Z}^\ell$ be a lattice and $\A$ be a hyperplane arrangement defined over $L$.
For each positive integer $q \in \mathbb{Z}_{>0}$, we consider the ``hyperplane arrangement"  $\A_q$ in $L/qL$ through the $q$-reduction.
Let $M(\A;q)$ be the complement of the arrangement $\A_q$ in $L/qL$.
Kamiya--Takemura--Terao \cite{KamiyaTakemuraTerao} introduced the \textbf{characteristic quasi-polynomial} $\chi_\A^\quasi$ of $\A$ as a counting function $\chi_{\A}^\quasi : q \lmapsto \#M(\A;q)$.
It is a quasi-polynomial in $q$ with gcd-property, that is, there exist a positive integer $\tilde{n}$ (\textbf{period}) and polynomials $f^{(1)},\ldots,f^{(\tilde{n})}$ (\textbf{constituents}) such that 
\begin{align}
	\chi_\A^\quasi(q) = f^{(r)}(q) \IF q \equiv r \pmod{\tilde{n}},
\end{align}
and $f^{(r_1)} = f^{(r_2)}$ if $\gcd\{\tilde{n},r_1\} = \gcd\{\tilde{n},r_2\}$.
In particular, for $r$ coprime to $\tilde{n}$, the constituent $f^{(r)}$ is equal to the \textbf{characteristic polynomial} $\chi_\A$ of $\A$, which is a polynomial defined by the combinatorial structure of $\A$ and the most important invariant, closely related to various aspects of $\A$.
Therefore the characteristic quasi-polynomial is a kind of refinement of the characteristic polynomial.
Furthermore, the notion of characteristic quasi-polynomials is, roughly speaking, the mod $q$ version of the Ehrhart theory.

Let $\PHI$ be a root system in a Euclidean space $E \cong \mathbb{R}^\ell$ and $\A_\PHI$ be the set of hyperplanes corresponding to each roots of $\PHI$ (\textbf{Coxeter arrangement}).
Then $\A_\PHI$ is a hyperplane arrangement defined over the coweight lattice $Z$ of $\PHI$.
The characteristic quasi-polynomial $\chi_\PHI^\quasi$ of $\PHI$ is defined as the characteristic quasi-polynomial of $\A_\PHI$ (with respect to $Z$).
The function $\chi_\PHI^\quasi$ has some interesting properties derived from the root system.
The first one is the following duality of $\chi_\PHI^\quasi$ (see \cref{thm2.2} \cref{thm2.2.3}):
\begin{align}
	\chi_\PHI^\quasi(q) = (-1)^\ell \chi_\PHI^\quasi(h-q), \label{dual}
\end{align}
where $h$ is the Coxeter number of $\PHI$.
The second one is that $\chi_\PHI^\quasi$ is almost equivalent to the Ehrhart quasi-polynomial $\Ell_{A_\circ}$ of the (open) fundamental alcove $A_\circ$ of $\PHI$ as follows (see \cref{yoshinagathm}):
\begin{align}
	\chi_\PHI^\quasi = \frac{\#W}{f}\cdot \Ell_{A_\circ}, \label{CEformula}
\end{align}
where $W$ is the Weyl group and $f$ is the index of connection of $\PHI$.

In \cite{Uchiumi}, the auther introduced an equivariant theory of the characteristic quasi-polynomials, inspired by the equivariant Ehrhart theory \cite{Stapledon}.
For an arrangement $\A$ invariant under a group action, the equivariant characteristic quasi-polynomial $\chi_{\A,q}$ of $\A$ is defined as the permutation character on the complement $M(\A;q)$, and it is a quasi-polynomial in $q$ (each constituent is a polynomial with class functions over $\mathbb{Q}$ as coefficients).
Substituting the identity element $1$ yields that $\chi_{\A,q}(1)$ is exactly equal to the characteristic quasi-polynomial $\chi_\A^\quasi(q)$.
Hence the equivariant characteristic quasi-polynomial is a kind of refinement of the characteristic quasi-polynomial.

In this paper, we consider the case of Coxeter arrangements.
For a root system $\PHI$, the arrangement $\A_\PHI$ is invariant under the action of the Weyl group $W$.
Thus the equivariant characteristic quasi-polynomial $\chi_{\PHI,q}$ of $\PHI$ is introduced as that of the Coxeter arrangement $\A_\PHI$ with respect to the action of $W$.
In \cite{Uchiumi}, the auther proved that $\chi_{\PHI,q}$ is exactly equal to the induced character of the equivariant Ehrhart quasi-polynomial $\chi_{A_\circ,q}$ of the (open) fundamental alcove $A_\circ$ (see \cref{thm4.2}):
\begin{align}
	\chi_{\PHI,q} = \Ind^W_{W_{A_\circ}}\chi_{A_\circ,q}, \label{indformula}
\end{align}
where $W_{A_\circ}$ is the subgroup of elements of $W$ that fix $A_\circ$ under the action of $w$ on $E/Z$.
It is an equivariant-theoretic refinement of the above formula \cref{CEformula}.
This paper also provides a refinement of the formula \cref{dual}.
The following is one of the main results of this paper:

\begin{theorem}[see \cref{thm4.6}]\label{thm1.1}
	Define a function $\delta : W \lra \mathbb{C}$ (character of $W$) by 
	\begin{align}
		\delta(w) \ceq (-1)^{\ell-r(w)},
	\end{align}
	where $r(w)$ is the dimention of the subspace of $E$ fixed by $w$.
	Then
	\begin{align}
		\chi_{\PHI,q} = (-1)^\ell \cdot \delta \cdot \chi_{\PHI,h-q}.
	\end{align}
\end{theorem}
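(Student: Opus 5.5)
Fix $w \in W$. By definition, $\chi_{\PHI,q}(w)$ is the number of points $x \in M(\A_\PHI;q) \subset Z/qZ$ with $wx = x$. The plan is to compute this number by identifying $Z/qZ$ with $\frac{1}{q}Z/Z$ inside the torus $E/Z$. Then $M(\A_\PHI;q)$ becomes the set of $q$-torsion points $x$ with $\langle \alpha, x\rangle \notin \mathbb{Z}$ for every root $\alpha$; these are exactly the points lying in the open alcoves. Counting $w$-fixed points there should be an Ehrhart-type count on the fixed locus of $w$, and the duality should come out of Ehrhart--Macdonald reciprocity applied to that fixed locus, with exponent $r(w)$ in place of $\ell$. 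This sign change is the origin of the factor $\delta$.

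Equivalently, one could start from the induction formula \cref{indformula}. By the Frobenius formula, $\chi_{\PHI,q}(w)$ is a sum, over conjugates $u w u^{-1}$ lying in $W_{A_\circ}$, of $\chi_{A_\circ,q}(uwu^{-1})$. So it suffices to prove an equivariant duality for the alcove,
\begin{align}
	\chi_{A_\circ,q}(g) = (-1)^{r(g)}\chi_{A_\circ,h-q}(g), \qquad g \in W_{A_\circ},
\end{align}
and then note that $r$ is a class function, so $(-1)^{r(g)} = (-1)^{\ell}\delta(w)$ for every conjugate $g$ of $w$.

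To prove the alcove duality, fix $g \in W_{A_\circ}$. It acts on the closed alcove $\overline{A}$ (modulo $Z$) as an affine map permuting the vertices. Its fixed locus $\overline{A}^g$ is a rational polytope of dimension $r(g)$, namely the convex hull of the barycenters of the $g$-orbits of vertices. Then $\chi_{A_\circ,q}(g)$ counts the points of $A_\circ^g$ in $\frac1q Z$. The key device is the dilation identity from \cref{yoshinagathm}-type arguments. Write the alcove as $\{x : \langle\alpha_i,x\rangle>0,\ \langle\tilde\alpha,x\rangle<1\}$ with $\tilde\alpha=\sum c_i\alpha_i$ and $c_0=1$. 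Then a point of $qA_\circ\cap Z$ corresponds to a tuple of positive integers $(m_0,\dots,m_\ell)$ with $\sum_{i} c_i m_i = q$. Similarly, a point of the closed $q'\overline{A}$ corresponds to a tuple of nonnegative integers with $\sum c_i m_i = q'$. The substitution $m_i \mapsto m_i - 1$ sends the first to the second with $q' = q - h$, because $\sum_{i=0}^\ell c_i = h$. The action of $g$ is a diagram automorphism permuting the coordinates $m_i$ while preserving the $c_i$, so this bijection is $g$-equivariant. Hence $\chi_{A_\circ,q}(g) = \#(\overline{A}^g \cap \tfrac{1}{q-h}Z)$, up to the lattice subtlety that $Z$ corresponds to a sublattice/coset condition on the $m_i$; I expect that condition to be preserved by the shift.

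Finally, apply Ehrhart--Macdonald reciprocity to the rational polytope $\overline{A}^g$ of dimension $r(g)$. Its interior points in the relevant lattice are exactly the $g$-fixed points of the open alcove, since the barycentric description identifies the relative interior with $A_\circ^g$. Reciprocity gives $\#(\overline{A}^g\cap\frac{1}{-t}Z) = (-1)^{r(g)}\#(A_\circ^g\cap \frac1t Z)$ as quasi-polynomials. Taking $t = h-q$ and combining with the previous step yields $\chi_{A_\circ,q}(g) = (-1)^{r(g)}\chi_{A_\circ,h-q}(g)$.

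The main obstacle is the lattice bookkeeping. Reciprocity must be applied to the lattice $(\frac1t Z)^g$ on the affine span of $\overline{A}^g$, and that span need not pass through a lattice point. So I would phrase everything through the coordinates $m_i$ restricted to $g$-invariant tuples, i.e.\ $m$ constant on $g$-orbits. This is a cone of dimension (number of orbits) $= r(g)+1$ cut by a congruence condition, and on that cone reciprocity for rational cones with congruence conditions applies cleanly.
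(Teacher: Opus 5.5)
Your proposal is correct and is essentially the paper's proof: you reduce via $\chi_{\PHI,q}=\Ind^W_{W_{A_\circ}}\chi_{A_\circ,q}$ to the alcove identity $\chi_{A_\circ,q}(w)=(-1)^{r(w)}\chi_{A_\circ,h-q}(w)$. You then show that Yoshinaga's shift $x\mapsto x-\sum_{i=0}^{\ell}\varpivee_i$ (your $m_i\mapsto m_i-1$) is compatible with the action of $w\in W_{A_\circ}$, and finish with Ehrhart reciprocity on $\overline{A_\circ^w}$, which has dimension $r(w)$. For the compatibility step, the paper checks it via $w(\sum_i\varpivee_i)=\sum_i\varpivee_i-h\varpivee_j$, while your coordinate-permutation argument is cleaner. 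The lattice subtlety you worry about does not arise: since $Z=\bigoplus_i\mathbb{Z}\varpivee_i$, the $m$-coordinates range over all integer tuples with no congruence condition, and rational Ehrhart reciprocity holds with respect to $Z$ even when the affine span of $\overline{A_\circ^w}$ contains no lattice points, which is how the paper applies it.
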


By the formula \cref{indformula}, to compute $\chi_{\PHI,q}$, it is sufficient to compute $\chi_{\PHI,q}(w)$ only when $w \in W_{A_\circ}$.
Let $\widehat{W_\aff} \ceq Z \rtimes W$ be the \textbf{extended Weyl group} of $\PHI$, and $\widehat{\OMEGA}$ denote the stabilizer subgroup with respect to $A_\circ$.
We will prove that the group $W_{A_\circ}$ is equal to the image $\OMEGA = \pi(\widehat{\OMEGA})$ under the projection $\pi : \widehat{W_\aff} \lra W$ (see \cref{prop4.6}).
The structure of the group $\OMEGA$ has been clarified by \cite{Garnier, KomrakovPremet}, and it is known that $\OMEGA$ can be regarded as a subgroup of automorphisms of the extended Dynkin diagram of $\PHI$.

Furthermore, we will also discuss the constituents of the quasi-polynomial
$\chi_{\PHI,q}(w)$ for each $w \in W_{A_\circ}$.
Since $w \in W_{A_\circ} = \OMEGA$ is an automorphism of the extended Dynkin diagram, we can obtain a (finite) root system $\PHI^w_\re$ constructed by folding of the extended Dynkin diagram by $w$ (see \cite{UchiumiRoot}).
We will show that the constituents of $\chi_{\PHI,q}(w)$ can be obtained from the constituents of the characteristic quasi-polynomial of the hyperplane arrangement derived from another root system obtained by modifying $\PHI^w_\re$.
The following is one of the main results of this paper:

\begin{theorem}[see \cref{thm5.6}]
	Let $w \in W_{A_\circ}$ and $o(w)$ denote the order of $w$.
	Define a periodic function $c : \mathbb{Z}_{>0} \lra \mathbb{Z}$ by
	\begin{align}
		c(q) = \begin{cases*}
			0				& if $q \not\in o(w)\mathbb{Z}$;\\
			\varphi(o(w))	& if $q \in o(w)\mathbb{Z}$,
		\end{cases*}
	\end{align}
	where $\varphi$ is the Euler's totient function.
	Let $\PHI'$ be a root system (obtained by modifying $\PHI^w_\re$) and $d \in \mathbb{Z}_{>0}$ as given in \cref{tablePHId}.
	Then we have
	\begin{align}
		\chi_{\PHI,q}(w) = c(q) \cdot \chi_{d\PHI',\,M}^\quasi(q),
	\end{align}
	where $\chi_{d\PHI',\, M}^\quasi$ is a characteristic quasi-polynomial of the arrangement corresponding to the dilated root system $d\PHI'$ with respect to the coweight lattice $M$ of $\PHI_\re^w$ (see \cref{sec2.2.3} and \cref{sec2.3} for details).
\end{theorem}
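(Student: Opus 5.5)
We need to compute $\chi_{\PHI,q}(w)$, the number of points $x \in M(\A_\PHI;q) \subset Z/qZ$ with $wx = x$. The plan has four steps: transfer the count to the alcove, identify the elements of $\widehat{\OMEGA}$ that act as $w$ on $E/Z$, count the fixed points inside the alcove, and recognise the resulting function as a characteristic quasi-polynomial.

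\textbf{Step 1: pass to the alcove.} Rescale by $1/q$, so that points of $M(\A_\PHI;q)$ become points of $\frac1q Z/Z$ lying off every affine root hyperplane. The affine Weyl group $W_\aff = Q^\vee \rtimes W$ acts simply transitively on alcoves, and the open alcoves tile the complement. Hence every such point is $u(y)$ for a unique $u \in W$ (modulo $Z$) and a unique $y \in A_\circ \cap \frac1q Z$. This is exactly the mechanism behind the induction formula \cref{indformula}. Applying the usual induced-character formula, $\chi_{\PHI,q}(w)$ becomes a sum over conjugates of $w$ in $\OMEGA$ of the equivariant Ehrhart values $\chi_{A_\circ,q}(w')$.

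\textbf{Step 2: the lift of $w$ is unique.} By \cref{prop4.6}, $w$ lifts to an element $\hat w \in \widehat{\OMEGA}$. I expect the lifts of $w$ that stabilise $A_\circ$ to be unique, because $\widehat{\OMEGA} \cap W_\aff$ is trivial. So $\chi_{A_\circ,q}(w)$ counts the lattice points $y \in A_\circ \cap \frac1q Z$ with $\hat w y = y$. Since $\OMEGA$ is abelian, the conjugation sum in Step 1 collapses to the factor $[W:\OMEGA]$ times a single term. This factor has to be reconciled with the normalisation implicit in \cref{tablePHId}; alternatively, I will use the ``fixed points on $Z/qZ$'' description directly, which avoids the factor altogether.

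\textbf{Step 3: count the fixed points, in barycentric coordinates.} Write $y = \sum_{i=0}^{\ell} c_i \varpivee_i/m_i$ in barycentric coordinates, where the $c_i > 0$ satisfy $\sum m_i c_i = 1$. Then $y \in \frac1q Z$ exactly when $qc_i \in \mathbb{Z}$ for all $i$, together with a congruence condition coming from $Z/Q^\vee$. The element $\hat w$ permutes the nodes of the extended Dynkin diagram, so $y$ is fixed precisely when the $c_i$ are constant on $\hat w$-orbits. Folding by $w$ produces the root system $\PHI^w_\re$ of \cite{UchiumiRoot}, and the orbit sums $\sum_{i\in O} m_i$ should be the marks of an alcove of a modified system $\PHI'$, dilated by $d$, relative to the lattice $M$.

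\textbf{Step 4: recognise the count and extract $c(q)$.} The count should equal the number of positive integer solutions of $\sum_O m_O n_O = q$, subject to the lattice congruence. This is an Ehrhart count, which I would convert to $\chi^\quasi_{d\PHI',M}(q)$ via \cref{yoshinagathm} applied to $(d\PHI', M)$. For the factor $c(q)$, the point is that a fixed point of an order-$o(w)$ diagram rotation exists only when $o(w) \mid q$. The multiplicity $\varphi(o(w))$ should then come from summing over the elements generating the same cyclic subgroup, or from the $[W:\OMEGA]$ normalisation.

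\textbf{Main obstacle.} The hardest part is the bookkeeping that matches the folded data, meaning the marks on orbits, the lattice $M$ and the dilation $d$, with a genuine root system $d\PHI'$. I expect this to require case-by-case verification against \cref{tablePHId} for types $A_\ell$, $D_\ell$, $E_6$ and $E_7$. I would first attempt a uniform argument through the folding theory and fall back on explicit checks. Pinning down the exact constant $\varphi(o(w))$ is the second delicate point.
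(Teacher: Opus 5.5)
Your outline does not prove the statement. The two steps that carry its content are either wrong or left as expectations.

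\textbf{Step 2's normalisation is wrong.} Write $\chi_{\PHI,q}(w)=\frac{1}{\#\OMEGA}\sum_{u^{-1}wu\in\OMEGA}\chi_{A_\circ,q}(u^{-1}wu)$. This sum has $\#C_W(w)\cdot\#([w]\cap\OMEGA)$ terms, where $C_W(w)$ is the centralizer and $[w]$ the conjugacy class. Commutativity of $\OMEGA$ plays no role here, and the prefactor equals $[W:\OMEGA]$ only if all of $[w]$ lies in $\OMEGA$.

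Take $A_3$ and $w=\omega_2=(13)(24)$. The points of $qA_\circ\cap Z$ fixed by $\widehat{\omega_2}$ are the positive solutions of $2x_0+2x_1=q$, so $\chi_{A_\circ,q}(w)=q/2-1$ for even $q$. On the other hand $\chi_{\PHI,q}(w)=q-2$. So the factor is $\#C_W(w)/\#\OMEGA=8/4=2$, not $[W:\OMEGA]=6$. It is not $\varphi(o(w))=1$ either, so neither of your guesses for the origin of $\varphi(o(w))$ survives.

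A smaller point about Step 3: there is no extra congruence. A point $y$ lies in $\frac1q Z$ iff $q(\alpha_i,y)\in\mathbb{Z}$ for $i\geq 1$. The fixed count is the number of positive solutions of $y_0+\sum_k m_k y_k=q/o(w)$. This does give the vanishing for $o(w)\nmid q$, once you know $m_k=\#S_k^j n_{s_k}/o(w)\in\mathbb{Z}$.

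\textbf{Step 4 has no mechanism producing $\PHI'$ and $d$.} \cref{yoshinagathm} is about a root system with respect to its own coweight lattice. Here $M$ is the coweight lattice of $\PHI^w_\re$, while the coweight lattice of $d\PHI'$ is $\frac1d$ times that of $\PHI'$. For $B_\ell$ or $D_\ell$ with $j=1$, $M$ is not even the coweight lattice of $\PHI'=C_{\ell-1}$ or $C_{\ell-2}$. For $C_\ell$ with $\ell$ even, $M$ does not contain the coroot lattice of $2\PHI'$, which is $\frac12$ times that of $C_{\ell/2}$. So the equivalence of all alcoves in a fundamental domain, which underlies \cref{yoshinagathm}, is unavailable. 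Your claim that the orbit sums are the marks of an alcove of $d\PHI'$ relative to $M$ is therefore unsupported.

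The missing idea is the one the paper uses.
\begin{itemize}
\item Since $R_w-I$ has elementary divisors $(1,\ldots,1,o(w))$, the fixed torus $T^w$ is the disjoint union of the translates $\pi_T\bigl(\frac{a}{o(w)}\pi_0^j\bigr)+E^w/M$, $1\leq a\leq o(w)$.
\item On such a translate, a root $\gamma$ imposes $\frac{a(\gamma,\pi_0^j)}{o(w)}+(\gamma^w,x)\notin\mathbb{Z}$.
\item Roots with $\gamma^w=0$ realise all offsets $\pm1,\ldots,\pm(o(w)-1)$. This kills every $a$ not coprime to $o(w)$. Since $\pi_T\bigl(\frac{a}{o(w)}\pi_0^j\bigr)$ must be $q$-torsion, it also forces $o(w)\mid q$. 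Together these give exactly $c(q)$.
\item For $\beta^w\neq 0$, the offset set $P(\beta^w)$ is $\{0,\pm1,\ldots,\pm(k-1)\}$ up to a scalar. \cref{lem6.1} then turns the conditions into $(k\beta^w,x)\notin\mathbb{Z}$. This dilates the appropriate roots of $\PHI^w_\re$ and yields $d\PHI'$ with respect to $M$.
\end{itemize}

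Your alcove route can be completed case by case. For $B_\ell$ and even $q$, for instance, $2^{\ell-1}(\ell-1)!\binom{q/2-1}{\ell-1}=(q-2)\cdots(q-2(\ell-1))$. But this requires computing centralisers and comparing with the explicit formulas of \cref{sec2.4}; it does not work uniformly as you sketched.
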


As a corollary, we can obtain the following:
\begin{corollary}[see \cref{chichi}]
	If $\gcd\{f,q\} = 1$, then 
	\begin{align}
		\chi_{\PHI,q} = \frac{\chi_{\PHI}^\quasi(q)}{\#W} \cdot \chi_{\st},
	\end{align}
	where $\chi_{\st}$ is the regular character of $W$.
	Furthermore, let $\tilde{n}_\PHI$ denote the minimum period of $\chi_\PHI^\quasi$.
	If $\gcd\{f,\tilde{n}_\PHI,q\} = 1$, then 
	\begin{align}
		\chi_{\PHI,q} = \frac{\chi_{\A_\PHI}(q)}{\#W} \cdot \chi_{\st}.
	\end{align}
\end{corollary}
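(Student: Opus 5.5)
We prove the corollary by showing that, when $\gcd\{f,q\}=1$, the character $\chi_{\PHI,q}$ vanishes away from the identity. A class function on $W$ that is zero on every $w\neq 1$ equals $\frac{\chi_{\PHI,q}(1)}{\#W}\chi_{\st}$. Moreover $\chi_{\PHI,q}(1)=\chi_\PHI^\quasi(q)$, since the permutation character evaluated at the identity counts the points of $M(\A_\PHI;q)$. So the whole task is the vanishing statement.

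\textbf{Step 1: reduce to $w\in W_{A_\circ}$.} By the induction formula \cref{indformula},
\begin{align}
	\chi_{\PHI,q}(w)=\frac{1}{\#W_{A_\circ}}\sum_{\substack{g\in W\\ g^{-1}wg\in W_{A_\circ}}}\chi_{A_\circ,q}(g^{-1}wg).
\end{align}
Hence $\chi_{\PHI,q}(w)=0$ as soon as $\chi_{\PHI,q}(u)=0$ for all $u\in W_{A_\circ}\setminus\{1\}$. The reason is that $\chi_{A_\circ,q}(u)$ and $\chi_{\PHI,q}(u)$ agree up to a positive combinatorial factor, since every conjugate of $u$ lying in $W_{A_\circ}$ is again nontrivial. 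Alternatively, one can run the argument directly with $\chi_{A_\circ,q}(u)$, which counts the points of $A_\circ\cap \frac1q Z$ fixed by $u$.

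\textbf{Step 2: show the values vanish for $u\neq 1$.} Take $u\in W_{A_\circ}=\OMEGA$ with $u\neq1$. By \cref{thm5.6},
\begin{align}
	\chi_{\PHI,q}(u)=c(q)\,\chi^\quasi_{d\PHI',M}(q),
\end{align}
and $c(q)=0$ unless $o(u)\mid q$. Now $\OMEGA\cong Z/Q^\vee$ (via $\widehat{\OMEGA}\cong \widehat{W_\aff}/W_\aff$) has order $f$, so $o(u)$ is a divisor of $f$ greater than $1$. If $\gcd\{f,q\}=1$, then $o(u)\nmid q$, hence $c(q)=0$ and $\chi_{\PHI,q}(u)=0$. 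Together with Step 1 this proves the first formula.

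\textbf{Step 3: the second formula.} Suppose only $\gcd\{f,\tilde n_\PHI,q\}=1$. Put $g=\gcd\{f,q\}$, so that $\gcd\{g,\tilde n_\PHI\}=1$.
\begin{itemize}
	\item For $u\neq1$ in $\OMEGA$ with $o(u)\mid q$, the factor $c(q)$ no longer vanishes. Instead we use the fact that $\chi^\quasi_{d\PHI',M}(q)=0$ for such $q$. I expect this to be the main obstacle. It would follow from the general structure of \cref{thm5.6}: $\chi_{\PHI,q}(u)$ is a nonnegative fixed-point count, and one shows it is zero whenever $\gcd\{\tilde n_\PHI,q\}$ is coprime to $o(u)$. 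For this I would check \cref{tablePHId} and verify that $\tilde n_{d\PHI'}$ is divisible by primes of $o(u)$ only in a way forcing vanishing. A cleaner route: the fixed points of $u$ in $A_\circ$ lie on a face where the barycentric (Kac) coordinates are constant on $u$-orbits of the extended Dynkin diagram, and their sums against marks $m_i$ must equal $q$.
	\item Granting this, $\chi_{\PHI,q}$ again vanishes off the identity, and the first formula gives $\chi_{\PHI,q}=\frac{\chi^\quasi_\PHI(q)}{\#W}\chi_{\st}$.
	\item Finally, since $\gcd\{\tilde n_\PHI,q\}$ may be larger than $1$, the constituent of $\chi^\quasi_\PHI$ at $q$ need not be $\chi_{\A_\PHI}$ a priori. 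However, $\chi_{\PHI,q}(1)=\frac{\#W}{\#W_{A_\circ}}\cdot(\text{interior alcove count})$ can be expressed via \cref{CEformula} and the known period structure. By the gcd-property, when $\gcd\{\tilde n_\PHI,q\}$ is coprime to $f$ the constituent coincides with $\chi_{\A_\PHI}$. This uses the known fact that the constituents of $\chi^\quasi_\PHI$ depend only on $\gcd\{q,\tilde n_\PHI\}$ and agree with $\chi_{\A_\PHI}$ exactly off primes dividing $f$, which I would verify case by case from the explicit quasi-polynomials.
\end{itemize}
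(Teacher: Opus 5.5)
Your Steps 1 and 2 are the paper's proof of the first formula. Since $\#W_{A_\circ}=\#\OMEGA=f$, every $u\in W_{A_\circ}\setminus\{1\}$ has $1<o(u)\mid f$. Hence $o(u)\nmid q$, $c(q)=0$, and $\chi_{\PHI,q}(u)=0$. (In Step 1 your stated reason, that $\chi_{A_\circ,q}(u)$ and $\chi_{\PHI,q}(u)$ agree up to a positive factor, is not what is needed. It suffices that $\chi_{\PHI,q}$ is a class function that already vanishes on elements not conjugate into $W_{A_\circ}$.)

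The genuine gap is Step 3. You read the second hypothesis literally as $\gcd\{f,\tilde n_\PHI,q\}=1$, which is weaker than $\gcd\{f,q\}=1$, and try to prove extra vanishing. Under that reading the claim is false, so the sub-claims you plan to verify cannot hold.
\begin{itemize}
\item Take $\PHI=A_1$. Here $f=2$ and $\tilde n_\PHI=1$, so the condition holds for every $q$. But $M(\PHI;q)=\mathbb{Z}_q\setminus\{0\}$, and for even $q$ the reflection $s=\omega_1$ fixes $q/2$, so $\chi_{\PHI,q}(s)=1\neq 0$. More generally, \cref{typeAell} gives $\chi_{\PHI,q}(\omega_j)>0$ for all large $q\in o(\omega_j)\mathbb{Z}$, although $\tilde n_\PHI=1$. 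This refutes your claim that $\chi^\quasi_{d\PHI',M}(q)=0$ whenever $o(u)\mid q$ and $\gcd\{\tilde n_\PHI,q\}$ is prime to $o(u)$.
\item Your third bullet also fails. The constituents of $\chi^\quasi_\PHI$ do not agree with $\chi_{\A_\PHI}$ away from primes dividing $f$. For $G_2$ we have $f=1$ and $\tilde n_\PHI=6$, yet $\chi^\quasi_\PHI(2)=0$ while $\chi_{\A_\PHI}(2)=(2-1)(2-5)=-3$.
\end{itemize}

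The paper's proof actually uses both $\gcd\{f,q\}=1$ and $\gcd\{\tilde n_\PHI,q\}=1$, so the hypothesis must be read as ``$q$ coprime to $f$ and to $\tilde n_\PHI$''. Under that reading nothing beyond your Step 2 is needed. The first formula applies. By the gcd-property and \cref{KamiyaTakemuraTerao}, the constituent of $\chi^\quasi_\PHI$ at such $q$ is $f^{(1)}=\chi_{\A_\PHI}$, which equals $(q-e_1)\cdots(q-e_\ell)$ by \cref{thm2.2}.
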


In a sense, the above corollary can be viewed as an equivaliant-refinement of the fact that $\chi_\PHI^\quasi(q) = \chi_{\A_\PHI}(q)$ if $q$ is coprime to $\tilde{n}_\PHI$.

The organization of this paper is as follows:
In \cref{sec2}, we will review some definitions and notations.
In particular, in \cref{sec2.3}, we consider a dilated root system $k\PHI$, dilate $\PHI$ by a factor of $k$, and compute its characteristic quasi-polynomial.
In \cref{sec2.4}, we summarize the results of the calculations of characteristic quasi-polynomials of root systems required for this paper.
In \cref{sec3.1}, we will recall the definition and notation of the equivariant version of the characteristic quasi-polynomials.
In \cref{sec3.2}, we confirm that the \cref{indformula} holds.
In \cref{sec3.3}, we will discuss the structure of the stabilizer group $W_{A_\circ}$.
In \cref{sec3.4}, we will prove the first main result \cref{thm4.6}.
The purpose of \cref{sec4} is to clarify the structure of the Weyl group action on the complement, in preparation for the second main result \cref{thm5.6}.
In \cref{secnew4.3}, we give a root system $\PHI_\re^w$ and express $\chi_{\PHI,q}(w)$ in terms of $\PHI_\re^w$.
The proof of \cref{chichi} is given at the end of \cref{secnew4.3}.
In \cref{sec5}, we will perform detailed calculations of $\chi_{\PHI,q}$ for each type of $\PHI$.
This completes the proof of \cref{thm5.6}.

\section{Characteristic quasi-polynomials of root systems} \label{sec2}

\subsection{Root system}\ 

\noindent
We recall some notions of root systems.
For more details, see \cite{Bourbaki, Humphreys}.

Let $E$ be a Euclidean space of dimension $\ell$ with inner product $(\cdot,\cdot)$, and $\PHI$ be an irreducible (not necessarily reduced) root system in $E$.
Note thet the emptyset $\PHI = \varnothing$ is a root system in the trivial Euclidean space $E = \{0\}$.
Let $Z$ denote the \textbf{coweigfht lattice} of $\PHI$ defined by
\begin{align}
	Z \ceq \bigset{x \in E}{(\alpha,x) \in \mathbb{Z} \tforall \alpha \in \PHI}.
\end{align}
Fix a basis $\DELTA = \{\alpha_1,\ldots,\alpha_\ell\}$ of $\PHI$ (set of simple roots), and let $\DELTA^\vee = \{\varpivee_1,\ldots,\varpivee_\ell\}$ be the dual basis of $\DELTA$, that is, 
\begin{align}
	(\alpha_i,\varpivee_j) = \delta_{ij}.
\end{align}
Then $Z$ is a free abelian group generated by $\DELTA^\vee$.
Therefore the root lattice 
\begin{align}
	Q \ceq \sum_{\alpha \in \PHI}\mathbb{Z}\alpha = \bigoplus_{\alpha_i \in \DELTA}\mathbb{Z}\alpha_i
\end{align}
can be regarded as the dual lattice of $Z$ by considering each root as a map via the inner product.
For each root $\alpha \in \PHI$, define $\alpha^\vee$, called a \textbf{coroot} of $\alpha$, as
\begin{align}
	\alpha^\vee \ceq \frac{2\alpha}{(\alpha,\alpha)},
\end{align}
and let $\veeQ$ be a lattice generated by coroots of $\PHI$, referred to as a \textbf{coroot lattice} of $\PHI$:
\begin{align}
	\veeQ = \sum_{\alpha \in \PHI}\mathbb{Z}\alpha^\vee.
\end{align}
It is well known that $\veeQ$ is a subgroup of $Z$ with finite index $f \ceq (Z:\veeQ)$ (called the \textbf{index of connection}).

Let $\tilde{\alpha}$ denote the highest root of $\PHI$.
Then it can be expressed as a linear combination
\begin{align}
	\tilde{\alpha} = n_1\alpha_1 + \cdots + n_\ell\alpha_\ell
\end{align}
for $n_1,\ldots,n_\ell \in \mathbb{Z}_{>0}$.
We define $\alpha_0 \ceq -\tilde{\alpha}$ and $n_0 = 1$.
It is known that $n_0 + \cdots + n_\ell$ is equal to the \textbf{Coxeter number} $h$ of $\PHI$.

Let $\PHI^+$ denote the set of positive roots corredponding to $\DELTA$.
For $\alpha \in \PHI^+$ and $k \in \mathbb{Z}$, define a hyperplane $H_{\alpha}^k$ by
\begin{align}
	H_\alpha^k \ceq \bigset{x \in E}{(\alpha,x) = k},
\end{align}
and let $s_{\alpha,k}$ denote the reflection with respect to $H_{\alpha}^k$.
It is known in \cite[\S4.9]{Humphreys} that the order of $W$ is equal to $f \cdot \ell! \cdot n_1 \cdots n_\ell$.
The \textbf{Weyl group} $W$ is the group generated by reflections $\tbigset{s_{\alpha,0}}{\alpha \in \PHI^+}$.
The \textbf{affine Weyl group} $W_\aff$ is the group generated by reflections $\tbigset{s_{\alpha,k}}{\alpha \in \PHI^+,\ k \in \mathbb{Z}}$.
It is well known that $W_\aff$ is the semidirect product $\veeQ \rtimes W$, where $\veeQ$ be regarded as the group of translations on $E$.
Let $M^\aff$ denote the complement of the hyperplanes $\A^\aff_\PHI = \tbigset{H_\alpha^k}{\alpha \in \PHI^+,\ k \in \mathbb{Z}}$.
Let $\mathcal{C}(\A^\aff)$ be the set of connected components of $M^\aff$.
Each eletemt of $\mathcal{C}(\A^\aff)$ is called an \textbf{alcove}.
In particular, the set 
\begin{align}
	A_\circ \ceq H_{\tilde{\alpha}}^{1,-} \cap \bigcap_{i = 1}^\ell H_{\alpha_i}^{0,+} = \bigset{x \in E}{(\alpha_i,x) > 0 \tforall i \in \{1,\ldots,\ell\},\ (\tilde{\alpha},x) < 1}
\end{align}
is an alcove, called the \textbf{fundamental alcove} of $\PHI$, where $H_{\tilde{\alpha}}^{1,-}$ and $H_{\alpha_i}^{0,+}$ denote the half spaces of $E$ defined as
\begin{align}
	H_{\tilde{\alpha}}^{1,-} = \bigset{x \in E}{(\tilde\alpha,x) < 1},\qquad
	H_{\alpha_i}^{0,+} = \bigset{x \in E}{(\alpha_i,x) > 0}.
\end{align}
The closure $\overline{A_\circ}$ is a convex hull
\begin{align}
	\overline{A_\circ} = \conv\Bigset{\frac{\varpivee_i}{n_i}}{i \in \{0,\ldots,\ell\}},
\end{align}
where $\varpivee_0 \ceq 0$.
Furthermore, $W_\aff$ acts simply transively on $\mathcal{C}(\A^\aff)$ \cite[\S4]{Humphreys}.
Thus the closure $\overline{A_\circ}$ is a fundamental domain for the action of $W$ on $E$.

For the classification and details of root systems, see \cref{roottable} and \cref{sec2.4}.

\begin{table}[h]
	
	\caption{Table of root systems}
	\label{roottable}
	
	\resizebox{\textwidth}{!}{
		\begin{tabular}{cc|c|c|c|c|c|c}
			\multicolumn{2}{c|}{$\PHI$}     & exponents                   & $n_1,\ldots,n_\ell$ & Coxeter number  $h$ & index of connection $f$ & order of the Weyl group            & minimum period $\tilde{n}_\PHI$ \\ \hline\hline
			$A_\ell$ & ($\ell \geq 1$) & $1,2,\ldots,\ell$           & $1,\ldots,1$                     & $\ell+1$            & $\ell+1$                & $(\ell+1)!$                        & $1$                             \\
			$B_\ell$ & ($\ell \geq 2$) & $1,3,\ldots,2\ell-1$        & $1,2,\ldots,2$                   & $2\ell$             & $2$                     & $2^\ell \cdot \ell!$               & $2$                             \\
			$C_\ell$ & ($\ell \geq 3$) & $1,3,\ldots,2\ell-1$        & $2,\ldots,2,1$                   & $2\ell$             & $2$                     & $2^\ell \cdot \ell!$               & $2$                             \\
			$D_\ell$ & ($\ell \geq 4$) & $1,3,\ldots,2\ell-3,\ell-1$ & $1,2\ldots,2,1,1$                & $2\ell-2$           & $4$                     & $2^{\ell-1} \cdot \ell!$           & $2$                             \\
			$E_6$    && $1,4,5,7,8,11$              & $1,2,2,3,2,1$                    & $12$                & $3$                     & $2^7\cdot 3^4 \cdot 5$             & $6$                             \\
			$E_7$    && $1,5,7,9,11,13,17$          & $2,2,3,4,3,2,1$                  & $18$                & $2$                     & $2^10 \cdot 3^4 \cdot 5 \cdot 7$   & $12$                            \\
			$E_8$    && $1,7,11,13,17,19,23,29$     & $2,3,4,6,5,4,3,2$                & $30$                & $1$                     & $2^14 \cdot 3^5 \cdot 5^2 \cdot 7$ & $60$                            \\
			$F_4$    && $1,5,7,11$                  & $2,3,4,2$                        & $12$                & $1$                     & $2^7 \cdot 3^2$                    & $12$                            \\
			$G_2$    && $1,5$                       & $1,2$                            & $6$                 & $1$                     & $2^2 \cdot 3$                      & $6$                            
		\end{tabular}
	}
	
	\

\end{table}

\subsection{Characteristic quasi-polynomial}
\subsubsection{For general hyperplane arrangements}\ 

\noindent
Let $L \cong \mathbb{Z}^\ell$ be a lattice, and $L^\vee \ceq \Hom_\mathbb{Z}(L,\mathbb{Z})$ be the dual lattice of $L$.
We set $L_\mathbb{R} \ceq L \otimes \mathbb{R}$.
For $\beta_1,\dots,\beta_n \in L^\vee$ and $k_1,\ldots,k_n \in \mathbb{Z}$, define a hyperplane arrangement $\A = \{H_1,\ldots,H_n\}$ by
\begin{align}
	H_i \ceq H_{\beta_i}^{k_i} = \bigset{x \in L_\mathbb{R}}{\beta_i(x) = k_i}.
\end{align}
For a positive integer $q \in \mathbb{Z}_{>0}$, let $\pi_q :L \lra L/qL$ denote the natural projection.
Give a ``hyperplane" 
\begin{align}
	H_{i,q} \ceq \bigset{\pi_q(x) \in L/qL}{\beta_i(x) \not\equiv k_i \pmod{q}}
\end{align}
in $L/qL$, and ``hyperplane arrangement" $\A_q = \{H_{1,q},\ldots,H_{n,q}\}$ in $L/qL$.
Let $M(\A;q)$ be the complement of $\A_q$ in $L/qL$, that is, 
\begin{align}
	M(\A;q) &\ceq (L/qL) \setminus \bigcup_{i=1}^nH_{i,q}\\
	&= \bigset{\pi_q(x) \in L/qL}{\beta_i(x) \not\equiv k_i \pmod{q} \tforall i \in \{1,\ldots,n\}}.
\end{align}

\begin{theorem}[{\cite[Theorem 2.4]{KamiyaTakemuraTerao}}, {\cite[Theorem 3.1]{KamiyaTakemuraTeraoNon}}]\label{KamiyaTakemuraTerao}
	There exists a non-negative integer $q_0 \in \mathbb{Z}_{\geq 0}$ such that $\#M(\A;q)$ is a quasi-polynomial in $q > q_0$, that is, there exist a positive integer $\tilde{n}$ and polynomials $f^{(1)},\ldots,f^{(\tilde{n})}$ such that 
	\begin{align}
		\#M(\A;q) = f^{(r)}(q) \IF r \equiv q \pmod{\tilde{n}}
	\end{align}
	for all $q > q_0$.
	Furthermore, it satisfies the following:
	\begin{iitemize}
		\item $f^{(r)}$ is monic and $\deg{f^{(r)}} = \ell$ for all $r$.
		\item $f^{(r_1)} = f^{(r_2)}$ if $\gcd\{\tilde{n},r_1\} = \gcd\{\tilde{n},r_2\}$ (\textbf{gcd-property}).
		\item $f^{(1)}$ is equal to the characteristic polynomial of $\A$ (see also \cite{Athanasiadis1996}).
		\item If $k_1 = \cdots = k_n = 0$ (i.e., $\A$ is a central arrangement), then $q_0 = 0$.
	\end{iitemize}
\end{theorem}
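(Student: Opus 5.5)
A natural route to \cref{KamiyaTakemuraTerao} is to count lattice points in the complement one coordinate-subspace pattern at a time. Since $\beta_i\in L^\vee$, we have $\beta_i(x+qy)\equiv\beta_i(x)\pmod q$, so the conditions defining $M(\A;q)$ are well defined on $L/qL$. By inclusion--exclusion over subsets $J\subseteq\{1,\ldots,n\}$,
\begin{align}
	\#M(\A;q) = \sum_{J} (-1)^{\#J}\, N_J(q),
\end{align}
where $N_J(q)$ is the number of $\pi_q(x)\in L/qL$ with $\beta_j(x)\equiv k_j\pmod q$ for all $j\in J$. Choose a basis of $L\cong\mathbb{Z}^\ell$. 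Then $N_J(q)$ is the number of solutions of the linear system $B_Jx\equiv k_J$ in $(\mathbb{Z}/q)^\ell$, where $B_J$ is the integer matrix with rows $\beta_j$. Everything therefore reduces to understanding $N_J(q)$.

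For $N_J(q)$ we use the Smith normal form $B_J=PDR$, with $P,R$ unimodular and $D$ diagonal with elementary divisors $e_{J,1}\mid\cdots\mid e_{J,r_J}$, where $r_J=\operatorname{rank}B_J$. Substituting $y=Rx$ and $c=P^{-1}k_J$ turns the system into $e_{J,s}y_s\equiv c_s\pmod q$ for $s\le r_J$, and $0\equiv c_s\pmod q$ for $s>r_J$. The free coordinates contribute $q^{\ell-r_J}$. Each diagonal equation has $\gcd(e_{J,s},q)$ solutions if $\gcd(e_{J,s},q)\mid c_s$, and none otherwise. The extra equations $0\equiv c_s$ hold for all large $q$ exactly when $c_s=0$; otherwise they fail for $q>\max|c_s|$. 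This is the source of the threshold $q_0$, and in the central case $k=0$ they are vacuous, giving $q_0=0$.

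Hence, for $q>q_0$, $N_J(q)=\varepsilon_J(q)\,q^{\ell-r_J}\prod_s\gcd(e_{J,s},q)$. The indicator $\varepsilon_J(q)$ depends only on $q$ modulo $\operatorname{lcm}_s e_{J,s}$, and in fact only on $\gcd(q,\tilde n)$, because divisibility of $c_s$ by $\gcd(e_{J,s},q)$ depends only on that gcd. Take $\tilde n$ to be the lcm of all elementary divisors over all $J$. Then each $N_J$, and hence $\#M(\A;q)$, is a polynomial in $q$ on each residue class modulo $\tilde n$, and that polynomial depends only on $\gcd(r,\tilde n)$. This gives the gcd-property.

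\textbf{Monicity and degree.} Only $J=\varnothing$ has $r_J=0$, so the $q^\ell$ term has coefficient $1$, and every other term has degree at most $\ell-1$.

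\textbf{The first constituent.} For $\gcd(q,\tilde n)=1$, all gcd factors equal $1$, and $N_J(q)=q^{\ell-\operatorname{rank}J}$ exactly when the system is consistent over $\mathbb{Q}$. Consistency over $\mathbb{Q}$ means $\bigcap_{j\in J}H_j\neq\varnothing$: for such $q$ each $e_{J,s}$ is invertible mod $q$, so the only possible obstruction is $c_s\neq0$ with $s>r_J$, which is exactly rational inconsistency. The resulting sum $\sum_{J:\,\cap H_j\ne\varnothing}(-1)^{\#J}q^{\ell-\operatorname{rank}J}$ is Whitney's formula for the characteristic polynomial $\chi_\A$.

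The main obstacle is the bookkeeping in this last comparison. I would first check the translation of the consistency condition to $\bigcap_{j\in J}H_j\neq\varnothing$ carefully; with that in hand, the identification with $\chi_\A$ reduces to invoking Whitney's theorem.
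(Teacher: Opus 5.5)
Your proof is correct and is essentially the Kamiya--Takemura--Terao argument: inclusion--exclusion over subsets $J$, combined with the Smith normal form of $B_J$. The paper does not reprove the theorem, but it records the central-case outcome of exactly this computation, $\chi_\A^\quasi(q)=\sum_{J}(-1)^{\#J}\bigl(\prod_{j}\gcd\{d_{J,j},q\}\bigr)q^{\ell-r(J)}$, immediately after the statement. Your treatment of the non-central threshold $q_0$ through the entries $c_s$ with $s>r_J$, and your identification of $f^{(1)}$ with Whitney's formula, correctly supply what the paper leaves to the citations. The monicity step tacitly uses $\beta_j\neq 0$, which holds because each $H_j$ is a hyperplane.
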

The function $\chi_\A^\quasi : q \lmapsto \#M(\A;q)$ is called the \textbf{chracteristic quasi-polynomial} of $\A$.
The number $\tilde{n}$ is called a \textbf{period} of $\chi_\A^\quasi$, and $f^{(r)}$ is called an \textbf{$\bm r$-th constituent} of $\chi_\A^\quasi$.

Let $\{\alpha_1,\ldots,\alpha_\ell\}$ be a $\mathbb{Z}$-basis of $L^\vee$.
Define an $\ell \times n$ matrix $S = (s_{ij})_{i,j}$ such that
\begin{align}
	\beta_j = s_{1j}\alpha_1 + \cdots + s_{\ell j}\alpha_\ell
\end{align}
for all $j \in \{1,\ldots,n\}$.
Let $K \ceq (k_1,\ldots,k_n)$.
Then we have
\begin{align}
	\chi_\A^\quasi(q) = \#\bigset{z \in (\mathbb{Z}/q\mathbb{Z})^\ell}{\text{$zS + K$ has no zero components}}.
\end{align}
In the case where $k_1 = \cdots = k_n = 0$, the characteristic quasi-polynomial $\chi_\A^\quasi$ can be calculated using the elementary divisors theory as follows:
For a nonempty subset $J \subseteq\{1,\ldots,n\}$, let $S_J$ be the $\ell \times \#J$ submatrix of $S$ corresponding to $J$, $r(J) \ceq \rank{S_{J}}$, and $d_{J,1},\ldots,d_{J,r(J)}$ be the elementary divisors of $S_{J}$.
When $J = \emptyset$, we agree that $r(J) = 0$.
Then we have the following \cite[Equation (7), (10)]{KamiyaTakemuraTerao}:
\begin{align}
	\chi_\A^\quasi(q) = \sum_{J \subseteq \{1,\ldots,n\}}(-1)^{\#J} \left( \prod_{j=1}^{r(J)}\gcd\{d_{J,j},q\} \right) q^{\ell-r(J)}.
\end{align}
Furthermore, 
\begin{align}
	\tilde{n}_\A \ceq \lcm\bigset{d_{J,j}}{J \subseteq \{1,\ldots,n\},\ J \neq \emptyset,\ j \in \{1,\ldots,r(J)\}}
\end{align}
is the minimum period of $\chi_\A^\quasi$ \cite[Theorem 1.2]{HigashitaniTranYoshinaga}.

The two constituents of the characteristic quasi-polynomial have the same coefficients for higher-degree terms.
For $s \in \{1,\ldots,\ell\}$, define
\begin{align}
	\mathcal{E}_s \ceq \bigset{d_{J,r(J)}}{J \subseteq \{1,\ldots,n\},\ 1 \leq \#J \leq s}.
\end{align}

\begin{proposition}[{\cite[Corollary 2.3]{KamiyaTakemuraTeraoRoot}}]\label{consticoef}
	Let $f^{(r_1)}$ and $f^{(r_2)}$ be the $r_1$-th and $r_2$-th constituents of $\chi_\A^\quasi$.
	For $s \in \{1,\ldots,\ell\}$, if $\gcd\{e,r_1\} = \gcd\{e,r_2\}$ for all $e \in \mathcal{E}_s$, then 
	\begin{align}
		\deg(f^{(r_1)} - f^{(r_2)}) < \ell-s,
	\end{align}
	that is, $f^{(r_1)}$ and $f^{(r_2)}$ have the same coefficients of degree $\ell-s$ and higher.
\end{proposition}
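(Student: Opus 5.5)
Starting from the elementary-divisor expansion
\begin{align}
	\chi_\A^\quasi(q) = \sum_{J \subseteq \{1,\ldots,n\}}(-1)^{\#J} \left( \prod_{j=1}^{r(J)}\gcd\{d_{J,j},q\} \right) q^{\ell-r(J)},
\end{align}
I take the $r$-th constituent $f^{(r)}$ to be the polynomial obtained by replacing each $\gcd\{d_{J,j},q\}$ with $\gcd\{d_{J,j},r\}$; this is legitimate because $\tilde{n}_\A$ is divisible by every $d_{J,j}$. Then $f^{(r_1)}-f^{(r_2)}$ is the sum, over nonempty $J$, of $(-1)^{\#J}q^{\ell-r(J)}$ times the difference of the two gcd-products. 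The coefficient of $q^{\ell-t}$ therefore only involves subsets $J$ with $r(J)=t$. So it suffices to show: for each $t\le s$ and each $J$ with $r(J)=t$, the products $\prod_{j}\gcd\{d_{J,j},r_1\}$ and $\prod_j\gcd\{d_{J,j},r_2\}$ agree.

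The first step is to reduce to subsets of size $r(J)$. Choose $J'\subseteq J$ with $\#J'=r(J)=t$ and the columns of $S_{J'}$ spanning the same $\mathbb{Q}$-space as those of $S_J$. That alone does not make the elementary divisors of $S_J$ and $S_{J'}$ equal, so I will not compare them directly. Instead I use divisibility. The elementary divisors satisfy $d_{J,1}\mid d_{J,2}\mid\cdots\mid d_{J,t}$. Hence if $\gcd\{d_{J,t},r_1\}=\gcd\{d_{J,t},r_2\}$, then for every $j$,
\begin{align}
	\gcd\{d_{J,j},r_1\}=\gcd\{d_{J,j},\gcd\{d_{J,t},r_1\}\}=\gcd\{d_{J,j},\gcd\{d_{J,t},r_2\}\}=\gcd\{d_{J,j},r_2\}.
\end{align}
So the whole product agrees as soon as the top divisor $d_{J,r(J)}$ satisfies the gcd condition.

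The real obstacle is that the hypothesis only covers $d_{J,r(J)}$ with $\#J\le s$, while $J$ with $r(J)=t\le s$ may have many more than $s$ elements. To handle this, I will show that $d_{J,r(J)}$ divides $d_{J',r(J')}$ for some $J'\subseteq J$ with $\#J'=r(J)$. With such a $J'$, applying the gcd argument above to the pair $d_{J,r(J)}\mid d_{J',t}$ (with $d_{J',t}\in\mathcal{E}_s$) transfers the equality $\gcd\{d_{J',t},r_1\}=\gcd\{d_{J',t},r_2\}$ to $d_{J,r(J)}$, since $\gcd\{d_{J,r(J)},r\}=\gcd\{d_{J,r(J)},\gcd\{d_{J',t},r\}\}$.

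For that divisibility I use the determinantal description. Write $\Delta_k(J)$ for the gcd of the $k\times k$ minors of $S_J$, so that $d_{J,k}=\Delta_k(J)/\Delta_{k-1}(J)$. Take a $\mathbb{Z}$-basis adapted to the saturation of the column lattice of $S_J$, so that $S_J$ becomes a $t\times\#J$ integer matrix of full row rank. Its top divisor $d_{J,t}$ is then the exponent of the finite group $\mathbb{Z}^t/\operatorname{col}(S_J)$. For $J'\subseteq J$ of full rank, $\operatorname{col}(S_{J'})\subseteq\operatorname{col}(S_J)$ inside the same saturation. So $\mathbb{Z}^t/\operatorname{col}(S_J)$ is a quotient of $\mathbb{Z}^t/\operatorname{col}(S_{J'})$, and the exponent of a quotient divides the exponent of the group. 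This gives $d_{J,t}\mid d_{J',t}$, as required.

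The two steps together give equal coefficients in every degree $\ell-t$ with $t\le s$, i.e. $\deg(f^{(r_1)}-f^{(r_2)})<\ell-s$. The step I expect to need the most care is this last quotient/exponent argument, in particular checking that the saturation is common to $J$ and $J'$.
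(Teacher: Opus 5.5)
Your proof is correct and complete. The constituent formula with $\gcd\{d_{J,j},r\}$, the reduction of each product to its top divisor via $d_{J,1}\mid\cdots\mid d_{J,r(J)}$, and the divisibility $d_{J,r(J)}\mid d_{J',r(J')}$ for a rank-preserving $J'\subseteq J$ (from the surjection $\mathbb{Z}^t/\operatorname{col}(S_{J'})\to\mathbb{Z}^t/\operatorname{col}(S_J)$ inside the common saturation) are all sound. The last step is exactly what is needed, since $\mathcal{E}_s$ only records subsets with $\#J\le s$ while the coefficient of $q^{\ell-t}$ involves every $J$ with $r(J)=t$.

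The paper gives no proof of its own here; it simply quotes Corollary 2.3 of Kamiya--Takemura--Terao. Your argument is the natural self-contained proof of that result. Your remark on the determinantal divisors $\Delta_k(J)$ is never used and can be dropped.
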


\subsubsection{The coset method}\ 

\noindent
We will introduce a method for calculating $\chi_\A^\quasi$ by decomposing into several sums.
Let $M$ be a sublattice of $L$, and suppose that $\rank{M} = \rank{L} = \ell$.
Let $\{\lambda_1,\ldots,\lambda_\ell\}$ be a $\mathbb{Z}$-basis for $L$ and $\{\mu_1,\ldots,\mu_\ell\}$ be a $\mathbb{Z}$-basis for $M$.
Define $P = (p_{ij})_{ij}$ be an $\ell \times \ell$ matrix satisfies 
\begin{align}
	\mu_i = p_{i1}\lambda_1 + \cdots + p_{i\ell}\lambda_\ell 
\end{align}
for all $i \in \{1,\ldots,\ell\}$.
Let $S_{\!M} \ceq PS$.
For $q \in \mathbb{Z}_{>0}$, let $\mathbb{Z}_q \ceq \mathbb{Z}/q\mathbb{Z}$.
Define a homomorphism $\pi : \mathbb{Z}_q^\ell \lra \mathbb{Z}_q^\ell$ by $\pi(z) = zP$.
Then $\im\pi$ is a subgroup of $\mathbb{Z}_q^\ell$ with finite index $b(q) \ceq (\mathbb{Z}_q^\ell : \im\pi)$.
Let $\{g_1,\ldots,g_{b(q)}\}$ denote the complete set of coset representatives of $\mathbb{Z}_q^\ell/\im\pi$.
For $i \in \{1,\ldots,b(q)\}$, define 
\begin{align}
	f_i(q) \ceq \#\bigset{z \in \mathbb{Z}_q^\ell}{\text{$zS_{\!M} + g_iS$ has no zero columns}}.
\end{align}
Note that there exists $i_0 \in \{1,\ldots,b(q)\}$ uniquely such that $g_{i_0} \in \im\pi$, and $f_{i_0}(q)$ is equal to the characteristic quasi-polynomial of $\A$ with respect to the lattice $M$.

\begin{theorem}[Coset method {\cite[Theorem 4.1]{KamiyaTakemuraTeraoRoot}}]\label{cosetmethod}
	Under the above settings, the following holds:
	\begin{align}
		\chi_{\A}^\quasi(q) = \frac{1}{b(q)}\sum_{i=1}^{b(q)}f_i(q).
	\end{align}
\end{theorem}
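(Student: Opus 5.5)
We prove the coset formula by viewing everything inside $L/qL$ and splitting along the cosets of the image of $M$. Write points of $L/qL$ as row vectors $x=(x_1,\dots,x_\ell)\in\mathbb{Z}_q^\ell$ in the basis $\{\lambda_j\}$. Then $x\in M(\A;q)$ exactly when $xS+K$ has no zero component. (In the setting of the statement $K=0$ implicitly, as in the definition of $f_i$, but the argument carries $K$ along unchanged.) A vector $z\in\mathbb{Z}_q^\ell$ written in the basis $\{\mu_i\}$ corresponds to the element $zP$ in the $\lambda$-coordinates, since $\mu_i=\sum_j p_{ij}\lambda_j$. Hence $\pi(z)=zP$ is exactly the natural map $M/qM\to L/qL$ read in coordinates, and for every $z$ we have $\pi(z)S=zPS=zS_{\!M}$.

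The key step is to decompose $\mathbb{Z}_q^\ell=\bigsqcup_{i=1}^{b(q)}(g_i+\im\pi)$ and count each piece. This gives
\begin{align}
\chi_\A^\quasi(q)=\sum_{i=1}^{b(q)}\#\bigset{y\in g_i+\im\pi}{yS \text{ has no zero component}}.
\end{align}
For fixed $i$, consider the surjection $\mathbb{Z}_q^\ell\to g_i+\im\pi$ given by $z\mapsto g_i+zP$. Every fibre is a coset of $\ker\pi$, so every fibre has size $\#\ker\pi$. Since $(g_i+zP)S=zS_{\!M}+g_iS$, the number of $z$ with $zS_{\!M}+g_iS$ having no zero column equals $\#\ker\pi$ times the number of good $y$ in the coset. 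In symbols,
\begin{align}
f_i(q)=\#\ker\pi\cdot\#\bigset{y\in g_i+\im\pi}{yS \text{ has no zero component}}.
\end{align}

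It remains to identify $\#\ker\pi$ with $b(q)$. Here the domain and the codomain of $\pi$ are the same finite group $\mathbb{Z}_q^\ell$. Therefore
\begin{align}
\#\ker\pi=\frac{q^\ell}{\#\im\pi}=(\mathbb{Z}_q^\ell:\im\pi)=b(q).
\end{align}
Dividing the identity for $f_i(q)$ by $b(q)$ and summing over $i$ gives $\chi_\A^\quasi(q)=\frac{1}{b(q)}\sum_i f_i(q)$, which is the claim.

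The only delicate point is this equality $\#\ker\pi=b(q)$. It holds only because $\pi$ is an endomorphism of a finite group, which in turn relies on $\rank M=\rank L$ so that $P$ is square. Everything else is bookkeeping, namely checking that row-vector conventions make $\pi(z)S=zPS$.
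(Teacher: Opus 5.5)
Your proof is correct and complete. You split $\mathbb{Z}_q^\ell$ into the cosets $g_i+\im\pi$ and note that $z\mapsto g_i+zP$ maps onto $g_i+\im\pi$ with fibres that are cosets of $\ker\pi$, so $f_i(q)=\#\ker\pi\cdot\#\{y\in g_i+\im\pi : yS\text{ has no zero component}\}$. You then use $\#\ker\pi=q^\ell/\#\im\pi=b(q)$, which holds because $\pi$ is an endomorphism of the finite group $\mathbb{Z}_q^\ell$; this fibre count is exactly what the result rests on. The paper gives no proof of its own and only quotes Kamiya--Takemura--Terao; the conventions you rely on ($K=0$, and $\{\lambda_j\}$ being the basis dual to $\{\alpha_i\}$ so that $xS$ lists the values $\beta_j(x)$) are the ones implicit in the paper's setting.
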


\subsubsection{For root systems}\ \label{sec2.2.3}

\noindent
Let $\PHI$ be an irreducible root system, and let $L$ be a sublattice of $Z$ satisfying $\rank{L} = \ell$.
For $q \in \mathbb{Z}_{>0}$, define a set
\begin{align}
	M(\PHI,L;q) \ceq \bigset{\pi_q(x) \in L/qL}{(\beta,x) \not\equiv 0 \pmod{q} \tforall \beta \in \PHI^+}
\end{align}
and a function $\chi_{\PHI,L}^\quasi : q \lmapsto \#M(\PHI,L;q)$.
Then $\chi_{\PHI,L}^\quasi$ is equal to the characteristic quasi-polynomial of $\A_\PHI = \tbigset{H_\beta^0}{\beta \in \PHI^+}$ with respect to $L$.
In the case where $L = Z$, the function $\chi_\PHI^\quasi \ceq \chi_{\PHI,Z}^\quasi$ is called the \textbf{characteiristic quasi-polynomial} of $\PHI$.
If $\PHI = \varnothing$, then we have $\chi_{\varnothing}^\quasi(q) = 1$ for all $q \in \mathbb{Z}_{>0}$.
Let $\tilde{n}_\PHI$ denote the minimum period of $\chi_\PHI^\quasi$.
See \cref{roottable} for the value of $\tilde{n}_\PHI$ for each root system.

The characteristic quasi-polynomial $\chi_\PHI^\quasi$ has the following special properties.
\begin{theorem}\label{thm2.2}
	Let $\PHI$ be an irreducible reduced root system with the exponents $e_1,\ldots,e_\ell$ and the Coxeter number $h$.
	\begin{eenumerate}
		\item The 1st constituent of $\chi_\PHI^\quasi$ (i.e., the characteristic polynomial $\chi_{\A_\PHI}$ of $\A_\PHI$) is equal to the following (\cite[Corollary 3.3]{OrlikSolomonTerao}): 
		\begin{align}
			\chi_{\A_\PHI}(t) = (t - e _1) \cdots (t - e_\ell).
		\end{align}
		\item Let $q \in \mathbb{Z}_{>0}$.
		Then $\chi_\PHI^\quasi(q) > 0$ if and only if $q \geq h$ (\cite[Theorem 7.2]{KamiyaTakemuraTeraoRoot}).
		\item\label{thm2.2.3} $\chi_\PHI^\quasi(q) = (-1)^\ell \chi_\PHI^\quasi(h-q)$, where 
		\begin{align}
			\chi_\PHI^\quasi(q) = f^{(r)}(q) \IF q \equiv r \pmod{\tilde{n}_\PHI}
		\end{align}
		for $q < 0$ (\cite[Corollary 3.8]{Yoshinaga}).
	\end{eenumerate}
\end{theorem}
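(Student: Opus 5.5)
The statement collects three known facts, and I would derive parts (2) and (3) from one uniform lattice-point description of $M(\PHI,Z;q)$, treating part (1) separately.

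\textbf{Step 1: reduction to the fundamental alcove.} A class $\pi_q(x)\in Z/qZ$ lies in $M(\PHI,Z;q)$ exactly when $x/q\in E$ avoids every affine hyperplane $H_\beta^k$ with $\beta\in\PHI^+$ and $k\in\mathbb{Z}$. So $\#M(\PHI,Z;q)$ is the number of points of $\frac1q Z/Z$ lying in the open alcoves of the torus $E/Z$.

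The affine Weyl group $W_\aff=\veeQ\rtimes W$ preserves $\frac1q Z$, since $W$ preserves $Z$ and $\veeQ\subseteq Z$. It also acts simply transitively on the alcoves. The torus $E/\veeQ$ contains $\#W$ alcoves, so $E/Z$ contains $\#W/f$ of them. Each of these is the image of $A_\circ$ under a map preserving $\frac1q Z$. Hence
\begin{align}
\chi_\PHI^\quasi(q)=\frac{\#W}{f}\cdot\#\Bigl(A_\circ\cap\tfrac1q Z\Bigr).
\end{align}

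\textbf{Step 2: coordinates on the alcove (part (2)).} Write $y=qx\in Z$ and $c_i=(\alpha_i,y)$ for $1\le i\le\ell$, so that $y=\sum_i c_i\varpivee_i$. Put $c_0=q-\sum_{i\ge1}n_ic_i$. Then $x\in A_\circ$ if and only if the $c_i$ are integers with $c_0,\ldots,c_\ell\ge1$ and $\sum_{i=0}^\ell n_ic_i=q$. Such a tuple exists if and only if $q\ge\sum_{i=0}^\ell n_i=h$, which gives (2).

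\textbf{Step 3: duality (part (3)).} The counting function $q\mapsto\#(A_\circ\cap\frac1qZ)$ is the Ehrhart quasi-polynomial $\Ell_{A_\circ}$ of the open rational simplex. For the closed simplex $\overline{A_\circ}$ the same description holds with $c_i\ge0$.

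Substituting $c_i\mapsto c_i+1$ gives a bijection between the lattice points of $\overline{A_\circ}$ at level $q$ and those of $A_\circ$ at level $q+h$. Thus $\Ell_{\overline{A_\circ}}(q)=\Ell_{A_\circ}(q+h)$.

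Ehrhart--Macdonald reciprocity for rational polytopes gives $\Ell_{A_\circ}(-q)=(-1)^\ell\Ell_{\overline{A_\circ}}(q)$, read as an identity of constituents. Combining the two identities gives $\Ell_{A_\circ}(-q)=(-1)^\ell\Ell_{A_\circ}(q+h)$. Replacing $q$ by $q-h$ and multiplying by $\#W/f$ yields (3). The interpretation of $\chi_\PHI^\quasi$ at negative arguments via constituents is precisely the one reciprocity uses.

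\textbf{Step 4: the exponents (part (1)), the main obstacle.} The alcove computation alone does not easily produce the roots $e_1,\ldots,e_\ell$. I would invoke Saito's theorem that $\A_\PHI$ is free with exponents equal to the degrees of the basic invariants minus one, which are the $e_i$. Terao's factorization theorem then gives $\chi_{\A_\PHI}(t)=\prod_i(t-e_i)$.

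An alternative route is to compare with Step 2 for primes $q$ coprime to the period, where the first constituent applies. This requires showing that the polynomial in $q$ counting weighted compositions $\sum n_ic_i=q$, multiplied by $\#W/f$, factors as $\prod_i(q-e_i)$. That appears to need a case-by-case check over the classification, which is why I would use the freeness argument instead.
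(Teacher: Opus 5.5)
Your proposal is correct. The paper gives no proof of this theorem: all three parts are quoted from the literature. The only argument it recalls is Yoshinaga's for (3), in the proof of \cref{thm4.6}. That argument combines \cref{yoshinagathm}, the shift bijection $x \mapsto x - \sum_{i=0}^\ell \varpivee_i$ from $qA_\circ \cap Z$ onto $(q-h)\overline{A_\circ}\cap Z$, and Ehrhart reciprocity.

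Your Steps 1 and 3 are exactly this. Your substitution $c_i \mapsto c_i+1$ is the inverse of that shift, written in the coordinates $c_i=(\alpha_i,y)$, $c_0=q-\sum_{i\ge1} n_ic_i$.

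For (2), you go beyond the paper. Instead of citing Kamiya--Takemura--Terao, you read it off the same parametrization of $qA_\circ\cap Z$ by positive integer solutions of $\sum_{i=0}^\ell n_ic_i=q$. This makes (2) and (3) two consequences of one lattice-point description and gives a classification-free proof of (2).

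For (1), as you note, the alcove count does not visibly factor, so some input from invariant theory is needed. Saito's freeness of $\A_\PHI$ together with Terao's factorization theorem is a standard and valid substitute for the Orlik--Solomon--Terao citation.

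One small point to spell out in Step 1. Two steps there rely on the same fact: passing from $\#W$ alcoves in $E/\veeQ$ to $\#W/f$ in $E/Z$, and identifying $A_\circ\cap\frac1qZ$ with its image in the torus. The fact is that no two distinct points of an alcove differ by an element of $Z$. On $A_\circ$ this holds because each $(\alpha_i,\cdot)$ takes values in $(0,1)$. It transfers to every alcove because the linear parts of $W_\aff$ preserve $Z$. Consequently $Z/\veeQ$ acts freely on the alcoves of $E/\veeQ$, and the projection to $E/Z$ is injective on each alcove.
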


For $q \in \mathbb{Z}_{>0}$, define
\begin{align}
	\Ell_{A_\circ}(q) \ceq \#(qA_\circ \cap Z),\qquad
	\Ell_{\overline{A_\circ}}(q) \ceq \#(q\overline{A_\circ} \cap Z).
\end{align}
These are quasi-polynomial in $q$, and called \textbf{Ehrhart quasi-polynomials} of $A_\circ$ and $\overline{A_\circ}$.
It is well known that the following \textbf{Ehrhart reciprocity} holds:
\begin{align}
	\Ell_{A_\circ}(q) = (-1)^{\ell}\Ell_{\overline{A_\circ}}(-q).
\end{align}

\begin{theorem}[{\cite[Proposition 3.7]{Yoshinaga}}, see also \cite{Suter}]\label{yoshinagathm}
	The characteristic quasi-polynomial $\chi_\PHI^\quasi$ is equal to an integer multiple of the Ehrhart quasi-polynomial of $A_\circ$:
	\begin{align}
		\chi_\PHI^\quasi(q) = \frac{\#W}{f} \cdot \Ell_{A_\circ}(q) = \frac{\#W}{f} \cdot (-1)^\ell \cdot \Ell_{\overline{A_\circ}}(-q). \label{yoshinagaeq}
	\end{align}
\end{theorem}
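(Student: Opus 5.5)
The plan is to compare two descriptions of the complement. On one side, $M(\PHI,Z;q)$ counts the points of $Z/qZ$ lying on no hyperplane $(\beta,x)\equiv 0 \pmod q$. On the other side, $qA_\circ\cap Z$ consists of the lattice points in the open dilated fundamental alcove. I will show that the natural map $Z \to Z/qZ$, restricted to points off every affine hyperplane $H_\beta^{kq}$, identifies $M(\PHI,Z;q)$ with the set of $W_\aff$-translates (rescaled by $q$) of the lattice points in $qA_\circ$, modulo $qZ$.

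Concretely, $x\in Z$ maps into $M(\PHI,Z;q)$ if and only if $(\beta,x)\notin q\mathbb{Z}$ for all $\beta\in\PHI^+$. This says exactly that $x/q$ lies in $M^\aff$, the complement of the affine Coxeter arrangement $\A^\aff_\PHI$. Since $W_\aff$ acts simply transitively on alcoves, every such $x/q$ can be written uniquely as $u(y)$ with $u\in W_\aff$ and $y\in A_\circ$. Moreover $qy\in Z$, because $W_\aff=\veeQ\rtimes W$ preserves $Z$ and $\frac1q Z$ (the group $W$ preserves $Z$, and the translations by $\veeQ\subseteq Z$ do too). Passing to the quotient by $qZ$, we must count orbits of $\frac1q Z\cap M^\aff$ under translation by $Z$. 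The group $Z\rtimes W=\widehat{W_\aff}$ acts on alcoves, and its subgroup $\veeQ\rtimes W$ acts simply transitively. Hence the set of alcoves modulo $Z$-translation is in bijection with $(\veeQ\rtimes W)/(\veeQ)$ adjusted by $Z/\veeQ$. More precisely, the number of alcoves modulo $Z$ equals $[\,W_\aff : \veeQ\,]/[Z:\veeQ]=\#W/f$, provided $Z$ acts freely on alcoves, which holds since translations act freely.

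The key step, and the main obstacle, is to make this count rigorous. We need that each $Z$-orbit of points in $\frac1qZ\cap M^\aff$ corresponds to exactly $\#W/f$ copies of $A_\circ\cap\frac1qZ$. My route is to take a fundamental domain for $Z$ acting on $E$, namely a union of exactly $\#W/f$ open alcoves (up to boundary) obtained from coset representatives of $\veeQ\rtimes W$ modulo $Z$. Each alcove $u(A_\circ)$ with $u\in W_\aff$ contains $\#(qA_\circ\cap Z)$ points of $\frac1qZ$, because $u$ is an affine isometry preserving $\frac1qZ$. Points on walls are excluded on both sides, so the boundary issues disappear. Summing gives $\#M(\PHI,Z;q)=\frac{\#W}{f}\Ell_{A_\circ}(q)$. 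One care point: the volume of $E/Z$ equals $\frac{\#W}{f}\operatorname{vol}(A_\circ)$, which gives an independent check of the multiplicity $\#W/f$ via $\operatorname{vol}(E/\veeQ)=\#W\operatorname{vol}(A_\circ)$.

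Finally, the second equality follows directly from the Ehrhart reciprocity $\Ell_{A_\circ}(q)=(-1)^\ell\Ell_{\overline{A_\circ}}(-q)$ stated just above, interpreted on the quasi-polynomial constituents.
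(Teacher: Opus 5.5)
Your proof is correct and follows essentially the paper's route. The paper's justification, in the remark after the decomposition $\chi_{\Phi}^{\mathrm{quasi}}(q)=\sum_{C\in\mathcal{C}^\diamond}\#(qC\cap Z)$, splits a fundamental domain of $Z$ (the parallelepiped $P^\diamond$) into alcoves, each contributing $\#(qA_\circ\cap Z)$ points, and then applies Ehrhart reciprocity. The one difference is that the paper gets the number $\#W/f$ of alcoves from the volume ratio $\mathrm{vol}(P^\diamond)/\mathrm{vol}(A_\circ)$ quoted from Humphreys, whereas you derive it group-theoretically from the simple transitivity of $W_{\mathrm{aff}}$ together with the free action of $Z/Q^\vee$ on the $\#W$ classes of alcoves modulo $Q^\vee$, which makes your count self-contained.
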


\subsection{Characteristic quasi-polynomial of dilated root systems}\ \label{sec2.3}

\noindent
Let $\PHI$ be a root system and $Z$ denote its coweight lattice.
For $k \in \mathbb{Z}_{>0}$, define a set $k\PHI$ by 
\begin{align}
	k\PHI \ceq \bigset{k\beta}{\beta \in \PHI}.
\end{align}
It is clear that $k\PHI$ is a root system isomorphic to $\PHI$.
The characteristic quasi-polynomial $\chi_{k\PHI,Z}^\quasi$ of the dilated root system $k\PHI$ can be computed as follows.

\begin{theorem}\label{dilated}
	Let $\PHI$ be a root system with the coweight lattice $Z$, and $k \in \mathbb{Z}_{>0}$.
	For $r \in \mathbb{Z}_{>0}$ and $g \ceq \gcd\{k,r\}$, the $r$-th constituent $f_k^{(r)}$ of $\chi_{k\PHI,Z}^\quasi$ can be expressed using the $(g^{-1}r)$-th constituent $f_1^{(g^{-1}r)}$ of $\chi_\PHI^\quasi$ as follows: 
	\begin{align}
		f_k^{(r)}(t) = g^\ell \cdot f_1^{(g^{-1}r)}(g^{-1}t).
	\end{align}
	Therefore the quasi-polynomial $\chi_{k\PHI,Z}^\quasi$ has the minimum period $k\tilde{n}_\PHI$.
\end{theorem}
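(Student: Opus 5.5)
The plan is to work directly with the elementary-divisor formula for central arrangements (\cite[Equation (7), (10)]{KamiyaTakemuraTerao}) and compare $k\PHI$ with $\PHI$ term by term. Fix the $\mathbb{Z}$-basis $\DELTA$ of $Q = Z^\vee$, and let $S$ be the $\ell \times \#\PHI^+$ coefficient matrix of $\PHI^+$ in this basis. Then $k\PHI$, viewed as an arrangement over the same lattice $Z$, has the positive roots $k\beta$, so its coefficient matrix is $kS$. For each $J$ the submatrix $(kS)_J = kS_J$ has the same rank $r(J)$ as $S_J$, and its elementary divisors are $kd_{J,1},\ldots,kd_{J,r(J)}$. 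The argument never uses reducedness, so it applies to any root system.

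\textbf{Step 1.} Fix $q$ and put $g' = \gcd\{k,q\}$. The key arithmetic identity is
\begin{align}
\gcd\{kd,q\} = g'\gcd\{(k/g')d,\,q/g'\} = g'\gcd\{d,\,q/g'\},
\end{align}
where the last equality uses $\gcd\{k/g',q/g'\}=1$. Substituting this into the formula gives
\begin{align}
\chi_{k\PHI,Z}^\quasi(q) = \sum_{J}(-1)^{\#J} g'^{\,r(J)}\prod_{j}\gcd\{d_{J,j},q/g'\}\, q^{\ell-r(J)}.
\end{align}
Writing $q^{\ell-r(J)} = g'^{\,\ell-r(J)}(q/g')^{\ell-r(J)}$ then yields
\begin{align}
\chi_{k\PHI,Z}^\quasi(q) = g'^{\,\ell}\,\chi_\PHI^\quasi(q/g').
\end{align}

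\textbf{Step 2 (constituents).} Take $q \equiv r \pmod{k\tilde n_\PHI}$. Then $\gcd\{k,q\}=\gcd\{k,r\}=g$, so $g'=g$. Moreover $q/g \equiv r/g \pmod{(k/g)\tilde n_\PHI}$, and hence $q/g \equiv r/g \pmod{\tilde n_\PHI}$. Consequently $\chi_\PHI^\quasi(q/g)=f_1^{(g^{-1}r)}(q/g)$. Combined with Step 1, this shows that on the residue class of $r$ modulo $k\tilde n_\PHI$ the function is the polynomial $g^\ell f_1^{(g^{-1}r)}(g^{-1}t)$. In particular $k\tilde n_\PHI$ is a period.

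\textbf{Step 3 (minimality).} I would invoke \cite[Theorem 1.2]{HigashitaniTranYoshinaga}: the minimum period equals the lcm of all elementary divisors over nonempty $J$. For $k\PHI$ these divisors are exactly $kd_{J,j}$, so the minimum period is
\begin{align}
\lcm\{kd_{J,j}\} = k\,\lcm\{d_{J,j}\} = k\tilde n_\PHI.
\end{align}

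The only delicate point is the gcd manipulation in Step 1, together with checking that the residue class of $q$ modulo $k\tilde n_\PHI$ determines $\gcd\{k,q\}$; both are elementary. If the $\lcm$ formula were not available, minimality would need a direct comparison of constituents, but here it can simply be cited.
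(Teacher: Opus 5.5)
Your proposal is correct and is essentially the paper's argument. Both scale the elementary divisors by $k$ in the Kamiya--Takemura--Terao formula and use $\gcd\{kd,r\} = g\gcd\{d, g^{-1}r\}$ (valid since $\gcd\{k/g, r/g\}=1$) to pull out $g^{r(J)}$ and rescale $t$. The paper does this directly on the constituent, with $r$ in place of $q$; you first prove the pointwise identity $\chi_{k\PHI,Z}^\quasi(q) = g'^{\,\ell}\chi_\PHI^\quasi(q/g')$ and then pass to residue classes modulo $k\tilde{n}_\PHI$. Your Step 3, using \cite[Theorem 1.2]{HigashitaniTranYoshinaga} and $\lcm\{kd_{J,j}\} = k\lcm\{d_{J,j}\}$, correctly proves the minimum-period claim, which the paper only asserts.
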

\begin{proof}
	Let $S$ be the coefficient matrix of $\PHI^+$ with respect to $Z$.
	Then
	\begin{align}
		\chi_{k\PHI,Z}^\quasi(q) = \#\bigset{z \in (\mathbb{Z}/q\mathbb{Z})^\ell}{\text{$z(kS)$ has no zero components}}
	\end{align}
	for each $k \in \mathbb{Z}_{>0}$.
	For any integer matrix $X$ and $k \in \mathbb{Z}_{>0}$, the elementary divisors of the matrix $kX$ is exactly $k$ times the elementary divisors of $X$.
	Hence we can see that 
	\begin{align}
		f_k^{(r)}(t) = \sum_{J \subseteq \{1,\ldots,n\}}(-1)^{\#J} \left( \prod_{j=1}^{r(J)}\gcd\{kd_{J,j},r\} \right) t^{\ell-r(J)},
	\end{align}
	where $S_J$ is the submatrix corresponding to $J$, $r(J) = \rank{S_J}$, and $\{d_{J,1},\ldots,d_{J,r(J)}\}$ is the elementary divisors of $S_J$.
	Since $c \cdot \gcd\{a,b\} = \gcd\{ca,cb\}$ for all $a,b,c \in \mathbb{Z}_{>0}$, we have
	\begin{align}
		f_k^{(r)}(t) &= \sum_{J \subseteq \{1,\ldots,n\}}(-1)^{\#J} \left( \prod_{j=1}^{r(J)}\gcd\{kd_{J,j},r\} \right) t^{\ell-r(J)}\\
		&= \sum_{J \subseteq \{1,\ldots,n\}}(-1)^{\#J} \left( \prod_{j=1}^{r(J)}\Bigl( g \cdot \gcd\{g^{-1}kd_{J,j},\, g^{-1}r\} \Bigr) \right) t^{\ell-r(J)}\\
		&= \sum_{J \subseteq \{1,\ldots,n\}}(-1)^{\#J} \left( \prod_{j=1}^{r(J)}\gcd\{d_{J,j},\, g^{-1}r\} \right) g^{r(J)}t^{\ell-r(J)}\\
		&= g^\ell\sum_{J \subseteq \{1,\ldots,n\}}(-1)^{\#J} \left( \prod_{j=1}^{r(J)}\gcd\{d_{J,j},\, g^{-1}r\} \right) (g^{-1}t)^{\ell-r(J)}\\
		&= g^\ell \cdot f_1^{(g^{-1}r)}(g^{-1}t). \qedend
	\end{align}
\end{proof}

For a specific example, see \cref{exampleofdilated}.

\subsection{List of characteristic quasi-polynomials of root systems}\ \label{sec2.4}

\noindent
This subsection summarizes the results of the calculations for characteristic quasi-polynomial used in this paper.
Most of these were computed in \cite{KamiyaTakemuraTeraoRoot}.

\subsubsection{type $A_\ell$}\ 

\noindent
Let $\{e_1,\ldots,e_{\ell+1}\}$ be a standard basis for $\mathbb{R}^{\ell+1}$, and define
\begin{align}
	E \ceq \bigset{(x_1,\ldots,x_{\ell+1}) \in \mathbb{R}^{\ell+1}}{x_1 + \cdots + x_{\ell+1} = 0}.
\end{align}
Then
\begin{align}
	\PHI = \bigset{\pm(e_i - e_j)}{i,j \in \{1,\ldots,\ell+1\},\ i < j}
\end{align}
is a root system of type $A_\ell$.
The roots $\DELTA = \{e_1-e_2, \, \ldots, \, e_\ell-e_{\ell+1}\}$ is a basis of $\PHI$.
Then
\begin{align}
	\DELTA^\vee = \Bigset{(e_1 + \cdots + e_i) - \frac{i}{\ell+1}(e_1 + \cdots + e_{\ell+1})}{i \in \{1,\ldots,\ell\}}.
\end{align}

\begin{proposition}[type $A_\ell$, {\cite[\S3]{KamiyaTakemuraTeraoRoot}}]\label{typeAroot}
	Let $\PHI$ be a root system of type $A_\ell$.
	Then
	\begin{align}
		\chi_\PHI^\quasi(q) = (q-1) \cdots (q-\ell).
	\end{align}
\end{proposition}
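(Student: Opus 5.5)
We prove $\chi_\PHI^\quasi(q) = (q-1)\cdots(q-\ell)$ by counting points of $M(\PHI;q)$ directly in the coordinates given by $\DELTA^\vee$. We write $x = \sum_i z_i \varpivee_i$ with $z = (z_1,\ldots,z_\ell) \in \mathbb{Z}_q^\ell$. Since $\alpha_i = e_i - e_{i+1}$, every positive root has the form $e_i - e_j = \alpha_i + \cdots + \alpha_{j-1}$ for $i<j$. Hence
\begin{align}
	(e_i - e_j, x) = z_i + z_{i+1} + \cdots + z_{j-1}.
\end{align}
The complement $M(\PHI;q)$ therefore consists of those $z$ for which every consecutive block sum $z_i + \cdots + z_{j-1}$, with $1 \le i < j \le \ell+1$, is nonzero modulo $q$.

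To count these, we introduce partial sums $y_0 = 0$ and $y_k = z_1 + \cdots + z_k$ for $1 \le k \le \ell$. The map $z \mapsto (y_1,\ldots,y_\ell)$ is a bijection of $\mathbb{Z}_q^\ell$, since it is unitriangular. Each block sum equals $y_{j-1} - y_{i-1}$. So the condition becomes: the $\ell+1$ elements $y_0 = 0, y_1, \ldots, y_\ell$ of $\mathbb{Z}_q$ are pairwise distinct. We count ordered choices of distinct $y_1,\ldots,y_\ell$ in $\mathbb{Z}_q \setminus \{0\}$. There are $q-1$ choices for $y_1$, then $q-2$ for $y_2$, and so on, giving $(q-1)(q-2)\cdots(q-\ell)$. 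This product is $0$ automatically when $q \le \ell$, which matches the formula.

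There is no real obstacle here. The only point needing care is that the dual basis $\DELTA^\vee$ is indeed a $\mathbb{Z}$-basis of $Z$, which the paper already states. With that, the count is valid for every $q$, so the period is $1$, consistent with \cref{roottable}. As a check, one could instead use the elementary-divisor formula: all $d_{J,j}$ equal $1$ because the positive roots form a totally unimodular system. That forces $\chi_\PHI^\quasi = \chi_{\A_\PHI}$, and \cref{thm2.2} then gives the exponents $1,\ldots,\ell$.
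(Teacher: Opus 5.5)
Your proof is correct and complete, and it takes a different route from the paper, which gives no argument of its own and simply imports the formula from \cite[\S3]{KamiyaTakemuraTeraoRoot}. Your three steps all work: in the coordinates $x=\sum_i z_i\varpivee_i$ each positive root $e_i-e_j$ pairs to the block sum $z_i+\cdots+z_{j-1}$; the unitriangular change to partial sums identifies $M(\PHI;q)$ with ordered $\ell$-tuples of distinct nonzero elements of $\mathbb{Z}/q\mathbb{Z}$; and the count $(q-1)\cdots(q-\ell)$ holds for every $q\geq 1$, including $q\leq \ell$, where both sides vanish.

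Within the paper's framework, the natural derivation is the one you sketch as a check. The coefficient matrix of $\PHI^+$ with respect to $\DELTA$ has consecutive ones in each column, so it is totally unimodular and every elementary divisor $d_{J,j}$ equals $1$. The elementary-divisor formula for $\chi_\PHI^\quasi$ then reduces to Whitney's formula for $\chi_{\A_\PHI}$ for all $q$. Finally, the first item of \cref{thm2.2} (Orlik--Solomon--Terao) gives the factorization with exponents $1,\ldots,\ell$.

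What your direct count buys is that it is completely elementary. It establishes period $1$ and the factorization in one stroke, without the quasi-polynomiality theorem or the exponent theorem. The elementary-divisor route instead explains the trivial period structurally: it holds for any arrangement whose coefficient matrix is totally unimodular. It also matches how the paper later handles dilations through \cref{dilated}.

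Your count adapts just as easily to the dilated systems needed in \cref{typeAell}. For $k\PHI$ with $g=\gcd\{k,q\}$, the condition becomes that $0,y_1,\ldots,y_\ell$ are pairwise distinct modulo $q/g$. This gives $g^{\ell}(q/g-1)\cdots(q/g-\ell)=(q-g)(q-2g)\cdots(q-\ell g)$, in agreement with \cref{dilated}.
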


\subsubsection{type $B_\ell$}\ 

\noindent
Let $\{e_1,\ldots,e_\ell\}$ be a standard basis for $E = \mathbb{R}^\ell$.
Then
\begin{align}
	\PHI = \bigset{\pm(e_i-e_j),\ \pm(e_i+e_j),\ \pm e_k}{i,j,k \in \{1,\ldots,\ell\},\ i < j}
\end{align}
is a root system of type $B_\ell$.
The roots $\DELTA = \{e_1-e_2, \, \ldots, \, e_{\ell-1}-e_\ell, \, e_\ell\}$ is a basis of $\PHI$.
Then
\begin{align}
	\DELTA^\vee = \bigset{e_1 + \cdots + e_i}{i \in \{1,\ldots,\ell\}}.
\end{align}
Hence the coweight lattice $Z$ is isomorphic to the lattice generated by $\{e_1,\ldots,e_\ell\}$.

\begin{proposition}[type $B_\ell$, {\cite[Theorem 4.8]{KamiyaTakemuraTeraoRoot}}]\label{typeBroot}
	Let $\PHI$ be a root system of type $B_\ell$.
	Then
	\begin{align}
		\chi_\PHI^\quasi(q) = \begin{cases*}
			(q-1)(q-3) \cdots (q-(2\ell-1)) 		& \textup{if $q \not\in 2\mathbb{Z}$};\\
			(q-2)(q-4) \cdots (q-2(\ell-1))(q-\ell) & \textup{if $q \in 2\mathbb{Z}$}.
		\end{cases*}
	\end{align}
\end{proposition}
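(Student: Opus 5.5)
We count $M(\PHI,Z;q)$ directly, without the elementary-divisor formula. From the description of $\DELTA^\vee$, the coweight lattice $Z$ is the standard lattice $\bigoplus_i \mathbb{Z}e_i$. Hence $Z/qZ \cong \mathbb{Z}_q^\ell$, with coordinates $x=(x_1,\ldots,x_\ell)$. The positive roots are $e_i\pm e_j$ ($i<j$) and $e_k$. So $\chi_\PHI^\quasi(q)$ is the number of $x \in \mathbb{Z}_q^\ell$ satisfying
\begin{align}
	x_k \not\equiv 0 \quad\text{for all } k, \qquad x_i \not\equiv \pm x_j \quad\text{for all } i<j .
\end{align}
The idea is to partition the nonzero residues of $\mathbb{Z}_q$ into classes under $a \mapsto -a$. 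The conditions then say exactly that all $x_k$ are nonzero and lie in pairwise distinct classes. The argument splits according to the parity of $q$.

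If $q$ is odd, no nonzero residue satisfies $a\equiv -a$. The $q-1$ nonzero residues therefore split into $(q-1)/2$ two-element classes $\{a,-a\}$. We choose coordinates successively. There are $q-1$ choices for $x_1$. Each previously chosen coordinate forbids exactly two values (its class), so $x_j$ has $q-1-2(j-1)$ choices. The product is $(q-1)(q-3)\cdots(q-(2\ell-1))$, as claimed.

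If $q$ is even, the residue $q/2$ is the unique nonzero residue with $a\equiv -a$. The other $q-2$ nonzero residues form $(q-2)/2$ two-element classes. Since $x_i\not\equiv x_j$, at most one coordinate can equal $q/2$, and we split into two cases.
\begin{itemize}
	\item No coordinate equals $q/2$. The same successive count gives $(q-2)(q-4)\cdots(q-2\ell)$.
	\item Exactly one coordinate equals $q/2$. There are $\ell$ choices for its position. The remaining $\ell-1$ coordinates fill distinct two-element classes, giving $\ell\,(q-2)(q-4)\cdots(q-2(\ell-1))$.
\end{itemize}
Adding the two cases and factoring out $(q-2)\cdots(q-2(\ell-1))$ leaves $(q-2\ell)+\ell = q-\ell$. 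This gives $(q-2)(q-4)\cdots(q-2(\ell-1))(q-\ell)$.

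There is no real obstacle here. The only point needing care is the even case: one must observe that the self-inverse residue $q/2$ can occur at most once, and that this occurrence produces the extra factor $\ell$, which combines with $q-2\ell$ into $q-\ell$.
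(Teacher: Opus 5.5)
Your argument is correct and complete. The paper gives no proof of this proposition; it simply quotes \cite[Theorem 4.8]{KamiyaTakemuraTeraoRoot}. Your proposal therefore supplies a self-contained argument rather than reproducing one from the paper.

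The structure of your count is sound. You identify $Z$ with $\bigoplus_i \mathbb{Z}e_i$, which is valid because the dual basis $\varpivee_i = e_1+\cdots+e_i$ is unitriangular in the $e_i$. The problem then becomes placing $\ell$ nonzero residues in pairwise distinct classes $\{a,-a\}$. For even $q$, the single self-inverse class $\{q/2\}$ contributes the extra $\ell$ that turns $q-2\ell$ into $q-\ell$.

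The paper does a count of the same kind for $X_q^m$ in the proof of \cref{prop2.9}. There it needs a recursion, because the shifted conditions $z_i+z_j+1\not\equiv 0,\tfrac{q}{2}$ destroy the clean class structure you exploit here. Your route is also much more economical than the paper's general tool, the elementary-divisor formula. That formula gives quasi-polynomiality and the period $2$ at once, but recovering the factored constituents would require summing over all column subsets of the coefficient matrix.

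One point is left implicit: that the successive-choice products stay valid for small $q$. The number of choices for $x_j$, namely $q-1-2(j-1)$ (respectively $q-2j$), is exact whenever a valid partial assignment exists. By parity, the first factor to become nonpositive is exactly $0$. So your products correctly return $0$ (for instance for $q<h=2\ell$) rather than a spurious signed value.
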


\subsubsection{type $C_\ell$}\ 

\noindent
Let $\{e_1,\ldots,e_\ell\}$ be a standard basis for $E = \mathbb{R}^\ell$.
Then
\begin{align}
	\PHI = \bigset{\pm(e_i-e_j),\ \pm(e_i+e_j),\ \pm 2e_k}{i,j,k \in \{1,\ldots,\ell\},\ i < j}
\end{align}
is a root system of type $B_\ell$.
The roots $\DELTA = \{e_1-e_2, \, \ldots, \, e_{\ell-1}-e_\ell, \, 2e_\ell\}$ is a basis of $\PHI$.
Then
\begin{align}
	\DELTA^\vee = \bigset{e_1 + \cdots + e_i}{i \in \{1,\ldots,\ell-1\}} \sqcup \left\{ \frac{e_1 + \cdots + e_\ell}{2} \right\}.
\end{align}
Hence the lattice generated by $\{e_1,\ldots,e_\ell\}$ is a subgroup of the coweight lattice $Z$ of index $2$.

\begin{proposition}[type $C_\ell$, {\cite[Theorem 4.8]{KamiyaTakemuraTeraoRoot}}]\label{typeCroot}
	Let $\PHI$ be a root system of type $C_\ell$.
	Then
	\begin{align}
		\chi_\PHI^\quasi(q) = \begin{cases*}
			(q-1)(q-3) \cdots (q-(2\ell-1)) 		& \textup{if $q \not\in 2\mathbb{Z}$;}\\
			(q-2)(q-4) \cdots (q-2(\ell-1))(q-\ell) & \textup{if $q \in 2\mathbb{Z}$}.
		\end{cases*}
	\end{align}
	Let $L$ be a lattice generated by $\{e_1,\ldots,e_\ell\}$.
	Then
	\begin{align}
		\chi_{\PHI,L}^\quasi(q) = \begin{cases*}
			(q-1)(q-3) \cdots (q-(2\ell-1)) 		& \textup{if $q \not\in 2\mathbb{Z}$;}\\
			(q-2)(q-4) \cdots (q-2(\ell-1))(q-2\ell) & \textup{if $q \in 2\mathbb{Z}$}.
		\end{cases*}
	\end{align}
\end{proposition}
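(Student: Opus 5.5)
For the lattice $L = \bigoplus_i \mathbb{Z}e_i$, and for $Z$ when $q$ is odd, I will count points by hand. For $Z$ with $q$ even, I will use the coset method (\cref{cosetmethod}) with sublattice $M = L \subseteq Z$.

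\emph{Step 1: the count for $L$.} A point of $L/qL$ is an $\ell$-tuple $x = (x_1,\ldots,x_\ell)$ in $\mathbb{Z}_q^\ell$. The positive roots give the conditions
\begin{align}
	x_i - x_j \not\equiv 0,\quad x_i + x_j \not\equiv 0,\quad 2x_k \not\equiv 0 \pmod q.
\end{align}
The condition $2x_k \not\equiv 0$ confines every coordinate to a set $R \subseteq \mathbb{Z}_q$. For $q$ odd, $R = \mathbb{Z}_q\setminus\{0\}$, so $\#R = q-1$. For $q$ even, $R = \mathbb{Z}_q\setminus\{0,q/2\}$, so $\#R = q-2$. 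In both cases $R$ is closed under negation and contains no element with $a \equiv -a$. Hence $R$ splits into $\#R/2$ two-element classes $\{a,-a\}$.

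The remaining conditions say exactly that the coordinates lie in pairwise distinct classes. Choosing them one at a time gives $\#R(\#R-2)\cdots(\#R-2(\ell-1))$ points. This equals $(q-1)(q-3)\cdots(q-(2\ell-1))$ for $q$ odd and $(q-2)(q-4)\cdots(q-2\ell)$ for $q$ even, which is the second assertion.

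\emph{Step 2: $Z$ with $q$ odd.} Since $(Z:L) = 2$ is coprime to $q$, the inclusion $L \hookrightarrow Z$ induces a bijection $L/qL \to Z/qZ$. This bijection is compatible with the pairings $(\beta,\cdot) \bmod q$. Therefore $\chi^\quasi_{\PHI}(q) = \chi^\quasi_{\PHI,L}(q)$. Equivalently, in the coset method $b(q)=1$.

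\emph{Step 3: $Z$ with $q$ even.} Here $\mathbb{Z}_q^\ell/\im\pi$ has order $b(q) = 2$. The coset representatives are $0$ and the class of $\varpivee_\ell = \frac12(e_1+\cdots+e_\ell)$. The trivial coset contributes $f_{i_0}(q) = \chi^\quasi_{\PHI,L}(q) = (q-2)\cdots(q-2\ell)$ by Step 1.

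For the nontrivial coset, write $x = y + \tfrac12(1,\ldots,1)$ with $y \in \mathbb{Z}^\ell$ taken mod $q$, and set $z_i = 2y_i + 1$. The conditions become $y_i - y_j \not\equiv 0$ and $y_i + y_j + 1 \not\equiv 0 \pmod q$. The condition $2y_k + 1 \not\equiv 0$ holds automatically, since $q$ is even. In terms of the $z_i$, these say $z_i \not\equiv \pm z_j \pmod{2q}$, where the $z_i$ range over the $q$ odd residues mod $2q$.

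No odd residue $z$ satisfies $z \equiv -z \pmod{2q}$. Indeed, that congruence forces $z \equiv 0$ or $z \equiv q$, and both are even. So the odd residues form $q/2$ classes $\{z,-z\}$. The same sequential count gives $f_2(q) = q(q-2)\cdots(q-2\ell+2)$.

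Averaging the two coset counts gives
\begin{align}
	\tfrac12\bigl(f_{i_0}(q)+f_2(q)\bigr) = (q-2)\cdots(q-2(\ell-1))\cdot\tfrac{(q-2\ell)+q}{2} = (q-2)\cdots(q-2(\ell-1))(q-\ell),
\end{align}
which is the first assertion.

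The main obstacle is Step 3. One must set up the coset decomposition correctly, keeping track of the reduction modulus for the half-integral coset, where passing to $z_i$ changes the modulus from $q$ to $2q$. One must also check that the odd residues carry a free involution $z \mapsto -z$. The rest is the elementary pairing count.
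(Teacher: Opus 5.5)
Your proof is correct, including the two delicate points you flag. For even $q$ you have $b(q)=2$ with coset representatives $0$ and $\varpivee_\ell$, and $z\mapsto -z$ has no fixed points on the odd residues modulo $2q$.

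The paper gives no argument of its own here. The first formula is quoted from \cite[Theorem 4.8]{KamiyaTakemuraTeraoRoot}, and the second is identified with $\chi_{T(C_\ell)}$ from \cite[Proposition 4.6, Theorem 4.7]{KamiyaTakemuraTeraoRoot}.

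Your route is to count on the standard lattice $L$ and then pass to $Z$ by the coset method. This is the same mechanism the paper uses in the proof of \cref{prop2.9} for $2\PHI(B_\ell)$: the same matrix $P$, the same $b(q)=\gcd\{2,q\}$, and the same representative $(0,\ldots,0,1)$. So your argument makes the proposition self-contained, at the cost of a page instead of a citation.

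The one new ingredient is your substitution $z_i = 2y_i+1$ into the odd residues modulo $2q$. It reduces the shifted coset to the same free-involution pairing count as Step 1, so $f_2(q)=q(q-2)\cdots(q-2(\ell-1))$ comes out in one line. For the analogous shifted coset in \cref{prop2.9}, the paper instead runs the recursion of \cref{lem4.22}. A similar reparametrization would handle that case directly too, with one self-paired class when $q \equiv 2 \pmod 4$.
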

\begin{proof}
	The function $\chi_{\PHI,L}^\quasi$ is exactly equal to $\chi_{T(C_\ell)}$ in \cite[Proposition 4.6, Theorem 4.7]{KamiyaTakemuraTeraoRoot}
\end{proof}

Let $\PHI(B_\ell)$ be a root system of type $B_\ell$.
The dilated root system $2\PHI(B_\ell)$ can be regarded as the subset of $Z^\vee$.
Thus we can consider the characteristic quasi-polynomial $\chi_{2\PHI(B_\ell),\,Z}^\quasi$.

\begin{proposition}\label{prop2.9}
	Let $\PHI(B_\ell)$ be a root system of type $B_\ell$, and let $Z$ be the coweight lattice of a root system of type $C_\ell$.
	Then
	\begin{align}
		\chi_{2\PHI(B_\ell),\,Z}^\quasi(q) = 
		\begin{cases*}
			(q-1)(q-3) \cdots (q-(2\ell-1))					& \textup{if $\gcd\{4,q\} = 1$;}\\
			(q-2)(q-6) \cdots (q-2(2\ell-3))(q-(3\ell-2))	& \textup{if $\gcd\{4,q\} = 2$;}\\
			(q-4)(q-8) \cdots (q-4(\ell-1))(q-\ell)			& \textup{if $\gcd\{4,q\} = 4$.}
		\end{cases*}
	\end{align}
\end{proposition}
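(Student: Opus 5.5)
The plan is to count directly, after a change of variables that turns $Z$ into a lattice of integer vectors. The dilation result \cref{dilated} does not apply as it stands, because $\PHI(B_\ell)$ itself is not integral on $Z$. Indeed $(e_k,\tfrac12(e_1+\cdots+e_\ell)) = \tfrac12$, so only $2\PHI(B_\ell)$ lies in $Z^\vee$.

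\textbf{Change of variables.} By the description of $\DELTA^\vee$ for type $C_\ell$, we have $Z = L \sqcup (L + \tfrac12(e_1+\cdots+e_\ell))$ with $L = \bigoplus \mathbb{Z}e_i$. The map $x \mapsto u = 2x$ identifies $Z/qZ$ with $L'/qL'$, where
\begin{align}
	L' \ceq \bigset{u \in \mathbb{Z}^\ell}{u_1 \equiv \cdots \equiv u_\ell \pmod 2}.
\end{align}
Under this map the defining conditions of $M(2\PHI(B_\ell),Z;q)$ become $u_i \pm u_j \not\equiv 0$ and $u_k \not\equiv 0 \pmod q$. So I count $u \in L'/qL'$ avoiding the $B_\ell$-type conditions.

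\textbf{Case $q$ odd.} Since $(\mathbb{Z}^\ell : L') = 2$ is coprime to $q$, the inclusion induces an isomorphism $L'/qL' \cong \mathbb{Z}^\ell/q\mathbb{Z}^\ell$. The count is therefore $\chi_{\PHI(B_\ell)}^\quasi(q)$, which \cref{typeBroot} gives as $(q-1)(q-3)\cdots(q-(2\ell-1))$.

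\textbf{Case $q$ even, reduction to two counts.} Write $q = 2m$. We have $2q\mathbb{Z}^\ell \subset qL'$ with index $2$, and the conditions depend only on $u \bmod q$. Since $q$ is even, the parity condition defining $L'$ also depends only on $u \bmod q$. Each class mod $q$ has $2^\ell$ lifts mod $2q$, so
\begin{align}
	\#M = 2^{\ell-1}\cdot \#\bigset{u \in (\mathbb{Z}/q\mathbb{Z})^\ell}{\text{all $u_i$ of equal parity},\ u_i\pm u_j \neq 0,\ u_k \neq 0}.
\end{align}
I split this count by the common parity.
\begin{itemize}
	\item \emph{All $u_i$ even.} Put $u = 2v$ with $v \in (\mathbb{Z}/m\mathbb{Z})^\ell$. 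The conditions become the $B_\ell$ conditions mod $m$, so this part contributes $\chi^\quasi_{\PHI(B_\ell)}(m)$, known from \cref{typeBroot}.
	\item \emph{All $u_i$ odd.} The condition $u_k \neq 0$ holds automatically. Put $u = 2v+1$ with $v \in (\mathbb{Z}/m\mathbb{Z})^\ell$. The conditions become $v_i \not\equiv v_j$ and $v_i + v_j + 1 \not\equiv 0 \pmod m$.
\end{itemize}

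\textbf{The odd-parity count.} This is the step I expect to be the main obstacle, since it depends on the parity of $m$.
\begin{itemize}
	\item \emph{$m$ even ($4 \mid q$).} The involution $v \mapsto -1-v$ of $\mathbb{Z}/m\mathbb{Z}$ is fixed-point free, giving $m/2$ orbits. The conditions say the $v_i$ lie in pairwise distinct orbits, each with a choice of representative. The count is $m(m-2)\cdots(m-2\ell+2)$. Adding $\chi^\quasi_{B_\ell}(m) = (m-2)\cdots(m-2(\ell-1))(m-\ell)$ gives $(m-2)\cdots(m-2\ell+2)(2m-\ell)$. Multiplying by $2^{\ell-1}$, absorbed as a factor $2$ into each of the $\ell-1$ factors $m-2k$, yields $(q-4)(q-8)\cdots(q-4(\ell-1))(q-\ell)$, as claimed.
	\item \emph{$m$ odd ($q \equiv 2 \pmod 4$).} Now $2$ is invertible mod $m$, so $w = 2v+1$ is a bijection of $\mathbb{Z}/m\mathbb{Z}$. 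The conditions become $w_i \neq \pm w_j$ with $w_k$ unrestricted, i.e.\ the $D_\ell$ count on $\mathbb{Z}^\ell$. Counting by whether one coordinate is $0$ gives
	\begin{align}
		(m-1)(m-3)\cdots(m-2\ell+1) + \ell(m-1)(m-3)\cdots(m-2\ell+3).
	\end{align}
	Adding $\chi^\quasi_{B_\ell}(m) = (m-1)\cdots(m-2\ell+1)$ and factoring out $(m-1)\cdots(m-2\ell+3)$ leaves $2m - 4\ell + 2 + \ell$. Rescaling by $2^{\ell-1}$ then gives $(q-2)(q-6)\cdots(q-2(2\ell-3))(q-(3\ell-2))$.
\end{itemize}
I will check both algebraic simplifications, and spot-check small cases such as $\ell = 2$ by hand.
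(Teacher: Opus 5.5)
Your argument is correct, and it differs from the paper's in the one step that carries real content.

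\textbf{The paper's route.} It applies the coset method (\cref{cosetmethod}) to $L=\bigoplus_i\mathbb{Z}e_i\subset Z$. For even $q$ it writes $\chi^\quasi_{2\PHI(B_\ell),Z}(q)=\tfrac12\bigl(f_1(q)+f_2(q)\bigr)$. It obtains $f_1=\chi^\quasi_{2\PHI(B_\ell),L}$ from the dilation theorem \cref{dilated}. It computes the shifted-coset term $f_2=\#X_q^\ell$ by a recursion in $(m,q)$ with the auxiliary quantities $Y(k,q)$ (\cref{lem4.22}).

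\textbf{How your pieces match.} Your substitution $u=2x$ encodes the same two-coset decomposition as a parity split in $(\mathbb{Z}/q\mathbb{Z})^\ell$. The pieces coincide: $f_1=2^\ell\chi^\quasi_{\PHI(B_\ell)}(q/2)$ is your even part, and $f_2$ is $2^\ell$ times your odd count. So your factor $2^{\ell-1}$ is exactly the paper's averaging.

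\textbf{Where you depart.} The difference is the odd coset, where you replace the recursion by a direct count.
\begin{itemize}
\item When $4\mid q$, the conditions say the coordinates lie in distinct orbits of the fixed-point-free involution $v\mapsto -1-v$.
\item When $q\equiv 2\pmod 4$, they become exactly the $D_\ell$ conditions on $\mathbb{Z}^\ell$ modulo the odd number $m$. Your count agrees with the odd-$q$ case of the second formula in \cref{typeDroot}.
\end{itemize}
This gives the closed forms immediately. It also explains the special residues $\tfrac{q-2}{4}$ and $\tfrac{3q-2}{4}$ in the paper's recursion: they are precisely the values giving $w=0$ in your $D_\ell$ picture, i.e.\ the unique short orbit.

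\textbf{What each buys.} Your version is shorter and self-contained. The paper's stays inside the general coset-method framework and reuses \cref{dilated}, which it needs again in \cref{sec5}.
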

\begin{proof}
	Let $L$ be the lattice generated by $\{e_1,\ldots,e_\ell\}$.
	We compute $\chi_{2\PHI(B_\ell),\,Z}^\quasi$ by applying the coset method to $\chi_{2\PHI(B_\ell),\,L}^\quasi$.
	Since $L$ is isomorphic to the coweight lattice of a root system of type $B_\ell$, \cref{dilated} implies that
	\begin{align}
		\chi_{2\PHI(B_\ell),\, L}^\quasi(q) = \begin{cases*}
			(q-1)(q-3) \cdots (q-(2\ell-1)) 			& \textup{if $\gcd\{4,q\} = 1$;}\\
			(q-2)(q-6) \cdots (q-2(2\ell-1)) 			& \textup{if $\gcd\{4,q\} = 2$;}\\
			(q-4)(q-8) \cdots (q-4(\ell-1))(q-2\ell) 	& \textup{if $\gcd\{4,q\} = 4$.}
		\end{cases*} \label{exampleofdilated}
	\end{align}
	Let $S_Z$ and $S_L$ denote the coefficient matrices of $2\PHI(B_\ell)^+$ with respect to $Z$ and $L$, respectively.
	For example, when $\ell = 2$, we have 
	\begin{align}
		2\PHI(B_2)^+ = \{2\alpha_1,\ \alpha_2,\ 2\alpha_1 + \alpha_2,\ 2\alpha_1 + 2\alpha_2\} = \{2e_1 - 2e_2,\ 2e_2,\ 2e_1,\ 2e_1 + 2e_2\},
	\end{align}
	and 
	\begin{align}
		S_Z = \begin{pmatrix}
			2 & 0 & 2 & 2\\
			0 & 1 & 1 & 2
		\end{pmatrix},\qquad
		S_L = \begin{pmatrix}
			2	& 0 & 2 & 2\\
			-2	& 2 & 0 & 2
		\end{pmatrix}.
	\end{align}
	The transformation matrix $P$ is given as
	\begin{align}
		P = \left(\begin{array}{rrrrrr}
			1 \\
			-1 & 1 \\
			& -1 & \ddots\\
			& & \ddots & 1\\
			&&& -1 & 1\\
			&&&& -1 & 2
		\end{array}\right),
	\end{align}
	that is, $S_L = PS_Z$.
	Let $q \in \mathbb{Z}_{>0}$.
	For a homomorphism $\pi : \mathbb{Z}_q^\ell \lra \mathbb{Z}_q^\ell$ defined by $z \lmapsto zP$, we have $b(q) = (\mathbb{Z}_q^\ell:\im\pi) = \gcd\{2,q\}$.
	Hence, if $q$ is odd, then 
	\begin{align}
		\chi_{2\PHI(B_\ell),\,Z}^\quasi(q) = \chi_{2\PHI(B_\ell),\, L}^\quasi(q) = (q-1)(q-3) \cdots (q-(2\ell-1)).
	\end{align}
	
	Suppose that $q$ is even.
	Let $g_1 \ceq (0,\ldots,0)$, $g_2 \ceq (0,\ldots,0,1) \in \mathbb{Z}_q^\ell$.
	Then $\{g_1,g_2\}$ is a complete set of coset representatives of $\mathbb{Z}_q^\ell/\im\pi$.
	For $i \in \{1,2\}$, let
	\begin{align}
		f_i(q) \ceq \#\bigset{z \in \mathbb{Z}_q^\ell}{\text{$zS_{\!L} + g_iS_{\!Z}$ has no zero components}}.
	\end{align}
	Then $f_1(q) = \chi_{2\PHI(B_\ell),\, L}^\quasi(q)$, and
	\begin{align}
		f_2(q) 
		&= \#\bigset{(z_1,\ldots,z_\ell) \in \mathbb{Z}_q^\ell}{2z_i - 2z_j \not\equiv 0,\ \ 2z_i + 2z_j + 2 \not\equiv 0,\ \ 2z_i + 1 \not\equiv 0 \pmod{q}}\\
		&= \#\Bigset{(z_1,\ldots,z_\ell) \in \mathbb{Z}_q^\ell}{\begin{rgathered}
				z_i - z_j \not\equiv 0,\ \ z_i - z_j \not\equiv \tfrac{q}{2},\ \ z_i + z_j + 1 \not\equiv 0,\ \ z_i + z_j + 1 \not\equiv \tfrac{q}{2} \pmod{q}\\ \tforall i,j \in \{1,\ldots,\ell\},\ i \neq j
			\end{rgathered}
			}.
	\end{align}
	For $m \in \mathbb{Z}_{>0}$, define a set
	\begin{align}
		X_q^m \ceq \Bigset{(z_1,\ldots,z_m) \in \mathbb{Z}_q^m}{\begin{rgathered}
				z_i - z_j \not\equiv 0,\ \ z_i - z_j \not\equiv \tfrac{q}{2},\ \ z_i + z_j + 1 \not\equiv 0,\ \ z_i + z_j + 1 \not\equiv \tfrac{q}{2} \pmod{q}\\ \tforall i,j \in \{1,\ldots,m\},\ i \neq j
		\end{rgathered}}.
	\end{align}
	For $a \in \mathbb{Z}_q$, let $X_q^m(a) \ceq \bigset{(z_1,\ldots,z_m) \in X_q^m}{z_1 = a}$.
	Then, for $(z_1,\ldots,z_m) \in X_q^m(a)$ and $i \in \{2,\ldots,m\}$, the value $z_i$ satisfies
	\begin{align}
		z_i \not\equiv a,\quad
		z_i \not\equiv \frac{q}{2} - 1 - a,\quad 
		z_i \not\equiv \frac{q}{2} + a,\quad 
		z_i \not\equiv q - 1 - a \pmod{q}.
	\end{align}
	Note that, if $a \equiv \frac{q-2}{4}$ or $a \equiv \frac{3q - 2}{4}$, then
	\begin{align}
		a \equiv \frac{q}{2} - 1 - a \AND \frac{q}{2} + a \equiv q-1-a.
	\end{align}
	Hence we can see that 
	\begin{align}
		\#X_q^m &= \sum_{a \in \mathbb{Z}_q^m} \#X_q^m(a)\\
		&= \sum_{a \in \mathbb{Z}_q^m} \#\Bigset{(z_1,\ldots,z_{m-1}) \in \mathbb{Z}_q^{m-1}}{
			\begin{multlined}[b]
				z_i \not\equiv a,\ \ z_i \not\equiv \tfrac{q}{2} - 1 - a,\ \ z_i \not\equiv \tfrac{q}{2} + a,\ \ z_i \not\equiv q-1-a\\
				z_i - z_j \not\equiv 0,\ \ z_i - z_j \not\equiv \tfrac{q}{2},\ \ z_i + z_j + 1 \not\equiv 0,\ \ z_i + z_j + 1 \not\equiv \tfrac{q}{2} \pmod{q}\\
				\tforall i,j \in \{1,\ldots,m-1\},\ i \neq j\vspace{-6.5mm}
			\end{multlined}
		}\\
		&= \begin{cases*}
			(q-2) \cdot \#X_{q-4}^{m-1} + 2 \cdot \#X_{q-2}^{m-1} 	& if $\gcd\{4.q\} = 2$;\\
			q \cdot \#X_{q-4}^{m-1} 								& if $\gcd\{4,q\} = 4$
		\end{cases*}
	\end{align}
	for all $m,q \in \mathbb{Z}_{>0}$.
	
	\begin{lemma}\label{lem4.22}
		For $m,q \in \mathbb{Z}_{>0}$, we have
		\begin{align}
			\#X_q^m = \begin{cases*}
				(q-2)(q-6) \cdots (q-2(2m-3))(q-2(m-1)) 	& \textup{if $\gcd\{4.q\} = 2$;}\\
				q(q-4) \cdots (q-4(m-1))				& \textup{if $\gcd\{4,q\} = 4$.}
			\end{cases*}
		\end{align}
	\end{lemma}
	\begin{proof}[Proof of \cref{lem4.22}]
		It is clear that $\#X_q^1 = q$.
		Let $m > 1$.
		Suppose that $\gcd\{4,q\} = 4$.
		By induction on $m$, we have
		\begin{align}
			\#X_q^m = q \cdot \#X_{q-4}^{m-1} = q(q-4) \cdots (q-4(m-1)).
		\end{align}
		Suppose that $\gcd\{4,q\} = 2$.
		By definition, we have $\#X_2^m = 0$.
		Therefore we can assume that $q \geq 6$.
		Then $\#X_{q-2}^{m-1} = (q-2) \cdot \#X_{q-6}^{m-2}$ since $\gcd\{4,q-2\} = 4$.
		Thus, we have
		\begin{align}
			\#X_q^m &= (q-2) \cdot \#X_{q-4}^{m-1} + 2(q-2) \cdot \#X_{q-6}^{m-2}\\
			&= (q-2)\Bigl( (q-6) \cdot \#X_{q-8}^{m-2} + 2 \cdot \#X_{q-6}^{m-2} \Bigr) + 2(q-2) \cdot \#X_{q-6}^{m-2}\\
			&= (q-2)\Bigl((q-6) \cdot \#X_{q-8}^{m-2} + 4 \cdot \#X_{q-6}^{m-2}\Bigr).
		\end{align}
		For $k \in \{1,\ldots,m-1\}$, define
		\begin{align}
			Y(k,q) \ceq (q-2) \cdot \#X_{q-4}^{m-k} + 2k \cdot \#X_{q-2}^{m-k}.
		\end{align}
		They satisfy
		\begin{align}
			Y(k,q) = \begin{cases*}
				(q-2) \cdot Y(k+1,\,q-4) 	& if $k < m-1$;\\
				(q-2)(q+2m-6)				& if $k = m-1$.
			\end{cases*}
		\end{align}
		Hence we have 
		\begin{align}
			\#X_q^m = Y(1,q) = (q-2)(q-6) \cdots (q-2(2m-3))(q-2(m-1)). \qedend
		\end{align}
	\end{proof}
	By \cref{lem4.22}, 
	\begin{align}
		f_2(q) = \#X_q^\ell = \begin{cases*}
			(q-2)(q-6) \cdots (q-2(2\ell-3))(q-2(\ell-1)) 	& \textup{if $\gcd\{4.q\} = 2$;}\\
			q(q-4) \cdots (q-4(\ell-1))						& \textup{if $\gcd\{4,q\} = 4$.}
		\end{cases*}
	\end{align}
	Therefore, by the coset method (\cref{cosetmethod}), we have
	\begin{align}
		\chi_{2\PHI(B_\ell),\,Z}^\quasi(q) &= \frac{f_1(q) + f_2(q)}{2}\\
		&= \begin{cases*}
			(q-2)(q-6) \cdots (q-2(2\ell-3))(q-(3\ell-2))	& \textup{if $\gcd\{4,q\} = 2$;}\\
			(q-4)(q-8) \cdots (q-4(\ell-1))(q-\ell)			& \textup{if $\gcd\{4,q\} = 4$.}
		\end{cases*}
	\end{align}
	Now, the proof of \cref{prop2.9} is complete.
\end{proof}

\subsubsection{type $BC_\ell$}\ 

\noindent
Let $\{e_1,\ldots,e_\ell\}$ be a standard basis for $E = \mathbb{R}^\ell$.
Then
\begin{align}
	\PHI = \bigset{\pm(e_i-e_j),\ \pm(e_i+e_j),\ \pm e_k,\ \pm 2e_k}{i,j,k \in \{1,\ldots,\ell\},\ i < j}
\end{align}
is a root system of type $BC_\ell$.
The roots $\DELTA = \{e_1-e_2, \, \ldots, \, e_{\ell-1}-e_\ell, \, e_\ell\}$ is a basis of $\PHI$.
Then
\begin{align}
	\DELTA^\vee = \bigset{e_1 + \cdots + e_i}{i \in \{1,\ldots,\ell\}}.
\end{align}
Hence the coweight lattice $Z$ is isomorphic to the lattice $L$ generated by $\{e_1,\ldots,e_\ell\}$.

\begin{proposition}[type $BC_\ell$]\label{typeBCroot}
	Let $\PHI$ be a root system of type $BC_\ell$.
	Then
	\begin{align}
		\chi_{\PHI}^\quasi(q) = \begin{cases*}
			(q-1)(q-3) \cdots (q-(2\ell-1)) 		& \textup{if $q \not\in 2\mathbb{Z}$;}\\
			(q-2)(q-4) \cdots (q-2(\ell-1))(q-2\ell) & \textup{if $q \in 2\mathbb{Z}$}.
		\end{cases*}
	\end{align}
\end{proposition}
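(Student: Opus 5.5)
We want $\chi_\PHI^\quasi(q)=\#\{z\in L/qL : (\beta,z)\not\equiv 0 \bmod q \ \forall \beta\in\PHI^+\}$ for $\PHI$ of type $BC_\ell$, where $Z=L=\bigoplus\mathbb{Z}e_i$. The plan is to compare with type $C_\ell$ on the same lattice $L$ and invoke the second part of \cref{typeCroot}. The positive roots of $BC_\ell$ are $\PHI(C_\ell)^+\sqcup\{e_k\}$. In coordinates $z=(z_1,\ldots,z_\ell)\in\mathbb{Z}_q^\ell$, the conditions are
\begin{align}
	z_i\pm z_j\not\equiv 0\ (i<j),\qquad z_k\not\equiv 0,\qquad 2z_k\not\equiv 0 \pmod q .
\end{align}
Since $z_k\equiv 0$ implies $2z_k\equiv 0$, the condition $z_k\not\equiv 0$ is redundant given $2z_k\not\equiv 0$. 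Hence $M(\PHI,L;q)=M(\PHI(C_\ell),L;q)$, and so $\chi_{\PHI}^\quasi=\chi_{\PHI(C_\ell),L}^\quasi$.

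By \cref{typeCroot}, the latter equals $(q-1)(q-3)\cdots(q-(2\ell-1))$ for odd $q$ and $(q-2)(q-4)\cdots(q-2\ell)$ for even $q$, which is the claim. For $\ell\ge 3$ this follows directly from \cref{typeCroot} as stated.

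The small ranks need a separate check, since \cref{typeCroot} is stated only for $\ell\ge3$.
\begin{itemize}
	\item For $\ell=1$, we count $z\in\mathbb{Z}_q$ with $2z\not\equiv0$. This gives $q-1$ if $q$ is odd and $q-2$ if $q$ is even, as claimed.
	\item For $\ell=2$, the direct argument is the same: $z_1\not\equiv\pm z_2$, and each $z_i$ avoids the solutions of $2z\equiv0$. For even $q$ there are two such solutions, $0$ and $q/2$. Choosing $z_1$ gives $q-2$ options. Then $z_2$ must avoid $0$, $q/2$, $z_1$ and $-z_1$, and these four values are distinct because $z_1\ne -z_1$ when $2z_1\not\equiv 0$. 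This gives $(q-2)(q-4)$. For odd $q$ the count is $(q-1)(q-3)$ in the same way.
\end{itemize}
Alternatively, the general product formula can be obtained uniformly by this sequential count, since the forbidden values for each new coordinate are always distinct.

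The main obstacle is only the bookkeeping of which lattice the cited $C_\ell$ formula refers to. The point is that the coweight lattice of $BC_\ell$ is $L$ rather than the $C_\ell$ coweight lattice $Z(C_\ell)\supsetneq L$, so one must use $\chi_{\PHI(C_\ell),L}^\quasi$ and not $\chi_{\PHI(C_\ell)}^\quasi$. This is exactly why the even constituent ends in $(q-2\ell)$ rather than $(q-\ell)$.
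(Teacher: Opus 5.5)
Your proposal is correct and follows the paper's proof. The paper likewise reduces to $\chi_{\PHI(C_\ell),L}^\quasi$ and reads the result off the second formula of \cref{typeCroot}. Your observation that $z_k\equiv 0$ implies $2z_k\equiv 0$ makes explicit the equality of complements that the paper only asserts, and your sequential count is a valid self-contained check. That count (forbidden values $\pm z_1,\ldots,\pm z_{k-1}$ together with the $\gcd\{2,q\}$ solutions of $2z\equiv0$, all distinct) in fact proves the formula for every $\ell\geq 1$, not just the small ranks.
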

\begin{proof}
	It follows from \cref{typeCroot} since $\chi_{\PHI}^\quasi = \chi_{\PHI(C_\ell),\, L}^\quasi$.	
\end{proof}

\subsubsection{type $D_\ell$}\ 

\noindent
Let $\{e_1,\ldots,e_\ell\}$ be a standard basis for $E = \mathbb{R}^\ell$.
Then
\begin{align}
	\PHI = \bigset{\pm(e_i-e_j),\ \pm(e_i+e_j)}{i,j \in \{1,\ldots,\ell\},\ i < j}
\end{align}
is a root system of type $D_\ell$.
The roots $\DELTA = \{e_1-e_2, \, \ldots, \, e_{\ell-1}-e_\ell, \, e_{\ell-1} + e_\ell\}$ is a basis of $\PHI$.
Then
\begin{align}
	\DELTA^\vee = \bigset{e_1 + \cdots + e_i}{i \in \{1,\ldots,\ell-2\}} \sqcup \left\{ \frac{e_1 + \cdots + e_{\ell-1} - e_\ell}{2},\ \frac{e_1 + \cdots + e_\ell}{2} \right\}.
\end{align}

\begin{proposition}[type $D_\ell$, {\cite[Theorem 4.8]{KamiyaTakemuraTeraoRoot}}]\label{typeDroot}
	Let $\PHI$ be a root system of type $D_\ell$.
	Then
	\begin{align}
		\chi_\PHI^\quasi(q) = \begin{cases*}
			(q-1)(q-3) \cdots (q-(2\ell-3))(q-(\ell-1))  		& \textup{if $q \not\in 2\mathbb{Z}$;}\\
			(q-2)(q-4) \cdots (q-(2\ell-4))(q^2 - 2(\ell-1)q + \frac{\ell(\ell-1)}{2}) & \textup{if $q \in 2\mathbb{Z}$}.
		\end{cases*}
	\end{align}
	Let $L$ be a lattice generated by $\{e_1,\ldots,e_\ell\}$.
	Then
	\begin{align}
		\chi_{\PHI,L}^\quasi(q) = \begin{cases*}
			(q-1)(q-3) \cdots (q-(2\ell-3))(q-(\ell-1)) 		& \textup{if $q \not\in 2\mathbb{Z}$;}\\
			(q-2)(q-4) \cdots (q-(2\ell-4))(q^2 - 2(\ell-1)q + \ell(\ell-1)) & \textup{if $q \in 2\mathbb{Z}$}.
		\end{cases*}
	\end{align}
\end{proposition}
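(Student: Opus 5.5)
I would verify the second formula, for $\chi_{\PHI,L}^\quasi$, by a direct count. The first formula (for the coweight lattice $Z$) is \cite[Theorem 4.8]{KamiyaTakemuraTeraoRoot} and I would simply cite it. Since $L=\bigoplus\mathbb{Z}e_i$ and $\PHI^+=\{e_i\pm e_j\}_{i<j}$, the definition in \cref{sec2.2.3} gives
\begin{align}
	\chi_{\PHI,L}^\quasi(q)=\#\bigset{(z_1,\ldots,z_\ell)\in\mathbb{Z}_q^\ell}{z_i\not\equiv z_j,\ z_i\not\equiv -z_j \pmod q \tforall i\neq j}.
\end{align}
So the condition is that the $z_i$ lie in pairwise distinct classes of $\mathbb{Z}_q$ modulo the involution $a\mapsto -a$. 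There is no condition on a single coordinate, because $\pm e_k$ is not a root of $D_\ell$. The count therefore reduces to choosing distinct orbits, counted with their sizes, for the $\ell$ ordered coordinates.

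The orbits of $a\mapsto -a$ on $\mathbb{Z}_q$ are the fixed points together with the two-element orbits $\{a,-a\}$. The fixed points are $\{0\}$ if $q$ is odd and $\{0,q/2\}$ if $q$ is even. There are $(q-1)/2$ or $(q-2)/2$ two-element orbits, respectively. I would split the count according to how many coordinates lie in fixed-point orbits. Each coordinate placed in a two-element orbit contributes an ordered choice of orbit times a factor $2$.

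For $q$ odd, the number of fixed-point coordinates is $k\in\{0,1\}$, giving
\begin{align}
	(q-1)(q-3)\cdots(q-(2\ell-1)) + \ell\,(q-1)(q-3)\cdots(q-(2\ell-3)).
\end{align}
Factoring out $(q-1)\cdots(q-(2\ell-3))$ leaves $(q-2\ell+1)+\ell=q-(\ell-1)$. This matches the stated odd constituent.

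For $q$ even, $k\in\{0,1,2\}$ and the sum is
\begin{align}
	(q-2)\cdots(q-2\ell) + 2\ell\,(q-2)\cdots(q-2(\ell-1)) + \ell(\ell-1)\,(q-2)\cdots(q-2(\ell-2)).
\end{align}
Factoring out $(q-2)\cdots(q-(2\ell-4))$ leaves $(q-2\ell+2)(q-2\ell)+2\ell(q-2\ell+2)+\ell(\ell-1)$, which simplifies to $q^2-2(\ell-1)q+\ell(\ell-1)$, as claimed.

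The only real care needed is the bookkeeping of the cases $k=1,2$. In those cases the chosen coordinates are placed at distinct fixed points in $\binom{\ell}{k}k!$ ways, and the remaining $\ell-k$ coordinates go into two-element orbits. The final polynomial identity is a routine expansion.

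As a consistency check, the odd constituents for $L$ and $Z$ must agree. By the coset method (\cref{cosetmethod}), the transition matrix from $Z$ to $L$ has determinant $(Z:L)=4$, a power of $2$, so $b(q)=1$ for odd $q$.
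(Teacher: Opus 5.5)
Your proposal is correct, and it takes a more self-contained route than the paper. The paper's proof is purely a citation. It attributes both formulas to \cite[Theorem 4.8]{KamiyaTakemuraTeraoRoot}, and for the $L$-formula it only identifies $\chi_{\PHI,L}^\quasi$ with the function $\chi_{T(D_\ell)}$ of \cite[Proposition 4.3, Theorem 4.5]{KamiyaTakemuraTeraoRoot}. You instead rederive the $L$-formula by counting ordered choices of distinct orbits of $a\mapsto -a$ on $\mathbb{Z}_q$, and both your odd and even expansions give exactly the stated constituents. The paper's route buys brevity by leaning on the literature; yours makes the $L$-formula elementary and checkable in a few lines.

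You could also make the $Z$-formula self-contained, exactly as the paper does in the proof of \cref{prop2.9}. From the list of $\DELTA^\vee$ we have $Z=L\sqcup\bigl(L+\tfrac12(e_1+\cdots+e_\ell)\bigr)$. Hence for even $q$ the coset method gives $\chi_\PHI^\quasi(q)=\tfrac12\bigl(\chi_{\PHI,L}^\quasi(q)+f_2(q)\bigr)$, where $f_2(q)$ is the number of $y\in\mathbb{Z}_q^\ell$ with $y_i\not\equiv y_j$ and $y_i+y_j+1\not\equiv 0$ for all $i\neq j$. The involution $y\mapsto -1-y$ has no fixed points when $q$ is even, so $f_2(q)=q(q-2)\cdots(q-2(\ell-1))$. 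After factoring out $(q-2)\cdots(q-(2\ell-4))$, averaging the quadratic factors $q^2-2(\ell-1)q+\ell(\ell-1)$ and $q^2-2(\ell-1)q$ gives the stated $q^2-2(\ell-1)q+\tfrac{\ell(\ell-1)}{2}$.

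There are two small slips, neither of which affects the result. First, $(Z:L)=2$, not $4$. The lattice $Z$ is generated by $L$ together with $\tfrac12(e_1+\cdots+e_\ell)$. The number $4$ is the index of connection $f=(Z:\veeQ)$, and $(L:\veeQ)=2$. Your conclusion $b(q)=1$ for odd $q$ still holds. Second, $\binom{\ell}{k}k!$ counts the placements at fixed points only when $k$ equals the number $F$ of fixed points. In general the count is $\binom{\ell}{k}F(F-1)\cdots(F-k+1)$, so for even $q$ and $k=1$ it is $2\ell$, which is what your displayed sum correctly uses.
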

\begin{proof}
	The function $\chi_{\PHI,L}^\quasi$ is exactly equal to $\chi_{T(D_\ell)}$ in \cite[Proposition 4.3, Theorem 4.5]{KamiyaTakemuraTeraoRoot}
\end{proof}

\subsubsection{type $E_6$}\ 

\noindent
Let $\{e_1,\ldots,e_8\}$ be a standard basis for $\mathbb{R}^8$, and define
\begin{align}
	E \ceq \bigset{(x_1,\ldots,x_8) \in \mathbb{R}^8}{x_6  = x_7 = -x_8}.
\end{align}
Then
\begin{align}
	\PHI = \bigset{\pm(e_i-e_j),\ \pm(e_i+e_j)}{i,j \in \{1,\ldots,5\},\ i > j} \sqcup \Bigset{\pm\frac{\nu_1e_1 + \cdots + \nu_5e_5 - e_6 -e_7 + e_8}{2}}{
		\begin{lgathered}
			\nu_1,\ldots,\nu_5 \in \{-1,1\},\\ 
			\text{$\nu_1 \cdots \nu_5 = 1$}
		\end{lgathered}
	}
\end{align}
is a root system of type $E_6$.
The roots
\begin{align}
	\DELTA = \left\{\frac{e_1 - e_2 - e_3 - e_4 - e_5 - e_6 - e_7 + e_8}{2}, \ e_1 + e_2,\ e_2 - e_1,\ e_3 - e_2,\ e_4 - e_3,\ e_5 - e_4 \right\}	
\end{align}
is a basis of $\PHI$.

\begin{proposition}[type $E_6$, {\cite[\S6.1]{KamiyaTakemuraTeraoRoot}}]\label{typeE6root}
	Let $\PHI$ be a root system of type $E_6$.
	Then
	\begin{align}
		\chi_\PHI^\quasi(q) = \begin{cases*}
				(q-1)(q-4)(q-5)(q-7)(q-8)(q-11)				& \textup{if $\gcd\{6,q\} = 1$;}\\
				(q-2)(q-4)(q-8)(q-10)(q^2-12q+26)			& \textup{if $\gcd\{6,q\} = 2$;}\\
				(q-3)(q-9)(q^4-24q^3+195q^2-612q+480)		& \textup{if $\gcd\{6,q\} = 3$;}\\
				(q-6)^2(q^4-24q^3+186q^2-504q+480)			& \textup{if $\gcd\{6,q\} = 6$.}
		\end{cases*}
	\end{align}
\end{proposition}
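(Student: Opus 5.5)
The $E_6$ formula is cited from \cite[\S6.1]{KamiyaTakemuraTeraoRoot}, so here is how I would reprove it with the tools above.

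I would work with the elementary-divisor formula of Kamiya--Takemura--Terao recalled after \cref{KamiyaTakemuraTerao}. Write the coefficient matrix $S$ of the $36$ positive roots of $E_6$ in the basis $\DELTA$; this is a $6\times 36$ integer matrix and $Z$ is dual to $\DELTA$. Then
\begin{align}
\chi_\PHI^\quasi(q) = \sum_{J}(-1)^{\#J}\Bigl(\prod_{j=1}^{r(J)}\gcd\{d_{J,j},q\}\Bigr)q^{6-r(J)}.
\end{align}
The nonzero elementary divisors that occur are $1,2,3$ and $6$, since the minimum period is $6$ by \cref{roottable}. Hence the quasi-polynomial is determined by $\gcd\{6,q\}$, which gives the four constituents stated.

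For the first case, $\gcd\{6,q\}=1$, every gcd equals $1$. The constituent is then the characteristic polynomial $\prod(t-e_i)$ with exponents $1,4,5,7,8,11$, by \cref{thm2.2}. For the other constituents I would not sum over all $2^{36}$ subsets directly. Instead I would reorganize the sum by flats of the intersection lattice. For each flat $X$, the contribution of $q^{6-\operatorname{rank}X}$ is the Möbius value $\mu(X)$ times the index-type factor $\#\bigl((\mathbb{Z}_q)$-torsion of $Z/(Z\cap X^{\perp})$-quotient$\bigr)$. More precisely, it is the product of $\gcd\{d_{X,j},q\}$ over the elementary divisors of the root sublattice spanned by the roots in $X^\perp$, taken inside its saturation. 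This product depends only on the $W$-orbit of $X$, equivalently on the type of parabolic root subsystem together with its embedding.

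The plan is then to enumerate the $W(E_6)$-orbits of parabolic subsystems, namely $A_1,2A_1,A_2,\dots,E_6$, with their orbit sizes and Möbius values. These are classical and available from Orlik--Solomon. For each orbit, I would compute the torsion of $Q\cap\operatorname{span}(\PHI_X)$ modulo $\mathbb{Z}\PHI_X$, read against $Z$. Nontrivial torsion appears only for non-saturated subsystems. In the relevant rank-$6$ and rank-$5$ cases these are of type $3A_2$ (torsion $3$), $A_5\times A_1$ (torsion $2$) and similar, and they produce the factors $2$, $3$, $6$.

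Summing orbit by orbit gives each constituent. As checks I would verify monicity, the vanishing $\chi(q)=0$ for $1\le q<12$ from \cref{thm2.2}, and the duality $\chi(q)=\chi(12-q)$. These checks pin down most coefficients. For instance, in the $\gcd=6$ constituent the factor $(q-6)^2$ is forced by duality together with the vanishing at the relevant residues.

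Alternatively, one could use \cref{yoshinagathm}: $\chi=\frac{\#W}{3}\Ell_{A_\circ}$, and count lattice points in $qA_\circ$. These are the solutions of $c_0+c_1+2c_2+2c_3+3c_4+2c_5+c_6=q$ with all $c_i\ge1$, subject to the single congruence defining $Z$ versus the weighted simplex. That is, one counts solutions in $Z$ rather than in $\veeQ$, which, since $f=3$, is the coset decomposition of \cref{cosetmethod}. This reduces to a generating-function computation with $1/\prod(1-x^{n_i})$, and I expect it to be the more reliable route.

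The main obstacle is the bookkeeping: either the orbit/torsion enumeration in the first approach or the partial-fraction extraction of four constituents in the second. I would start with the Ehrhart route, since the weights $(1,1,2,2,3,2,1)$ make the period $6$ transparent.
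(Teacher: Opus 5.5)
\paragraph{Your first route does not work as described.}
In a Coxeter arrangement, each $\PHI_X=\{\beta\in\PHI : \beta|_X=0\}$ is parabolic, i.e.\ $W$-conjugate to the subsystem generated by a subset of $\DELTA$. Hence $\mathbb{Z}\PHI_X$ is a direct summand of $Q=Z^\vee$, and every torsion factor you propose equals $1$.

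Your flat-by-flat sum therefore collapses to $\sum_X\mu(X)q^{\dim X}=\chi_{\A_\PHI}(q)$ for every $q$. This is already wrong at $q=12$: it gives $11\cdot8\cdot7\cdot5\cdot4\cdot1=12320$. But $12A_\circ\cap Z=\{\varpivee_1+\cdots+\varpivee_6\}$, so \cref{yoshinagathm} gives $\chi_\PHI^\quasi(12)=17280$.

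The subsystems $3A_2$ and $A_5\times A_1$ are not of the form $\PHI_X$. They have rank $6$, and the only rank-$6$ flat is $\{0\}$, with $\PHI_{\{0\}}=\PHI$. They do produce the factors $3$ and $2$, but only as \emph{subsets} $J$ of roots whose $\mathbb{Z}$-span has index $3$ or $2$ in $Q$. Such subsets lie over the same flat $\{0\}$ as subsets that span all of $Q$. Since $\prod_j\gcd\{d_{J,j},q\}$ depends on $J$ and not only on its flat, the Kamiya--Takemura--Terao subset sum cannot be regrouped using the M\"obius function of the intersection lattice.

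The same failure is visible in $B_2$. Your recipe returns $q^2-4q+3$ for all $q$ and misses the even constituent $(q-2)^2$. That constituent comes from the pair $\{e_1-e_2,e_1+e_2\}$, whose elementary divisors are $(1,2)$. To repair this route you would have to group by $\mathbb{Z}$-spans, i.e.\ by layers of the toric arrangement in $E/Z$, using the M\"obius function of that poset.

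\paragraph{The Ehrhart route is the right one, but drop the congruence and the coset method.}
Since $Z=\bigoplus_{i=1}^{6}\mathbb{Z}\varpivee_i$, consider the map $x\mapsto(c_0,\ldots,c_6)$ with $c_i=(\alpha_i,x)$ for $i\ge1$ and $c_0=q-(\tilde\alpha,x)$. It is a bijection from $qA_\circ\cap Z$ onto the positive integer solutions of $c_0+c_1+2c_2+2c_3+3c_4+2c_5+c_6=q$, with no further condition. A mod-$3$ condition would instead cut out a coset of $\veeQ$.

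For example, at $q=13$ the three points are $s$, $s+\varpivee_1$, $s+\varpivee_6$, where $s=\varpivee_1+\cdots+\varpivee_6$. They lie in the three distinct classes of $Z/\veeQ\cong\mathbb{Z}/3$, so a congruence keeps only one of them. Yet $\chi_\PHI^\quasi(13)=51840=3\cdot17280$.

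The generating function you name is the unconstrained one. With this correction the route is a complete proof:
\[
\chi_\PHI^\quasi(q)=\frac{\#W}{f}\,[x^q]\,\frac{x^{12}}{(1-x)^3(1-x^2)^3(1-x^3)},\qquad \frac{\#W}{f}=\frac{51840}{3}=17280.
\]

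\paragraph{Comparison with the paper.}
The paper gives no argument for this proposition; it simply quotes \cite[\S6.1]{KamiyaTakemuraTeraoRoot}. Your corrected route is essentially that of \cite{Suter} and \cite{Yoshinaga}. It derives the formula from \cref{yoshinagathm}, which the paper already recalls, and it explains the shape of the answer with little bookkeeping.

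The rational function above has a pole of order $7$ at $x=1$, of order $3$ at $x=-1$, and simple poles at the primitive cube roots of unity $\omega,\omega^2$. So the constituents can differ only by $(-1)^q$ times a quadratic and by a period-$3$ constant.
\begin{itemize}
\item The pole at $-1$ gives the leading term $\frac{17280}{256}(-1)^q q^2=\frac{135}{2}(-1)^q q^2$.
\item The cube roots give $\frac{17280}{81}(\omega^q+\omega^{2q})$, a jump of $640$ when $3\mid q$.
\end{itemize}
This matches the statement. The constituents for $\gcd\{6,q\}=2,3,6$ are the $\gcd=1$ constituent from \cref{thm2.2} plus $135(q^2-12q+32)$, $640$, and $135(q^2-12q+32)+640$, respectively.

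Your duality argument for the factor $(q-6)^2$ is correct.
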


\subsubsection{type $E_7$}\ 

\noindent
Let $\{e_1,\ldots,e_8\}$ be a standard basis for $\mathbb{R}^8$, and define
\begin{align}
	E \ceq \bigset{(x_1,\ldots,x_8) \in \mathbb{R}^8}{x_7 = -x_8}.
\end{align}
Then
\begin{align}
	\PHI &= \bigset{\pm(e_i-e_j),\ \pm(e_i+e_j)}{i,j \in \{1,\ldots,6\},\ i > j} \sqcup \bigl\{ \pm(e_8 - e_7) \bigr\} \\
	&\quad \qquad \qquad \sqcup \Bigset{\pm\frac{\nu_1e_1 + \cdots + \nu_6e_6 - e_7 + e_8}{2}}{
		\begin{lgathered}
			\nu_1,\ldots,\nu_6 \in \{-1,1\},\\ 
			\text{$\nu_1 \cdots \nu_6 = -1$}
		\end{lgathered}
	}
\end{align}
is a root system of type $E_7$.
The roots
\begin{align}
	\DELTA = \left\{\frac{e_1 - e_2 - e_3 - e_4 - e_5 - e_6 - e_7 + e_8}{2}, \ e_1 + e_2,\ e_2 - e_1,\ e_3 - e_2,\ e_4 - e_3,\ e_5 - e_4,\ e_6 - e_5 \right\}	
\end{align}
is a basis of $\PHI$.

\begin{proposition}[type $E_7$, {\cite[\S6.2]{KamiyaTakemuraTeraoRoot}}]\label{typeE7root}
	Let $\PHI$ be a root system of type $E_7$.
	Then
	\begin{align}
		\chi_\PHI^\quasi(q) = \begin{cases*}
			(q-1)(q-5)(q-7)(q-9)(q-11)(q-13)(q-17)						& \textup{if $\gcd\{12,q\} = 1$;}\\
			(q-2)(q-10)(q-13)(q-14)(q^3-24q^2+155q-342)					& \textup{if $\gcd\{12,q\} = 2$;}\\
			(q-3)(q-9)(q-15)(q^4-36q^3+438q^2-2052q+2289)				& \textup{if $\gcd\{12,q\} = 3$;}\\
			(q-4)(q-5)(q-8)(q-16)(q^3-30q^2+263q-504)					& \textup{if $\gcd\{12,q\} = 4$;}\\
			(q-6)(q^6-57q^5+1275q^4-14085q^3+79374q^2-213228q+234360)	& \textup{if $\gcd\{12,q\} = 6$;}\\
			(q-12)(q^6-51q^5+1005q^4-9675q^3+47784q^2-116064q+120960)	& \textup{if $\gcd\{12,q\} = 12$.}
		\end{cases*}
	\end{align}
\end{proposition}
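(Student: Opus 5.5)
The statement to prove is \cref{typeE7root}, the explicit characteristic quasi-polynomial of $E_7$, which is cited from Kamiya--Takemura--Terao. If I had to prove it myself, I would use the elementary divisor formula
\begin{align}
	\chi_\A^\quasi(q) = \sum_{J}(-1)^{\#J}\Bigl(\prod_{j=1}^{r(J)}\gcd\{d_{J,j},q\}\Bigr)q^{\ell-r(J)},
\end{align}
with $S$ the $7\times 63$ coefficient matrix of $\PHI^+$ with respect to the simple roots $\DELTA$. Since $\PHI^+\subset Q$ and $Q$ is dual to $Z$, the columns of $S$ are just the coefficient vectors of the positive roots in $\DELTA$. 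The minimum period is $\tilde n_\PHI=12$, so every elementary divisor $d_{J,j}$ divides $12$. Hence the $r$-th constituent depends only on $\gcd\{12,r\}\in\{1,2,3,4,6,12\}$, and there are six polynomials to determine.

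The direct sum over $2^{63}$ subsets is infeasible, so I would reorganize it by the intersection lattice. For each flat $X$ of $\A_\PHI$, the subsets $J$ with span $X$ all give the same lattice $\mathbb{Z}\langle\beta_J\rangle$ up to finite index. It is cleaner to count directly: for $q$ fixed, $\#M(\PHI;q)=\sum_X \mu(\hat0,X)\cdot\#\{z\in (Z/qZ): \beta(z)\equiv 0\ \forall \beta\in X\}$. The count for each flat equals $q^{\ell-\rk X}\cdot\#\mathrm{Tor}$, which is determined by the saturation index of the sublattice generated by the roots in $X$ inside its saturation in $Q$. Flats of $\A_{E_7}$ correspond to parabolic-type subsystems up to $W$-conjugacy, and the torsion of $Q/\langle \PHI_X\rangle$ is nontrivial exactly for the non-parabolic (Borel--de Siebenthal type) closed subsystems. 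Examples are $A_7$, $D_6+A_1$, $A_5+A_2$, $3A_1$-type configurations, $E_6$-related ones and $A_3+A_3+A_1$. So the constituent for $\gcd\{12,q\}=g$ equals $\chi_{\A_\PHI}(q)$, given in \cref{thm2.2}, plus correction terms $\sum_X \mu(\hat0,X)\bigl(\prod_j\gcd\{d_{X,j},q\}-1\bigr)q^{7-\rk X}$ supported on flats with nontrivial torsion.

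The key steps, in order, are:
\begin{enumerate}
	\item Classify the $W$-orbits of flats whose root sublattice is not saturated in $Q$, and record the elementary divisors for each (they divide $12$).
	\item Compute the orbit sizes and Möbius values $\mu(\hat0,X)$, which equal $(-1)^{\rk X}$ times the product of the exponents of the restricted arrangement.
	\item Sum the corrections for each $g\in\{2,3,4,6,12\}$.
\end{enumerate}
\cref{consticoef} gives a consistency check, since high-degree coefficients must agree between constituents. The duality in \cref{thm2.2} \cref{thm2.2.3}, $f^{(r)}(q)=-f^{(18-r)}(18-q)$, gives another: for example, the $\gcd=2$ and $\gcd=4$ constituents are exchanged with appropriate arguments, and the $\gcd=6$ and $\gcd=12$ constituents are self-dual in the shifted sense. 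The vanishing $\chi_\PHI^\quasi(q)=0$ for $1\le q<18$ provides many linear constraints as well.

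The main obstacle is step 1, the bookkeeping of torsion flats. A practical alternative I would try first is interpolation, since each constituent is monic of degree $7$:
\begin{itemize}
	\item compute $\#M(\PHI;q)$ by machine for enough $q$ in each residue class;
	\item combine these values with the vanishing for $q<18$ and the duality relations;
	\item use the agreement of top coefficients from \cref{consticoef};
	\item solve for the remaining unknown lower coefficients and factor the resulting polynomials.
\end{itemize}
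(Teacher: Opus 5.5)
The paper gives no proof of this proposition; it is quoted from Kamiya--Takemura--Terao. The question is therefore whether your plan would produce it, and as written it would not.

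Your main route rests on the identity $\#M(\PHI;q)=\sum_X\mu(\hat{0},X)\,\#\{z\in Z/qZ : \beta(z)\equiv 0 \text{ for all } \beta\in\PHI\cap X^\perp\}$ over the intersection lattice of $\A_\PHI$. This identity is false as soon as $q$ is not coprime to the period. In the inclusion--exclusion sum, the term $q^{\ell-r(J)}\prod_j\gcd\{d_{J,j},q\}$ depends on the lattice $\mathbb{Z}J$, not on the flat spanned by $J$. So the sum over $J$ cannot be collapsed onto flats using the real M\"obius function.

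Already for $B_2$ and $q=2$ your formula gives $4-4\cdot 2+3\cdot 1=-1$, whereas $\chi^\quasi_{B_2}(2)=0$. The discrepancy comes from $J=\{e_1-e_2,\,e_1+e_2\}$, which spans the same flat as $\PHI$ but generates an index-$2$ sublattice.

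Worse, for every flat $X$ the subsystem $\PHI\cap X^\perp$ is $W$-conjugate to a standard parabolic subsystem $\PHI_I$. Its root lattice $\bigoplus_{i\in I}\mathbb{Z}\alpha_i$ is a direct summand of $Q$. Hence all of your correction terms vanish, and the method returns $\chi_{\A_\PHI}(q)$ for every $q$, contradicting the period $12$ you quote. The torsion subsystems you list ($A_7$, $D_6+A_1$, $A_5+A_2$, \dots) have rank $7$ and are not of the form $\PHI\cap X^\perp$ at all.

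A correct regrouping is by the root-generated lattice $\mathbb{Z}J$. That means summing over the poset of subsets $C\subseteq\PHI^+$ with $C=\PHI^+\cap\mathbb{Z}C$, using that poset's own M\"obius function. For $B_2$ this poset contains the extra element $\{e_1\pm e_2\}$, contributing $+\gcd\{2,q\}$, and its top element gets $\mu=2$ instead of $3$; this yields $(q-2)^2$ for even $q$. Classifying these subsets for $E_7$ and computing that M\"obius function is the actual work, and the proposal does not address it. (Also, $|\mu(\hat{0},X)|$ is the product of the exponents of the localization $\A_X$, not of the restriction.)

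Your fallback is the right kind of argument: it is how the paper itself derives the formulas for $\chi^\quasi_{\PHI^\vee,Z}$ in \cref{typeF4root} and \cref{typeG2root}. However, the duality you feed into it is misstated. Under $q\mapsto 18-q$, the class $\gcd\{12,q\}=6$ goes to $\gcd\{12,18-q\}=12$ and vice versa, so $f^{(6)}(t)=-f^{(12)}(18-t)$. The self-dual constituents are those with $\gcd$ equal to $1$ and $3$. Imposing self-duality on $f^{(6)}$ would force $f^{(6)}(9)=0$, whereas the stated constituent has $f^{(6)}(9)=22680$.

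After correcting the pairing, the vanishing conditions on paired constituents coincide: $f^{(4)}$ at $4,8,16$ is $f^{(2)}$ at $14,10,2$, and $f^{(12)}(12)=0$ is $f^{(6)}(6)=0$. So vanishing alone leaves free parameters. You must supply the sets $\mathcal{E}_s$ for $E_7$ together with at least one nonzero value. One comes for free: $\chi^\quasi_\PHI(18)=\#W/f=1451520$, because $18A_\circ\cap Z=\{\sum_i\varpivee_i\}$ and \cref{yoshinagathm} applies.

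In fact \cref{yoshinagathm} gives a cleaner complete proof. It says $\chi^\quasi_\PHI(q)=1451520\cdot\#\{c\in\mathbb{Z}_{>0}^8 : c_0+2c_1+2c_2+3c_3+4c_4+3c_5+2c_6+c_7=q\}$. The generating function of this count is $x^{18}/\bigl((1-x)^2(1-x^2)^3(1-x^3)^2(1-x^4)\bigr)$. A partial-fraction expansion yields all six constituents, with no enumeration of $Z/qZ$ and no torsion bookkeeping.
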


We omit the result for type $E_8$ since it is not necessary in this paper.

\subsubsection{type $F_4$}\ 

\noindent
Let $\{e_1,e_2,e_3,e_4\}$ be a standard basis for $E = \mathbb{R}^4$.
Then
\begin{align}
	\PHI &= \bigset{\pm(e_i-e_j),\ \pm(e_i+e_j),\ \pm e_k}{i,j,k \in \{1,2,3,4\},\ i < j} \\
	&\qquad \qquad \sqcup \Bigset{\frac{\nu_1e_1 + \nu_2e_2 + \nu_3e_3 + \nu_4e_4}{2}}{
			\nu_1,\nu_2,\nu_3,\nu_4 \in \{-1,1\}
	}
\end{align}
is a root system of type $F_4$.
The roots
\begin{align}
	\DELTA = \left\{
		e_2 - e_3,\ e_3 - e_4,\ e_4,\ \frac{e_1 - e_2 - e_3 - e_4}{2}
	\right\}	
\end{align}
is a basis of $\PHI$.

\begin{proposition}[type $F_4$, {\cite[\S5.2]{KamiyaTakemuraTeraoRoot}}]\label{typeF4root}
	Let $\PHI$ be a root system of type $F_4$.
	Then
	\begin{align}
		\chi_{\PHI}^\quasi(q) = \begin{cases*}
			(q-1)(q-5)(q-7)(q-11)		& \textup{if $\gcd\{12,q\} = 1$;}\\
			(q-2)(q-10)(q^2-12q+44)		& \textup{if $\gcd\{12,q\} = 2$;}\\
			(q-3)(q-9)(q^2-12q+19)		& \textup{if $\gcd\{12,q\} = 3$;}\\
			(q-4)^2(q-8)^2				& \textup{if $\gcd\{12,q\} = 4$;}\\
			(q-6)^2(q^2-12q+28)			& \textup{if $\gcd\{12,q\} = 6$;}\\
			q^4-24q^3+208q^2-768q+1152	& \textup{if $\gcd\{12,q\} = 12$.}
		\end{cases*}
	\end{align}
	Let $\PHI^\vee$ denote the dual root system of $\PHI$.
	Then
	\begin{align}
		\chi_{\PHI^\vee,\, Z}^\quasi(q) = \begin{cases*}
			(q-1)(q-5)(q-7)(q-11)		& \textup{if $\gcd\{12,q\} = 1$;}\\
			(q-2)(q-10)^2(q-14)			& \textup{if $\gcd\{12,q\} = 2$;}\\
			(q-3)(q-9)(q^2-12q+19)		& \textup{if $\gcd\{12,q\} = 3$;}\\
			(q-4)(q-8)^2(q-16)			& \textup{if $\gcd\{12,q\} = 4$;}\\
			(q-6)(q^3-30q^2+268q-552)	& \textup{if $\gcd\{12,q\} = 6$;}\\
			(q-12)(q^3-24q^2+160q-384)	& \textup{if $\gcd\{12,q\} = 12$.}\\
		\end{cases*}
		\label{F4vee}
	\end{align}
\end{proposition}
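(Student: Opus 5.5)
The plan is to handle the two formulas separately. The first formula, for $\chi_\PHI^\quasi$, is exactly the computation of \cite[\S5.2]{KamiyaTakemuraTeraoRoot}, and I will simply quote it. The real work is the second formula, for $\chi_{\PHI^\vee,\,Z}^\quasi$. Here $Z$ is the coweight lattice of $F_4$ and the arrangement is cut out by the positive coroots.

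First I make everything explicit in coordinates. For $F_4$ the condition $(e_k,x)\in\mathbb{Z}$ forces $x\in\mathbb{Z}^4$. The condition $(\tfrac12(\pm e_1\pm e_2\pm e_3\pm e_4),x)\in\mathbb{Z}$ then forces $x_1+x_2+x_3+x_4$ to be even. Hence $Z=D_4=\{x\in\mathbb{Z}^4 : \sum x_i\in 2\mathbb{Z}\}$, consistent with $f=1$, i.e.\ $Z=\veeQ$. The coroots are $\pm e_i\pm e_j$, $\pm 2e_k$ and $\pm e_1\pm e_2\pm e_3\pm e_4$. Choosing a $\mathbb{Z}$-basis of $D_4$, I write down the $4\times 24$ coefficient matrix $S$ of $(\PHI^\vee)^+$. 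The quantity $\chi_{\PHI^\vee,\,Z}^\quasi(q)$ is then the number of $z\in\mathbb{Z}_q^4$ such that $zS$ has no zero component.

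Next I determine the minimum period. The arrangement is central, so $q_0=0$, and the elementary-divisor formula of \cite{KamiyaTakemuraTerao} applies. The minimum period is the lcm of the last elementary divisors of the submatrices $S_J$ \cite{HigashitaniTranYoshinaga}. I expect this lcm to be $12$, the same as for $F_4$ itself. The factor $4$ should come from rank-one and rank-two subsets involving $2e_k$ and $e_1+e_2+e_3+e_4$ in $D_4$ coordinates. The factor $3$ should come from rank-$3$ or rank-$4$ subsets, as in the ordinary $F_4$ case. By the gcd-property, each constituent then depends only on $\gcd\{12,q\}$, giving the six cases $1,2,3,4,6,12$ of \cref{F4vee}.

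To identify the constituents I use two facts: each constituent is a monic polynomial of degree $4$ (\cref{KamiyaTakemuraTerao}), and the first constituent is the characteristic polynomial $(q-1)(q-5)(q-7)(q-11)$. The latter holds because $\PHI^\vee$ is again of type $F_4$, with exponents $1,5,7,11$ (\cref{thm2.2}). For each of the other five gcd-classes, four unknown coefficients remain. \cref{consticoef} cuts these down: all constituents share the $q^3$ coefficient $-24$, the number of positive roots, and further coefficients agree whenever the relevant gcd's with $\mathcal{E}_s$ coincide. The remaining coefficients I fix by counting $\#M(\PHI^\vee,Z;q)$ directly for a few small $q$ in each class and interpolating. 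To reduce the enumeration, I split over the coset $\mathbb{Z}^4\supset D_4$. Concretely, I count $x\in\mathbb{Z}_q^4$ with the parity condition, and for $q$ even I use the coset method (\cref{cosetmethod}) relative to the sublattice $2\mathbb{Z}^4$ or $D_4$ itself, which turns the counts into sums over simpler box conditions on the $x_i$.

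As a final check, the output should be compatible with the formula $\chi=\frac{\#W}{f}\Ell$ and with the reciprocity $q\mapsto 12-q$, after reindexing constituents by $\gcd$. The main obstacle I expect is the even cases $\gcd\{12,q\}\in\{2,4,6,12\}$. There the conditions $2x_k\not\equiv0$ and $\pm x_1\pm x_2\pm x_3\pm x_4\not\equiv 0$ interact with the parity constraint defining $D_4$. The direct count or the elementary-divisor bookkeeping is then genuinely case-heavy, and I would carry it out with care, or by machine, for the interpolation values.
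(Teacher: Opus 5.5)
\textbf{Gap: a false reduction step.} Your outline (fix the period, then determine each monic quartic constituent by counting and interpolation) could in principle be carried out by machine. But one reduction you rely on is false and would break the interpolation. Here it is \emph{not} true that all constituents share the $q^3$ coefficient $-24$. The coroots of the twelve short positive roots, namely $2e_k$ and $e_1\pm e_2\pm e_3\pm e_4$, are twice primitive vectors of $Z^\vee=Q=\mathbb{Z}^4\cup\bigl(\mathbb{Z}^4+\tfrac12(1,1,1,1)\bigr)$. Hence $\mathcal{E}_1=\{1,2\}$, and every even constituent has $q^3$ coefficient $-(12+2\cdot 12)=-36$. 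You can read this off the target factor $(q-2)(q-10)^2(q-14)$. Imposing $-24$ and fitting the remaining coefficients to a few counted values would give a wrong quartic or an inconsistent system.

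Similarly, the $4$ in the period does not come from $\#J\le 2$. In fact $\mathcal{E}_2=\{1,2\}$, and $4$ first appears at $\#J=3$; for example $J=\{2e_1,2e_2,e_3+e_4\}$ has elementary divisors $1,2,4$.

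\textbf{Your final checks are invalid.} The reciprocity $q\mapsto 12-q$ of \cref{thm2.2} and the formula $\chi=\frac{\#W}{f}\Ell_{A_\circ}$ concern a root system together with its own coweight lattice. $(\PHI^\vee,Z)$ is not such a pair, since its short coroots are not primitive in $Z^\vee$. Indeed $\chi_{\PHI^\vee,Z}^\quasi(12)=0$, whereas reciprocity would force this value to equal $f^{(12)}(0)=4608$, and the Ehrhart formula would give $1152$. These checks would therefore reject the correct answer.

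\textbf{The idea you are missing.} The paper uses the fact that the coefficient matrix of $(\PHI^\vee)^+$ with respect to $Z$ is the matrix for $\PHI^+$ with the short-root columns multiplied by $2$. This gives two things for free.
\begin{enumerate}
\item For odd $q$, multiplication by $2$ is invertible mod $q$. So the constituents for $\gcd\{12,q\}\in\{1,3\}$ coincide with those of $\chi_\PHI^\quasi$, and no counting is needed.
\item For every $q$, $2a\not\equiv 0$ implies $a\not\equiv 0$, so $M(\PHI^\vee,Z;q)\subseteq M(\PHI;q)$. By \cref{thm2.2} the count then vanishes for $q<12$. This supplies the roots $2,10$ of $f^{(2)}$, the roots $4,8$ of $f^{(4)}$, and the root $6$ of $f^{(6)}$.
\end{enumerate}
With the correct $\mathcal{E}_2=\{1,2\}$ and $\mathcal{E}_3=\{1,2,4\}$, \cref{consticoef} makes all even constituents agree in degrees $\ge 2$. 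It also makes $f^{(2)},f^{(6)}$ agree in degrees $\ge 1$, and likewise $f^{(4)},f^{(12)}$. It then suffices to check $\chi_{\PHI^\vee,Z}^\quasi(12)=\chi_{\PHI^\vee,Z}^\quasi(14)=\chi_{\PHI^\vee,Z}^\quasi(16)=0$. Write $f^{(2)}=(q-2)(q-10)(q-14)(q-a)$ and $f^{(4)}=(q-4)(q-8)(q-16)(q-b)$. Matching the $q^3$ and $q^2$ coefficients forces $a=10$ and $b=8$. The vanishing at $6$ and $12$ then gives $f^{(6)}=f^{(2)}+512$ and $f^{(12)}=f^{(4)}+512$.

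If you keep your brute-force route instead, you must drop the $-24$ assumption (or compute the $\mathcal{E}_s$ correctly). You must also count enough values in every class to determine all remaining coefficients.
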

\begin{proof}
	The cosfficient matrix $S'$ of $\PHI^\vee$ with respect to $Z$ is as follows:
	\begin{align}
		S' = \left(\begin{array}{cccccccccccccccccccccccc}
			1 & 0 & 0 & 0 & 1 & 0 & 0 & 2 & 0 & 0 & 2 & 1 & 0 & 2 & 1 & 0 & 1 & 2 & 1 & 2 & 2 & 1 & 1 & 2 \\
			0 & 1 & 0 & 0 & 1 & 2 & 0 & 2 & 2 & 1 & 2 & 1 & 2 & 2 & 2 & 1 & 1 & 4 & 2 & 4 & 4 & 2 & 3 & 3 \\
			0 & 0 & 2 & 0 & 0 & 2 & 2 & 2 & 2 & 2 & 2 & 2 & 4 & 4 & 2 & 2 & 2 & 4 & 2 & 6 & 6 & 4 & 4 & 4 \\
			0 & 0 & 0 & 2 & 0 & 0 & 2 & 0 & 2 & 0 & 2 & 0 & 2 & 2 & 0 & 2 & 2 & 2 & 2 & 2 & 4 & 2 & 2 & 2
		\end{array}\right).
	\end{align}
	This matrix is obtained by multiplying by $2$ the columns corresponding to short roots of the coefficient matrix of $\PHI$.
	Therefore it is easy to see that $\chi_{\PHI^\vee,\, Z}^\quasi(q) = \chi_\PHI^\quasi(q)$ if $q$ is odd.
	Moreover, $\chi_\PHI^\quasi(q) = 0$ implies $\chi_{\PHI^\vee,\, Z}^\quasi(q) = 0$, that is, $\chi_{\PHI^\vee,\, Z}^\quasi(q) = 0$ if $q < 12$.
	
	Use \cref{consticoef} for $\chi_{\PHI^\vee,\, Z}^\quasi$.
	We can see that 
	\begin{align}
		\mathcal{E}_1 = \mathcal{E}_2 = \{1,2\},\qquad \mathcal{E}_3 = \{1,2,4\},\qquad \mathcal{E}_4 = \{1,2,4,6\}.
	\end{align}
	Hence the minimum period of $\chi_{\PHI^\vee,\, Z}^\quasi$ is $\lcm\mathcal{E}_4 = 12$.
	For each divisor $r$ of $12$, let $f^{(r)}$ be the $r$-th constituent of $\chi_{\PHI^\vee,\, Z}^\quasi$.
	Then we have
	\begin{align}
		\deg(f^{(r_1)} - f^{(r_2)}) < 2
	\end{align}
	for all $r_1,r_2 \in \{2,4,6,12\}$, and 
	\begin{align}
		\deg(f^{(2)} - f^{(6)}) < 1,\qquad \deg(f^{(4)} - f^{(12)}) < 1.
	\end{align}
	Therefore it is sufficient to compute the special value
	\begin{align}
		\chi_{\PHI^\vee,\,Z}^\quasi(12) = \chi_{\PHI'^\vee,\,Z}^\quasi(14) = \chi_{\PHI^\vee,\,Z}^\quasi(16) = 0
	\end{align}
	to obtain \cref{F4vee}.
\end{proof}

\subsubsection{type $G_2$}\ 

\noindent 
Let $\{e_1,e_2,e_3\}$ be a standard basis for $\mathbb{R}^3$, and define
\begin{align}
	E \ceq \bigset{(x_1,x_2,x_3) \in \mathbb{R}^3}{x_1 + x_2 + x_3 = 0}.
\end{align}
Then
\begin{align}
	\PHI &= \Bigl\{
		\pm(e_1 - e_2),\ 
		\pm(-2e_1 + e_2 + e_3),\ 
		\pm(e_3 - e_1),\ 
		\pm(e_3 - e_2).\ 
		\pm(e_1 - 2e_2 + e_3),\ 
		\pm(-e_1 - e_2 + 2e_3)
	\Bigr\}
\end{align}
is a root system of type $G_2$.
The roots
\begin{align}
	\DELTA = \left\{
		e_1 - e_2,\ -2e_1 + e_2 + e_3
	\right\}	
\end{align}
is a basis of $\PHI$.

\begin{proposition}[type $G_2$, {\cite[\S5.1]{KamiyaTakemuraTeraoRoot}}]\label{typeG2root}
	Let $\PHI$ be a root system of type $G_2$.
	Then
	\begin{align}
		\chi_{\PHI}^\quasi(q) = \begin{cases*}
			(q-1)(q-5)		& \textup{if $\gcd\{6,q\} = 1$;}\\
			(q-2)(q-4)		& \textup{if $\gcd\{6,q\} = 2$;}\\
			(q-3)^2			& \textup{if $\gcd\{6,q\} = 3$;}\\
			q^2-6q+12			& \textup{if $\gcd\{6,q\} = 6$.}
		\end{cases*}
	\end{align}
	Let $\PHI^\vee$ denote the dual root system of $\PHI$.
	Then
	\begin{align}
		\chi_{\PHI^\vee,\,Z}^\quasi(q) = \begin{cases*}
			(q-1)(q-5)		& \textup{if $\gcd\{6,q\} = 1$;}\\
			(q-2)(q-4)		& \textup{if $\gcd\{6,q\} = 2$;}\\
			(q-3)(q-9)		& \textup{if $\gcd\{6,q\} = 3$;}\\
			(q-6)^2			& \textup{if $\gcd\{6,q\} = 6$.}
		\end{cases*}
		\label{G2vee}
	\end{align}
\end{proposition}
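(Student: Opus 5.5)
We will argue exactly as in the proof of the $F_4$ case in \cref{typeF4root}. The first formula is quoted from \cite[\S5.1]{KamiyaTakemuraTeraoRoot}, so only \cref{G2vee} needs proof.

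\textbf{Setting up the matrix.} With $\alpha_1 = e_1 - e_2$ (short) and $\alpha_2 = -2e_1+e_2+e_3$ (long), the positive roots in simple-root coordinates are the columns
\begin{align}
	\alpha_1,\ \alpha_2,\ \alpha_1+\alpha_2,\ 2\alpha_1+\alpha_2,\ 3\alpha_1+\alpha_2,\ 3\alpha_1+2\alpha_2,
\end{align}
and the short ones are $\alpha_1$, $\alpha_1+\alpha_2$, $2\alpha_1+\alpha_2$. The ratio of squared lengths is $3$. So, as in the $F_4$ proof, we realize $\PHI^\vee$ inside $Z^\vee = Q$ by multiplying the short-root columns by $3$. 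This gives the coefficient matrix
\begin{align}
	S' = \begin{pmatrix}
		3 & 0 & 3 & 6 & 3 & 3\\
		0 & 1 & 3 & 3 & 1 & 2
	\end{pmatrix}.
\end{align}
Since $\ell = 2$, we can apply the elementary-divisor formula for $\chi^\quasi$ from \cite{KamiyaTakemuraTerao} directly to $S'$, summing over all $J \subseteq \{1,\ldots,6\}$.

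\textbf{The formula and the period.} For $\#J = 1$, the single divisor is the gcd of the column's entries, namely $3,1,3,3,1,1$. For $\#J \ge 2$, every submatrix has rank $2$ because no two columns are proportional. Its elementary divisors are $d_{J,1} = $ the gcd of all entries, and $d_{J,2} = $ the gcd of all $2\times 2$ minors divided by $d_{J,1}$. Listing these for the $15$ pairs (for example, columns $1,3$ give $3,3$ and columns $1,2$ give $1,3$), and noting that larger $J$ only lower the divisors, we find that all divisors lie in $\{1,2,3,6\}$, with $3\cdot 3$ occurring. The minimum period is therefore $6$ by \cite{HigashitaniTranYoshinaga}.

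\textbf{The constituents.} Each constituent has the form
\begin{align}
	q^2 - \bigl(3 + 3\gcd\{3,q\}\bigr)\,q + c(q),
\end{align}
where $c(q) = \sum_{\#J\ge 2} (-1)^{\#J}\gcd\{d_{J,1},q\}\gcd\{d_{J,2},q\}$. The linear coefficient already equals $-6$ or $-12$ according to whether $3 \nmid q$ or $3 \mid q$, in agreement with \cref{G2vee}.

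The constant term is the main bookkeeping step. To avoid the full inclusion--exclusion over $57$ subsets, we will pin it down by special values, as in the $F_4$ proof.
\begin{itemize}
	\item Since $S'$ is obtained from the matrix of $\PHI$ by scaling columns by $3$, we get $\chi^\quasi_{\PHI^\vee,Z}(q) = \chi^\quasi_\PHI(q)$ whenever $3 \nmid q$. This gives the constituents for $\gcd\{6,q\} \in \{1,2\}$.
	\item Also $\chi_\PHI^\quasi(q) = 0$ forces $\chi^\quasi_{\PHI^\vee,Z}(q) = 0$, so $\chi^\quasi_{\PHI^\vee,Z}(q) = 0$ for $q < 6$ by \cref{thm2.2}.
	\item For $\gcd\{6,q\} = 3$, evaluating at $q = 3$ (value $0$) gives $q^2 - 12q + 27 = (q-3)(q-9)$.
	\item For $\gcd\{6,q\} = 6$, a direct count at $q = 6$ fixes the constant. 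The conditions $3z_1 \not\equiv 0$ and $3z_1 + z_2 \not\equiv 0 \pmod 6$ etc.\ show $\chi^\quasi_{\PHI^\vee,Z}(6) = 0$, hence $(q-6)^2$.
\end{itemize}

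The only delicate point is this last verification, together with checking that the constant term for $q \equiv 0 \pmod 6$ is determined by one value. That holds because the linear coefficient is already known from the $\#J = 1$ terms, so a single evaluation fixes the remaining constant.
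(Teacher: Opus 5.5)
Your proposal is correct and follows essentially the paper's route: the same matrix $S'$, agreement with $\chi_\PHI^\quasi$ for $3\nmid q$, vanishing below $h=6$, and special values. The one difference is that you read the linear coefficient $-(3+3\gcd\{3,q\})$ directly off the singleton terms rather than invoking \cref{consticoef} with $\mathcal{E}_1=\{1,3\}$, which spares you the paper's extra check $\chi^\quasi_{\PHI^\vee,Z}(9)=0$. (In your count at $q=6$, the conditions that actually force vanishing are $3z_1\not\equiv 0$ and $6z_1+3z_2\not\equiv 0$, which make $z_1,z_2$ odd and so contradict $3z_1+3z_2\not\equiv 0 \pmod 6$.)
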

\begin{proof}
	The cosfficient matrix $S'$ of $\PHI^\vee$ with respect to $Z$ is as follows:
	\begin{align}
		S' = \left(\begin{array}{cccccc}
			3 & 0 & 3 & 6 & 3 & 3\\
			0 & 1 & 3 & 3 & 1 & 2
		\end{array}\right).
	\end{align}
	This matrix is obtained by multiplying by $3$ the columns corresponding to short roots of the coefficient matrix of $\PHI$.
	Therefore it is easy to see that $\chi_{\PHI^\vee,\, Z}^\quasi(q) = \chi_\PHI^\quasi(q)$ if $q \not\in 3\mathbb{Z}$.
	Moreover, $\chi_\PHI^\quasi(q) = 0$ implies $\chi_{\PHI^\vee,\, Z}^\quasi(q) = 0$, that is, $\chi_{\PHI^\vee,\, Z}^\quasi(q) = 0$ if $q < 6$.
	
	Use \cref{consticoef} for $\chi_{\PHI^\vee,\, Z}^\quasi$.
	We can see that 
	\begin{align}
		\mathcal{E}_1 = \{1,3\},\qquad \mathcal{E}_2 = \{1,3,6\}.
	\end{align}
	Hence the minimum period of $\chi_{\PHI^\vee,\, Z}^\quasi$ is $\lcm\mathcal{E}_2 = 6$.
	For each divisor $r$ of $6$, let $f^{(r)}$ be the $r$-th constituent of $\chi_{\PHI^\vee,\, Z}^\quasi$.
	Then we have
	\begin{align}
		\deg(f^{(3)} - f^{(6)}) < 1.
	\end{align}
	Therefore it is sufficient to compute the special value
	\begin{align}
		\chi_{\PHI^\vee,\, Z}^\quasi(6) = \chi_{\PHI^\vee,\, Z}^\quasi(9) = 0
	\end{align}
	to obtain \cref{G2vee}.
\end{proof}

\section{Equivariant version of the characteristic quasi-polynomials}\label{sec3}

We discuss the equivariant version of the characteristic quasi-polynomials of root systems.
For details, including the general case, see \cite{Uchiumi}.

\subsection{Notations}\ \label{sec3.1}

\noindent
Let $\PHI$ be an irreducible reduced root system.
Recall that 
\begin{align}
	M(\PHI;q) \ceq M(\PHI,Z;q) = \bigset{\pi_q(x) \in Z/qZ}{(\beta,x) \not\equiv 0 \pmod{q} \tforall \beta \in \PHI^+}
\end{align}
for the coweight lattice $Z$ of $\PHI$.
Consider the action of $W$ on $M(\PHI; q)$.
Since $Z$ is invariant under the action of $W$ on $E$, the action $\rho_q : W \lra \GL(Z/qZ)$ is induced as follows:
\begin{align}
	\rho_q(w) : \pi_q(x) \lmapsto \pi_q(w(x))
\end{align}
for all $w \in W$ and $x \in Z$.
We will omit the notation $\rho_q$ below for simplicity.
Since $\PHI$ is invariant under the action of $W$, the set $M(\PHI; q)$ is also invariant under $W$ \cite[Lemma 2.3]{Uchiumi}.
Hence we can consider the permutation character $\chi_{\PHI,q}$ on $M(\PHI; q)$ for each $q \in \mathbb{Z}_{>0}$.
As well known, for any $w \in W$, we have
\begin{align}
	\chi_{\PHI,q}(w) = \#\bigset{\pi_q(x) \in Z/qZ}{w(\pi_q(x)) = \pi_q(x)}.
\end{align}
By substituting the identity element $1 \in W$ into $\chi_{\PHI,q}$, we obtain the characteristic quasi-polynomial:
\begin{align}
	\chi_{\PHI,q}(1) = \chi_\PHI^\quasi(q).
\end{align}

In \cite{Uchiumi}, the following is obtained.
\begin{theorem}[{\cite[Theorem 2.6]{Uchiumi}}]
	Let $w \in W$.
	Then $\chi_{\PHI,q}(w)$ is a quasi-polynomial in $q$, and has the gcd-property.
\end{theorem}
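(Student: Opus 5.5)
We fix $w \in W$ and show that $q \mapsto \chi_{\PHI,q}(w)$ is the number of lattice points of a dilated rational polytope, or rather a signed sum of such counts, each of which is a quasi-polynomial with the gcd-property. Write $Z \cong \mathbb{Z}^\ell$ using the basis $\DELTA^\vee$, and let $A_w \in \GL_\ell(\mathbb{Z})$ be the matrix of $w$. Here $\chi_{\PHI,q}(w)$ means the number of points of $M(\PHI;q)$ fixed by $w$. Then
\begin{align}
	\chi_{\PHI,q}(w) = \#\bigset{z \in \mathbb{Z}_q^\ell}{z(A_w - I) \equiv 0,\ \ zS \text{ has no zero components} \pmod{q}},
\end{align}
where $S$ is the coefficient matrix of $\PHI^+$. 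The plan is to express this as an inclusion--exclusion over subsets $J \subseteq \PHI^+$, just as in the formula of Kamiya--Takemura--Terao recalled above:
\begin{align}
	\chi_{\PHI,q}(w) = \sum_{J \subseteq \PHI^+} (-1)^{\#J}\, \#\bigset{z \in \mathbb{Z}_q^\ell}{z\,T_J \equiv 0 \pmod{q}},
\end{align}
where $T_J$ is the integer matrix obtained by concatenating the columns of $A_w - I$ with the columns of $S_J$.

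The key step is to count the solutions of a homogeneous linear system modulo $q$. Let $T$ be an $\ell \times m$ integer matrix of rank $r$ with elementary divisors $d_1,\ldots,d_r$. The Smith normal form gives $T = UDV$ with $U,V$ unimodular, and this yields
\begin{align}
	\#\bigset{z \in \mathbb{Z}_q^\ell}{zT \equiv 0} = q^{\ell-r}\prod_{j=1}^{r}\gcd\{d_j,q\}.
\end{align}
The point is that the map $z \mapsto zU$ is a bijection of $\mathbb{Z}_q^\ell$. After that substitution, the solution set is the kernel of a diagonal map, which we count coordinatewise.

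Applying this to each $T_J$ writes $\chi_{\PHI,q}(w)$ as a finite $\mathbb{Z}$-linear combination of terms $q^{\ell-r(J)}\prod_j \gcd\{d_{J,j},q\}$. Each factor $\gcd\{d,q\}$ depends only on $\gcd\{\tilde{n},q\}$, where $\tilde{n}$ is the lcm of all the $d_{J,j}$ that occur. So for $q \equiv r \pmod{\tilde{n}}$ the whole expression equals a fixed polynomial in $q$, namely
\begin{align}
	f^{(r)}(t) = \sum_J (-1)^{\#J}\Bigl(\prod_j \gcd\{d_{J,j},r\}\Bigr)t^{\ell-r(J)},
\end{align}
and this depends only on $\gcd\{\tilde{n},r\}$. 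That gives both the quasi-polynomiality and the gcd-property, with period $\tilde{n}$, and it holds for all $q>0$.

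I expect the main obstacle to be the counting lemma. One has to be careful that the invariant factors are taken for the augmented matrix $T_J$, not for $S_J$ alone, and that the mod $q$ count is insensitive to the unimodular changes of basis on both sides. Once that is settled, the rest is bookkeeping.
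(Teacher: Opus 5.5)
Your proof is correct and complete: adjoining the columns of $A_w - I$ to each $S_J$ turns the fixed-point count on $M(\PHI;q)$ into a Kamiya--Takemura--Terao-type inclusion--exclusion over homogeneous systems. The Smith normal form count $q^{\ell-r}\prod_j\gcd\{d_j,q\}$ then gives quasi-polynomiality with the gcd-property for all $q>0$, with a period equal to the lcm of the elementary divisors of the augmented matrices. The paper gives no proof of this statement and only cites \cite[Theorem 2.6]{Uchiumi}, so I cannot check your route against it directly; your argument is the natural equivariant extension of the Kamiya--Takemura--Terao proof and most likely what that reference does, while the paper itself uses the Smith normal form of $R_w - I$ alone only in Section~4.1, to decompose the fixed torus for explicit computations.
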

One of the purposes of this paper is to compute all the constituents of $\chi_{\PHI,q}(w)$.

For later, reinterpret the characteristic quasi-polynomial as a counting of $q$-torsion points on the torus.
Let $T \ceq E/Z$ be an $\ell$-torus, and $\pi_T : E \lra T$ be the natural projection.
We consider the following set as the complement of $\A_\PHI$ in $T$:
\begin{align}
	T(\A_\PHI) \ceq \pi_T(M^\aff) = \bigset{\pi_T(x) \in T}{(\beta,x) \not\in \mathbb{Z} \tforall \beta \in \PHI^+}.
\end{align}

For $q \in \mathbb{Z}_{>0}$, let $T[q]$ be a set of $q$-torsion points of $T$:
\begin{align}
	T[q] \ceq \bigset{t \in T}{qt = 0} = \bigset{\pi_T(x) \in T}{qx \in L}.
\end{align}
There exists a bijection $f : T[q] \lra Z/qZ$ defined by
\begin{align}
	f : \pi_T(x) \lmapsto \pi_q(qx)
\end{align}
for all $x \in \pi_T^{-1}(T[q])$.
Let $\GL(T)$ be a group of linear transformations of $T$ defined by
\begin{align}
	\GL(T) = \bigset{g :T \lra T}{\text{there exists $g' \in \GL(E)$ such that $g \circ \pi_T = \pi_T \circ g'$}}.
\end{align}
Then the action $\rho_T : W \lra \GL(T)$ is induced as follows:
\begin{align}
	\rho_T(w) : \pi_T(x) \lmapsto \pi_T(w(x)).
\end{align}
for all $w \in W$ and $x \in \pi_T^{-1}(T[q])$.
We will omit the notation $\rho_T$ below for simplicity.
It follows from \cite[Proposition 3.1]{Uchiumi} that the bijection $f$ is $W$-equivalent, that is, 
\begin{align}
	f \circ \rho_T(w) = \rho_q(w) \circ f
\end{align}
for all $w \in W$.
Furthermore, we have $f(T[q] \cap T(\A_\PHI)) = M(\PHI;\, q)$ \cite[Lemma 3.6]{Uchiumi}.
In other words, $T[q] \cap T(\A_\PHI)$ is isomorphic to $M(\PHI;\, q)$ as a $W$-set.
Therefore $\chi_{\PHI,q}$ is equal to the permutation character on $T[q] \cap T(\A)$, that is, 
\begin{align}
	\chi_{\PHI,q}(w) = \#\bigset{t \in T[q] \cap T(\A)}{w(t) = t}
\end{align}
for all $w \in W$.

\subsection{Representation using Ehrhart quasi-polynomials}\ \label{sec3.2}

\noindent
An approach to compute $\chi_{\PHI,q}(w)$ is to determine the equivariant Ehrhart quasi-polynomial of $A_\circ$.
Let $P^\diamond$ denote a fundamental parallelepiped of $Z$ defined by
\begin{align}
	P^\diamond = \sum_{i=1}^\ell (0,1] \varpivee_i = \bigset{x \in E}{0 < (\alpha_i,x) \leq 1 \tforall i \in \{1,\ldots,\ell\}}.
\end{align}
Then the restriction map $\pi_T^\diamond : P^\diamond  \lra T$ of $\pi_T$ is a bijection, and the action $\rho^\diamond$ of $W$ on $P^\diamond$ is induced by
\begin{align}
	\rho^\diamond(w) &= (\pi_T^\diamond)^{-1} \circ \rho_T(w) \circ \pi_T^\diamond\\
	 &= (\pi_T^\diamond)^{-1} \circ \pi_T \circ w
\end{align}
for all $w \in W$.
In other words, for any $x \in P^\diamond$, we have
\begin{align}
	\rho^\diamond(w)(x) = w(x) + z_{w,x}, \label{zwx}
\end{align}
where $z_{w,x} \in Z$ is the unique element such that $w(x) + z_{w,x} \in P^\diamond$.
Let $\mathcal{C}^\diamond$ denote the set of alcoves contained in $P^\diamond$.
In particular, the fundamental alcove $A_\circ$ belongs to $\mathcal{C}^\diamond$.
Then 
\begin{align}
	P^\diamond \setminus \bigcup_{H \in \A^\aff_\PHI}H = \bigsqcup_{C \in \mathcal{C}^\diamond}C.
\end{align}
Hence we have
\begin{align}
	\chi_\PHI^\quasi(q)	&= \sum_{C \in \mathcal{C}^\diamond}\Ell_C(q). \label{sumofEpoly}
\end{align}

\begin{remark}
	It is known in \cite[\S4.9]{Humphreys} that 
	\begin{align}
		\frac{\vol{P^\diamond}}{\vol{A_\circ}} = \frac{\#W}{f} = \ell! \cdot n_1 \cdots n_\ell.
	\end{align}
	Since all alcoves have the same Ehrhart quasi-polynomials, \cref{yoshinagathm} follows from \cref{sumofEpoly}.
	In particular, in the case where $q$ is coprime to $n_i$ for all $i \in \{1,\ldots,n\}$ (i.e., $q$ is coprime to $\tilde{n}_\PHI$), Athanasiadis proved the following \cite[Theorem 3.1]{Athanasiadis2004}:
	\begin{align}
		\chi_{\A_\PHI^m}(q) = \frac{\#W}{f} \cdot \Ell_{A_\circ}(q-mh),
	\end{align}
	where $\A_\PHI^m$ is a deformation of the Coxeter arrangement defined by
	\begin{align}
		\A_\PHI^m \ceq \bigset{H_\beta^k = \Bigset{x \in E}{(\beta,x) = k}}{\beta \in \PHI,\ k \in \{0,\ldots,m\}}
	\end{align}
	for $m \in \mathbb{Z}_{\geq 0}$.
	
\end{remark}

For each alcove $C \in \mathcal{C}^\diamond$, let $W_{C}$ and $W(C)$ denote the stabilizer subgroup and the orbit of $C$:
\begin{align}
	W_C = \bigset{w \in W}{\rho^\diamond(w)(C) = C},\qquad
	W(C) = \bigset{\rho^\diamond(w)(C) \in \mathcal{C}^\diamond}{w \in W}.
\end{align}
Let $C^w$ denote the set of points in $C$ fixed by $\rho^\diamond(w)$:
\begin{align}
	C^w = \bigset{x \in C}{\rho^\diamond(w)(x) = x}.
\end{align}
The \textbf{equivariant Ehrhart quasi-polynomial} $\chi_{C,q}$ of $C$ is the permutation character of $W_C$ on $qC \cap Z$, that is, we have
\begin{align}
	\chi_{C,q}(w) &= \#\bigset{x \in qC \cap Z}{\rho^\diamond(w)(x) = x}\\
	&= \#(qC^w \cap Z) = \Ell_{C^w}(q)
\end{align}
for all $w \in W_C$.
In \cite{Uchiumi}, we obtained the equivalent version of \cref{yoshinagathm}.
Let $K$ be a subgroup of $W$.
For any character $\chi : K \lra \mathbb{C}$ of $H$, the character $\Ind^W_K\chi$ of $W$ is induced such that
\begin{align}
	\left(\Ind^W_K\chi\right)(w) = \frac{1}{\#K}\sum_{\substack{u \in W\\ u^{-1}wu \in K}}\chi(u^{-1}wu)
\end{align}
for all $w \in W$.

\begin{theorem}[{\cite[Theorem 4.3, Theorem 5.2]{Uchiumi}}]\label{thm4.2}
	For $q \in \mathbb{Z}_{>0}$, we have
	\begin{align}
		\chi_{\PHI,q} = \sum_{C \in \mathcal{C}^\diamond}\frac{1}{\#W(C)}\Ind^W_{W_C}\chi_{C,q} = \Ind^W_{W_{A_\circ}}\chi_{A_\circ,q}. \label{equivyosthm}
	\end{align}
\end{theorem}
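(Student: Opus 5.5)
We prove both equalities in \cref{equivyosthm} by working on the torus model. By \cref{sec3.1}, $\chi_{\PHI,q}$ is the permutation character of $W$ on $T[q]\cap T(\A_\PHI)$. Via the bijection $\pi_T^\diamond$, this set is identified $W$-equivariantly (for the action $\rho^\diamond$) with
\begin{align}
	\bigsqcup_{C\in\mathcal{C}^\diamond}\Bigl(C\cap \tfrac1q Z\Bigr).
\end{align}
Scaling by $q$ identifies each piece with $qC\cap Z$.

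The first step is to check that $\rho^\diamond(w)$ permutes the alcoves in $\mathcal{C}^\diamond$. By \cref{zwx}, $\rho^\diamond(w)$ agrees with the affine map $x\mapsto w(x)+z_{w,x}$. The map $x\mapsto w(x)+z$ with $z\in Z$ preserves $\A^\aff_\PHI$, since $(\beta,w(x)+z)\in\mathbb{Z}$ iff $(w^{-1}\beta,x)\in\mathbb{Z}$. Moreover, $z_{w,x}$ is constant on each alcove $C$. To see this, note that $w(C)$ is an alcove, and the only walls of the translated parallelepiped $P^\diamond$ are the hyperplanes $H_{\alpha_i}^k$, which belong to $\A^\aff$. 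Hence $w(C)$ lies inside a single $Z$-translate of $P^\diamond$. So $\rho^\diamond(w)(C)$ is an alcove in $\mathcal{C}^\diamond$, and the $W$-set decomposes as a disjoint union over $W$-orbits of alcoves.

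For a single orbit $W(C)$, the $W$-set $\bigsqcup_{C'\in W(C)}(qC'\cap Z)$ is the set induced from the $W_C$-set $qC\cap Z$. Its permutation character is therefore $\Ind^W_{W_C}\chi_{C,q}$. This is the standard fact that the permutation character of $W\times_{W_C}X$ is the induced character; one can verify it directly by counting fixed points with the displayed induction formula.

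Summing over orbits, each orbit is counted $\#W(C)$ times when we sum over all $C\in\mathcal{C}^\diamond$. This gives the first equality in \cref{equivyosthm}.

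For the second equality, we show that $W$ acts transitively on $\mathcal{C}^\diamond$, and that the induced characters of conjugate stabilizers coincide. Transitivity comes from $W_\aff=\veeQ\rtimes W$ acting simply transitively on alcoves. Given $C\in\mathcal{C}^\diamond$, write $C=u(A_\circ)$ with $u=t_\lambda w\in W_\aff$ and $\lambda\in\veeQ\subseteq Z$. Then $\pi_T(C)=\pi_T(w(A_\circ))$, so $C=\rho^\diamond(w)(A_\circ)$. Hence $W(A_\circ)=\mathcal{C}^\diamond$, and there is a single orbit. The sum then collapses to $\frac{\#\mathcal{C}^\diamond}{\#W(A_\circ)}\Ind^W_{W_{A_\circ}}\chi_{A_\circ,q}$, whose coefficient is $1$.

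The main obstacle is the claim that $z_{w,x}$ is constant on alcoves, equivalently that each alcove lies in a single $Z$-translate of $P^\diamond$. This must be handled carefully because $P^\diamond$ is half-open. Since alcoves are open and the boundary hyperplanes $H_{\alpha_i}^0$ and $H_{\alpha_i}^1$ belong to $\A^\aff_\PHI$, no alcove meets the boundary of $P^\diamond$, so the half-openness does no harm.
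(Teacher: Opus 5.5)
Your proof is correct and follows essentially the argument behind the cited result. The paper itself only cites Uchiumi for this theorem, but the setup of \cref{sec3.2} is built for exactly your route: the $\rho^\diamond$-equivariant identification of $T[q]\cap T(\A_\PHI)$ with $\bigsqcup_{C\in\mathcal{C}^\diamond}(C\cap\frac1qZ)$, the orbit-by-orbit identification of the permutation character with $\Ind^W_{W_C}\chi_{C,q}$, and transitivity of $W$ on $\mathcal{C}^\diamond$ via $W_\aff=\veeQ\rtimes W$ with $\veeQ\subseteq Z$, with constancy of $z_{w,x}$ on alcoves and the half-openness of $P^\diamond$ handled soundly.
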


By substituting the identity element $1 \in W$ into \cref{equivyosthm}, it recovers \cref{yoshinagathm}.
In \cite{Uchiumi}, the above formula is actually used to compute $\chi_{\PHI,q}$ for type $A_\ell$.

\begin{corollary}
	If $w \in W$ is not conjugate to any element of $W_{A_\circ}$, then $\chi_{\PHI,q}(w) = 0$ for all $q \in \mathbb{Z}_{>0}$.
\end{corollary}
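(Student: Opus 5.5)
The corollary follows directly from \cref{thm4.2} and the explicit formula for an induced character. By \cref{equivyosthm} we have
\begin{align}
	\chi_{\PHI,q}(w) = \left(\Ind^W_{W_{A_\circ}}\chi_{A_\circ,q}\right)(w) = \frac{1}{\#W_{A_\circ}}\sum_{\substack{u \in W\\ u^{-1}wu \in W_{A_\circ}}}\chi_{A_\circ,q}(u^{-1}wu)
\end{align}
for every $q \in \mathbb{Z}_{>0}$ and every $w \in W$.

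Suppose $w$ is not conjugate in $W$ to any element of $W_{A_\circ}$. Then no $u \in W$ satisfies $u^{-1}wu \in W_{A_\circ}$, so the index set of the sum is empty. The sum is therefore $0$, and $\chi_{\PHI,q}(w) = 0$ for all $q$.

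The only point to check is that the induction formula is the one stated just before \cref{thm4.2}. We also need $W_{A_\circ}$ to be a genuine subgroup of $W$, which holds because it is the stabilizer of $A_\circ$ under the action $\rho^\diamond$. No step presents an obstacle.

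As a sanity check, there is a geometric reading. A fixed point $t \in T[q] \cap T(\A_\PHI)$ of $w$ lies in the image of some alcove $C \in \mathcal{C}^\diamond$, and $w$ then stabilizes $C$. Since all alcoves in $\mathcal{C}^\diamond$ lie in a single $W$-orbit up to conjugation of stabilizers, $w$ would be conjugate into $W_{A_\circ}$. This agrees with the algebraic argument, but the formal proof only needs the empty-sum observation.
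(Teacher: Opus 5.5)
Your proof is correct and is the argument the paper intends: the paper states this corollary without proof, as an immediate consequence of \cref{thm4.2}. Indeed, $\Ind^W_{W_{A_\circ}}\chi_{A_\circ,q}(w)$ is a sum over those $u \in W$ with $u^{-1}wu \in W_{A_\circ}$, and that index set is empty here, so the sum is $0$.
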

It is sufficient to compute $\chi_{\PHI,q}(w)$ only when $w \in W_{A_\circ}$.

\subsection{Stabilizer subgroups and Extended affine Weyl group}\ \label{sec3.3}

\noindent
In this subsection, we will describe the stabilizer subgroup with respect to the fundamental alcove.
For detail, see \cite{Garnier, KomrakovPremet}.

The semidirect group $\widehat{W_\aff} \ceq Z \rtimes W$ is called the \textbf{extended affine Weyl group} of $\PHI$.
It is clear that $W_\aff$ is a subgroup of $\widehat{W_\aff}$, but $\widehat{W_\aff}$ is not a reflection group.
Moreover, $\widehat{W_\aff}$ acts transively on $\mathcal{C}(\A_\PHI^\aff)$ but not simply transively, that is, $\overline{A_\circ}$ is not a fundamental domain for the action of $\widehat{W_\aff}$.
For any $z \in Z$, let $t_{z}$ denote a translation map on $E$ defined by $t_{z} : x \lmapsto x + z$.

Let $\widehat{\OMEGA}$ denote the stabilizer subgroup of $\widehat{W_\aff}$ with respect to the fundamental alcove:
\begin{align}
	\widehat{\OMEGA} \ceq \bigset{\widehat{\omega} \in \widehat{W_\aff}}{\widehat{\omega}(A_\circ) = A_\circ}.
\end{align}
It is well known that $\widehat{W_\aff} \cong W_\aff \rtimes \widehat{\OMEGA}$, and 
\begin{align}
	\widehat{\OMEGA} \cong \widehat{W_\aff}/W_\aff \cong Z/\veeQ.
\end{align}

Let $I \ceq \{0,\ldots,\ell\}$, and set
\begin{align}
	J \ceq \bigset{i \in I}{n_i = 1}.
\end{align}
Let $w_0$ denote the longest element of $W$.
For $j \in J \setminus \{0\}$, let $w_j$ be the longest element of the Weyl group corresponding to $\DELTA \setminus \{\alpha_j\}$.
We define $\omega_j \in W$ and $\widehat{\omega_j} \in \widehat{W_\aff}$ by
\begin{align}
	\omega_j \ceq w_jw_0,\qquad
	\widehat{\omega_j} \ceq t_{\varpivee_j} \circ \omega_j.
\end{align}
Then we have the following {\cite[VI, \S2, Proposition 6]{Bourbaki}}, {\cite[Proposition-Definition 2.2.1]{Garnier}}:
\begin{align}
	\widehat{\OMEGA} = \bigset{\widehat{\omega_j} \in \widehat{W_\aff}}{j \in J}.
\end{align}
The group $\widehat{\OMEGA}$ acts on the vertex set $\tbigset{\frac{\varpivee_i}{n_i}}{i \in I}$ of $A_\circ$ ({\cite[Lemma 2.2.4]{Garnier}}).
Define a permutation $\sigma_j$ on $I$ such that 
\begin{align}
	\widehat{\omega_j}\left(\dfrac{\varpivee_i}{n_i}\right) = \frac{\varpivee_{\sigma_j(i)}}{n_{\sigma_j(i)}}
\end{align}
for all $i \in I$.
Then it follows from {\cite[Lemma 2.2.5]{Garnier} and \cite[Lemma 1]{KomrakovPremet}} that 
\begin{iitemize}
	\item $\omega_j(\alpha_i) = \alpha_{\sigma_j(i)}$ for all $i \in I$;
	\item $n_i = n_{\sigma_j(i)}$ for all $i \in I$;
	\item $\sigma_j(0) = j$.
\end{iitemize}
Furthermore, by definition, we have
\begin{align}
	\omega_j^t(\varpivee_i) = \varpivee_{\sigma_j^t(i)} - n_i\varpivee_{\sigma_j^t(0)} \label{prop3.4}
\end{align}
for all $t \in \mathbb{Z}$.

Define a set
\begin{align}
	\OMEGA \ceq \bigset{\omega_j \in W}{j \in J},
\end{align}
and let $\pi : \widehat{W_\aff} \lra W$ be the projection.
Then $\pi(\widehat{\OMEGA}) = \OMEGA$.
Hence we can see that $\OMEGA$ is a subgroup of $W$ isomorphic to $\widehat{\OMEGA}$.

\begin{proposition}[{\cite[Lemma 2.2.5, Corollary 2.2.7]{Garnier}}]
	The group $\OMEGA$ acts on $\DELTA \cup \{\alpha_0\}$ and can be regarded as a subgroup of automorphisms of the extended Dynkin diagram.
\end{proposition}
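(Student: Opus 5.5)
The plan is to derive everything from the facts quoted from Garnier and Komrakov--Premet just before the statement: $\omega_j(\alpha_i)=\alpha_{\sigma_j(i)}$ for all $i\in I$ (including $i=0$), and $\pi(\widehat{\OMEGA})=\OMEGA$. First I would check that $\OMEGA$ is a subgroup of $W$. The restriction of $\pi$ to $\widehat{\OMEGA}$ is injective. Indeed, an element of its kernel is a translation $t_z$ with $z\in Z$ and $t_z(A_\circ)=A_\circ$. Since $A_\circ$ is bounded, this forces $z=0$. Hence $\OMEGA=\pi(\widehat{\OMEGA})$ is the image of a group under a homomorphism, so it is a subgroup of $W$ isomorphic to $\widehat{\OMEGA}$.

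Next I would show that $\OMEGA$ permutes $\DELTA\cup\{\alpha_0\}$. The relation $\omega_j(\alpha_i)=\alpha_{\sigma_j(i)}$ with $\sigma_j$ a permutation of $I$ says exactly that $\omega_j$ maps the finite set $\DELTA\cup\{\alpha_0\}$ bijectively onto itself. If one wants to see the case $i=0$ directly, argue as follows. The element $\widehat{\omega_j}$ is an affine isometry preserving $A_\circ$, so it permutes the walls of $A_\circ$, which are $H_{\alpha_i}^0$ for $i\geq 1$ and $H_{\tilde\alpha}^1$. It also preserves the side of each wall on which $A_\circ$ lies. Its linear part $\omega_j$ therefore sends the inward normals $\alpha_1,\dots,\alpha_\ell,\alpha_0=-\tilde\alpha$ to positive multiples of inward normals. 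Since $\omega_j\in W$ maps roots to roots, and these are all roots of the reduced system $\PHI$, the multiples are exactly $1$. Because the assignment $\omega\mapsto(\text{its permutation of }\DELTA\cup\{\alpha_0\})$ comes from the linear action of $W$ on $E$, it is a group homomorphism $\OMEGA\to\mathfrak{S}(\DELTA\cup\{\alpha_0\})$.

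To see that the image consists of diagram automorphisms, I would use that $\omega_j$ is orthogonal. Then $(\omega_j\alpha_i,\omega_j\alpha_k)=(\alpha_i,\alpha_k)$ for all $i,k\in I$. Hence the Cartan integers $2(\alpha_i,\alpha_k)/(\alpha_k,\alpha_k)$ are invariant under $\sigma_j$. The extended Dynkin diagram is determined by these integers (number of edges, arrow direction from relative lengths), so $\sigma_j$ is an automorphism of it. Finally the homomorphism is injective: if $\sigma_j=\mathrm{id}$, then $\omega_j$ fixes every $\alpha_i$ with $i\geq 1$, which span $E$, so $\omega_j=1$. Thus $\OMEGA$ embeds in the automorphism group of the extended Dynkin diagram.

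The only delicate point is the root $\alpha_0$, i.e.\ ruling out that $\omega_j$ rescales or fails to match a wall normal. I would handle it through the wall-permutation argument above, or simply cite the bullet $\omega_j(\alpha_i)=\alpha_{\sigma_j(i)}$ for $i\in I$ from \cite[Lemma 2.2.5]{Garnier}.
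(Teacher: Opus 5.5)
Your proof is correct, and it takes a different route from the paper only in the sense that the paper gives no argument at all: it cites Garnier (Lemma 2.2.5, Corollary 2.2.7). The relation $\omega_j(\alpha_i)=\alpha_{\sigma_j(i)}$ that your first route rests on is likewise only quoted there, from Garnier and Komrakov--Premet. So that route amounts to the paper's citation-based one.

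Your wall-normal argument is a self-contained alternative. It gets the permutation of $\Delta\cup\{\alpha_0\}$ from two facts only:
\begin{itemize}
\item $\widehat{\Omega}$ stabilizes $A_\circ$, and an affine isometry preserving the open simplex sends inward wall normals to positive multiples of inward wall normals;
\item $\Phi$ is reduced, which forces each multiple to be $1$.
\end{itemize}
It never uses the explicit form $\widehat{\omega_j}=t_{\varpi_j^\vee}\circ w_jw_0$. Orthogonality then makes the Cartan integers $\sigma$-invariant, and since $\alpha_1,\dots,\alpha_\ell$ span $E$ the action is faithful. So you obtain a genuine embedding of $\Omega$ into the automorphism group of the extended Dynkin diagram.

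If you want your wall permutation to coincide with the paper's $\sigma_j$, which is defined through the vertices $\varpi_i^\vee/n_i$, note that $\varpi_i^\vee/n_i$ is the unique vertex of $\overline{A_\circ}$ not lying on the $i$-th wall. An affine bijection of the simplex preserves this incidence, so the two permutations agree.

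What the citation gives the paper beyond your argument is the extra bookkeeping it uses later, such as $\sigma_j(0)=j$, which genuinely needs the explicit description of $\widehat{\omega_j}$. By contrast, $n_{\sigma_j(i)}=n_i$ would also follow from your setup. It comes from the fact that $\sum_{i\in I}n_i\alpha_i=0$ is, up to scalar, the only linear relation among $\alpha_0,\dots,\alpha_\ell$.
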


We can see that the stabilizer subgroup $W_{A_\circ}$ under the action $\rho^\diamond$ is exactly equal to $\OMEGA$:
\begin{proposition}\label{prop4.6}
	$W_{A_\circ} = \OMEGA = \tbigset{\omega_j \in W}{j \in J}$.
\end{proposition}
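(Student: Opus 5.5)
We prove the two inclusions $\OMEGA \subseteq W_{A_\circ}$ and $W_{A_\circ} \subseteq \OMEGA$, working throughout with the description \cref{zwx} of $\rho^\diamond$: for $w \in W$ and $x \in P^\diamond$, $\rho^\diamond(w)(x) = w(x) + z_{w,x}$ with $z_{w,x} \in Z$. Hence $\rho^\diamond(w)$ agrees with $\pi_T \circ w$ read in the fundamental parallelepiped. The key observation is that $\rho^\diamond(w)(A_\circ) = A_\circ$ holds if and only if there is some $z \in Z$ with $t_z \circ w(A_\circ) = A_\circ$. This is the same as saying that $w$ lies in the image under $\pi$ of $\widehat{\OMEGA}$.

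For the inclusion $\OMEGA \subseteq W_{A_\circ}$, let $j \in J$. Since $\widehat{\omega_j} = t_{\varpivee_j}\circ\omega_j$ stabilizes $A_\circ$ and $A_\circ \subseteq P^\diamond$, each $x \in A_\circ$ satisfies $\omega_j(x) + \varpivee_j \in A_\circ \subseteq P^\diamond$. By uniqueness of $z_{w,x}$ we get $z_{\omega_j,x} = \varpivee_j$ and $\rho^\diamond(\omega_j)(x) = \widehat{\omega_j}(x)$. Therefore $\rho^\diamond(\omega_j)(A_\circ) = \widehat{\omega_j}(A_\circ) = A_\circ$, so $\omega_j \in W_{A_\circ}$.

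For the reverse inclusion, let $w \in W_{A_\circ}$. The map $\rho^\diamond(w)$ is piecewise $x \mapsto w(x) + z_{w,x}$, so a priori different points of $A_\circ$ could be translated by different lattice vectors. The main obstacle is to show that $z_{w,x}$ is constant on $A_\circ$. Here we use that $w$ permutes the hyperplanes of $\A^\aff_\PHI$ (since $w(\PHI) = \PHI$), and so do translations by $Z$ (since $(\beta,z) \in \mathbb{Z}$ for $z \in Z$). Consequently, for every $z \in Z$, the set $w(A_\circ) + z$ is again an alcove. The alcoves $w(A_\circ) + z$ with $z \in Z$ are pairwise disjoint, because distinct translates of an alcove by $Z$ are distinct alcoves. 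Moreover $A_\circ$ is connected and avoids all hyperplanes, so it lies in a single such translate. Writing $\rho^\diamond(w)(A_\circ) = \bigsqcup_z \bigl( (w(A_\circ)+z) \cap P^\diamond \bigr)$ and using that this set equals the alcove $A_\circ$, connectedness forces a single $z$ with $w(A_\circ) + z = A_\circ$.

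An alternative route to the same conclusion is available. The alcove $w(A_\circ)$ meets $P^\diamond - z$ in an open set, and $(\pi_T)(w(A_\circ))$ is a connected subset of $T(\A_\PHI)$, so it lies in the image of a single alcove of $\mathcal{C}^\diamond$. That alcove must be $A_\circ$, which again gives $z$ with $w(A_\circ) + z = A_\circ$. The step to verify is that $\pi_T$ restricted to alcoves meeting $P^\diamond$ identifies each alcove of $\mathcal{C}^\diamond$ with a unique $Z$-orbit of alcoves. This holds because alcoves are contained in the open cells $P^\diamond$ modulo the walls $(\alpha_i,x) \in \mathbb{Z}$, which are themselves hyperplanes of $\A^\aff_\PHI$.

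Given $w(A_\circ) + z = A_\circ$, the element $t_z \circ w \in \widehat{W_\aff}$ stabilizes $A_\circ$, so $t_z \circ w \in \widehat{\OMEGA} = \{\widehat{\omega_j}\}_{j \in J}$. Applying the projection $\pi$ gives $w = \omega_j$ for some $j \in J$. This proves $W_{A_\circ} \subseteq \OMEGA$, and together with the first inclusion, $W_{A_\circ} = \OMEGA$.
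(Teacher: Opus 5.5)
Your proof is correct and follows the paper's argument. For $\OMEGA \subseteq W_{A_\circ}$ both use that $\widehat{\omega_j} = t_{\varpivee_j}\circ\omega_j$ stabilizes $A_\circ \subseteq P^\diamond$; for the converse both produce $z \in Z$ with $w(A_\circ)+z = A_\circ$ and then invoke $\widehat{\OMEGA} = \{\widehat{\omega_j}\}_{j\in J}$. The paper only asserts the existence of $z$ (``it follows from the above''), whereas you justify it via $\rho^\diamond(w)(A_\circ) = \bigsqcup_{z}\bigl((w(A_\circ)+z)\cap P^\diamond\bigr)$. Connectedness is not even needed there: $A_\circ$ meets some alcove $w(A_\circ)+z$, and two alcoves that meet coincide.
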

\begin{proof}
	Let $\omega_j \in \OMEGA$.
	Since $\omega_j(A_\circ) = A_\circ - \varpivee_j$, we have 
	\begin{align}
		\rho^\diamond(\omega_j)(A_\circ) = ((\pi_T^\diamond)^{-1} \circ \pi_T \circ \omega_j)(A_\circ) = (\pi_T^\diamond)^{-1}(\pi_T(A_\circ)) = A_\circ.
	\end{align}
	Hence $\omega_j \in W_{A_\circ}$.
	
	Suppose that $w \in W_{A_\circ}$.
	Then
	\begin{align}
		((\pi_T^\diamond)^{-1} \circ \pi_T \circ w)(A_\circ) = \rho^\diamond(w)(A_\circ) = A_\circ.
	\end{align}
	It follows from the above that there exists $z \in Z$ such that $w(A_\circ) = A_\circ - z$.
	Therefore $t_z \circ w$ belongs to $\widehat{W_\aff}$.
	Hence there exists $j \in J$ such that $w = \omega_j$ and $z = \varpivee_j$, that is, $w \in \OMEGA$.
\end{proof}

Let $x = \sum_{i=0}^\ell x_i\varpivee_i \in P^\diamond$, where we assume that $x_0 = 1$.
Let $\omega \ceq \omega_j \in \OMEGA = W_{A_\circ}$.
By \cref{prop3.4}, we have
\begin{align}
	\omega(x) = \sum_{i=0}^\ell x_i\omega(\varpivee_i) = \sum_{i=0}^\ell x_i\varpivee_{\sigma_j(i)} - \sum_{i=0}^\ell x_in_i\varpivee_j.
\end{align}
Since $\sum_{i=0}^\ell x_i\varpivee_{\sigma_j(i)} \in P^\diamond$, the element $z_{\omega,x}$ defined in \cref{zwx} is as follows:
\begin{align}
	z_{\omega,x} = \sum_{i=0}^\ell x_in_i\varpivee_j. \label{zwxx}
\end{align}

\subsection{Duality of the equivariant version of characteristic quasi-polynomials of root systems}\ \label{sec4.3}\label{sec3.4}

\noindent
We will show the equivariant version of \cref{thm2.2} \cref{thm2.2.3}.

For $w \in W$, let $r(w)$ be the dimension of the subspace of $E$ fixed by $w$.
Then $\ell - r(w)$ is equal to the minimum number of reflections required to express $w$ as a product of reflections.
Define a function $\delta : W \lra \mathbb{C}$ by 
\begin{align}
	\delta(w) \ceq (-1)^{\ell - r(w)}.
\end{align}
Then we can see that $\delta$ is a characer of $W$.

The following is  one of the main results of this paper.
\begin{theorem}\label{thm4.6}
	Let $\PHI$ be an irreducible reduced root system with Coxeter number $h$.
	Then
	\begin{align}
		\chi_{\PHI,q} = (-1)^\ell \cdot \delta \cdot \chi_{\PHI,h-q}.
	\end{align}
\end{theorem}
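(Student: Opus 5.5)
The plan is to reduce the statement to the fundamental alcove using \cref{thm4.2}, and there to combine a shift by $h$ with Ehrhart reciprocity applied to the fixed-point polytope.

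\textbf{Reduction to $W_{A_\circ}$.} Since $\delta$ is a one-dimensional character of $W$, the induced-character formula gives $\delta\cdot\Ind^W_K\chi=\Ind^W_K(\delta|_K\cdot\chi)$ for any subgroup $K$ and class function $\chi$ on $K$. Here $K=W_{A_\circ}=\OMEGA$ by \cref{prop4.6}. Combining this with \cref{thm4.2}, it suffices to show that for every $\omega=\omega_j\in\OMEGA$,
\begin{align}
	\chi_{A_\circ,q}(\omega) = (-1)^\ell\,\delta(\omega)\,\chi_{A_\circ,h-q}(\omega).
\end{align}
For $h-q\le 0$, the right-hand side is read as the evaluation of the constituent quasi-polynomial, exactly as in \cref{thm2.2} \cref{thm2.2.3}.

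\textbf{Coordinates and the shift by $h$.} For $x\in Z$, set $c_i=(\alpha_i,x)$ for $1\le i\le\ell$ and $c_0=q-(\tilde\alpha,x)$. This identifies $qA_\circ\cap Z$ with the integer tuples $(c_0,\dots,c_\ell)$ satisfying $c_i\geq 1$ and $\sum_{i\in I}n_ic_i=q$. The closed dilate $q\overline{A_\circ}\cap Z$ corresponds to the same equation with $c_i\ge 0$.

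By \cref{zwxx} and the listed properties of $\sigma_j$ (namely $\omega_j(\alpha_i)=\alpha_{\sigma_j(i)}$ and $n_{\sigma_j(i)}=n_i$), the action $\rho^\diamond(\omega)$ on $qA_\circ$ is the permutation $c_i\mapsto c_{\sigma_j^{-1}(i)}$ of coordinates. The same holds on $q\overline{A_\circ}$ for the induced affine action.

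Since $\sum_i n_i=h$, the substitution $c_i\mapsto c_i-1$ is a bijection from $qA_\circ\cap Z$ onto $(q-h)\overline{A_\circ}\cap Z$. It commutes with permutations of coordinates, so it is $\omega$-equivariant. Taking fixed points gives
\begin{align}
	\chi_{A_\circ,q}(\omega)=\Ell_{\overline{A_\circ}^{\,\omega}}(q-h).
\end{align}

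\textbf{Reciprocity on the fixed polytope.} The fixed set $\overline{A_\circ}^{\,\omega}$ is the closed rational polytope in which the coordinates $c_i$ are constant on $\sigma_j$-orbits. Its relative interior is exactly $A_\circ^\omega$, and its dimension is $m-1$, where $m$ is the number of $\sigma_j$-orbits on $I$. Ehrhart reciprocity for this rational polytope (valid as an identity of quasi-polynomials) gives
\begin{align}
	\Ell_{\overline{A_\circ}^{\,\omega}}(q-h)=(-1)^{m-1}\Ell_{A_\circ^\omega}(h-q)=(-1)^{m-1}\chi_{A_\circ,h-q}(\omega).
\end{align}

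\textbf{Identifying the sign.} It remains to show $m-1=r(\omega)$, which yields $(-1)^{m-1}=(-1)^\ell\delta(\omega)$. Consider the permutation representation of $\omega$ on $\mathbb{R}^{I}$, with basis indexed by $\alpha_0,\dots,\alpha_\ell$. The map $\mathbb{R}^I\to E$ sending basis vectors to the $\alpha_i$ is surjective and $\omega$-equivariant. Its kernel is the line spanned by $(n_i)_i$, which is fixed because $n_{\sigma_j(i)}=n_i$. Over $\mathbb{R}$, with $\omega$ of finite order, taking invariants is exact. Hence $\dim E^\omega=\dim(\mathbb{R}^I)^\omega-1=m-1$.

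\textbf{Main obstacle.} I expect the delicate point to be bookkeeping rather than substance. One must check that the coordinate description of $\rho^\diamond(\omega)$ really is the permutation $\sigma_j$ on the closed alcove, so that it is compatible with the shift bijection. One must also check that reciprocity is applied to the fixed polytope with respect to the correct lattice, namely the $\omega$-fixed part of $Z$ in these coordinates. Finally, the extension of both sides to nonpositive arguments must be justified via the constituents, which is the same convention as in \cref{thm2.2}.
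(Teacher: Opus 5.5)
Your proposal is correct and follows essentially the same route as the paper: reduce via the induction formula of \cref{thm4.2} to $\omega\in W_{A_\circ}$, restrict Yoshinaga's shift $x\mapsto x-\sum_{i}\varpivee_i$ (which is exactly your substitution $c_i\mapsto c_i-1$) to $\omega$-fixed points, and apply Ehrhart reciprocity to the fixed polytope $\overline{A_\circ^\omega}$ of dimension $r(\omega)$. The differences are only presentational: your barycentric coordinates turn $\rho^\diamond(\omega)$ into a coordinate permutation, which makes equivariance of the shift immediate, whereas the paper checks it by direct computation from $\omega_j(\sum_i\varpivee_i)=\sum_i\varpivee_i-h\varpivee_j$; and your orbit-count argument explicitly proves $\dim A_\circ^\omega=r(\omega)$, which the paper uses without comment.
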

\begin{proof}
	For each $q \in \mathbb{Z}_{>0}$, let $F_0,F_1,\ldots,F_\ell$ be hyperplanes defined by
	\begin{align}
		F_0 \ceq H_{\tilde{\alpha}}^q,\quad 
		F_1 \ceq H_{\alpha_1}^0,\quad \ldots,\quad F_\ell \ceq H_{\alpha_\ell}^0.
	\end{align}
	Then each $F_i$ is a wall of $q\overline{A_\circ}$, that is, one of the facets of $q\overline{A_\circ}$ is included in $F_i$.
	
	To prove \cref{thm2.2} \cref{thm2.2.3}, for $q > h$, Yoshinaga gave the bijection $\varphi : (q\overline{A_\circ} \cap Z)\setminus \bigcup_{i=0}^\ell F_i \lra (q - h)\overline{A_\circ} \cap Z$ defined as follows \cite[Lemma 3.3]{Yoshinaga}:
	\begin{align}
		\varphi : x \lmapsto x - \sum_{i = 0}^\ell \varpivee_i.
	\end{align}
	For any $w \in W_{A_\circ}$, we consider the restriction $\varphi|_w$ of $\varphi$ to $(q\overline{A_\circ^w} \cap Z)\setminus \bigcup_{i=0}^\ell F_i$.
	
	\begin{lemma}\label{lem3.8}
		Let $w \in W_{A_\circ}$.
		Then 
		\begin{align}
			\varphi|_w\left( (q\overline{A_\circ^w} \cap Z)\setminus \bigcup_{i=0}^\ell F_i \right) = (q - h)\overline{A_\circ^w} \cap Z.
		\end{align}
		Hence $\varphi|_w : (q\overline{A_\circ^w} \cap Z)\setminus \bigcup_{i=0}^\ell F_i \lra (q - h)\overline{A_\circ^w} \cap Z$ is a bijection.
	\end{lemma}
	\begin{proof}[Proof of \cref{lem3.8}]
		Suppose that $w = \omega_j$.
		Let $x \in (q\overline{A_\circ^w} \cap Z)\setminus \bigcup_{i=0}^\ell F_i$, then we can set $x = qx'$ for $x' \in \overline{A_\circ^w}$.
		Since $w(A_\circ) + \varpivee_j = \widehat{\omega_j}(A_\circ) = A_\circ$, we can see that $z_{w,x'} = \varpivee_j$, that is, $x' = \rho^\diamond(w)(x') = w(x') + \varpivee_j$.
		Using \cref{zwxx}, we have
		\begin{align}
			w\left(\sum_{i=0}^\ell\varpivee_i\right) = \sum_{i=0}^\ell \varpivee_{\sigma_j(i)} - \sum_{i=0}^\ell n_i \varpivee_j = \sum_{i=0}^\ell \varpivee_i - h \varpivee_j.
		\end{align}
		Hence
		\begin{align}
			w\left( \frac{1}{q-h}\varphi|_w(x) \right) 
			&= \frac{1}{q-h}\left(q \cdot w(x') - w\left( \sum_{i=0}^\ell \varpivee_i \right)\right)\\
			&= \frac{1}{q-h}\left( x - q\varpivee_j - \sum_{i=0}^\ell \varpivee_j + h\varpivee_j \right)\\
			&= \frac{1}{q-h}\varphi|_w(x) - \varpivee_j.
		\end{align}
		The above implies that 
		\begin{align}
			\rho^\diamond(w)\left( \frac{1}{q-h}\varphi|_w(x) \right) = w\left( \frac{1}{q-h}\varphi|_w(x) \right) + \varpivee_j = \frac{1}{q-h}\varphi|_w(x).
		\end{align}
		Thus we have $\varphi|_w(x) \in (q-h)\overline{A_\circ^w} \cap Z$.
		
		Conversely, let $y \in (q-h)\overline{A_\circ^w} \cap Z$, and $y' \ceq \frac{y}{q-h}$.
		It is clear that $\varphi|_w(\varphi^{-1}(y)) = y$.
		Since $y' \in \overline{A_\circ^w}$, we have $y' = \rho^\diamond(w)(y') = w(y') + \varpivee_j$.
		Therefore
		\begin{align}
			w\left( \frac{1}{q}\varphi^{-1}(y) \right) 
			&= \frac{1}{q}\left( (q-h) \cdot w(y') + w\left( \sum_{i=0}^\ell \varpivee_i \right) \right)\\
			&= \frac{1}{q}\left( y - (q-h)\varpivee_j + \sum_{i=0}^\ell \varpivee_j - h\varpivee_j \right)\\
			&= \frac{1}{q}\varphi^{-1}(y) - \varpivee_j.
		\end{align}
		The above implies that 
		\begin{align}
			\rho^\diamond(w)\left( \frac{1}{q}\varphi^{-1}(y) \right) = w\left( \frac{1}{q}\varphi^{-1}(y) \right) + \varpivee_j = \frac{1}{q}\varphi^{-1}(y).
		\end{align}
		Thus we have $\varphi^{-1}(y) \in (q\overline{A_\circ^w} \cap Z) \setminus \bigcup_{i=0}^\ell F_i$.
	\end{proof}
	
	The bijection $\varphi|_w$ implies that 
	\begin{align}
		\#\left( (q\overline{A_\circ^w} \cap Z)\setminus \bigcup_{i=0}^\ell F_i \right)
		&= \#\Bigl( (q-h)\overline{A_\circ^w} \cap Z \Bigr).
	\end{align}
	Hence we have
	\begin{align}
		\chi_{A_\circ,q}(w) &= \Ell_{A^w_\circ}(q)\\
		&= \#\left( (q\overline{A_\circ^w} \cap L)\setminus \bigcup_{i=0}^\ell F_i \right)\\
		&= \#\Bigl( (q-h)\overline{A_\circ^w} \cap L \Bigr)\\
		&= \Ell_{\overline{A_\circ^w}}(q-h)\\
		&= (-1)^{ r(w)} \cdot \Ell_{A_\circ^w}(h-q)\\
		&= (-1)^{ r(w)} \cdot \chi_{A_\circ,h-q}(w).
	\end{align}
	for all $w \in W_{A_\circ}$.
	Therefore
	\begin{align}
		\chi_{\PHI,q}(w) &= \Ind^W_{W_{A_\circ}} \chi_{A_\circ,q}(w)\\
		&= \dfrac{1}{\#W_{A_\circ}} \sum_{\substack{u \in W\\ u^{-1}wu \in W_{A_\circ}}} \chi_{A_\circ,q}(u^{-1}wu)\\
		&= \dfrac{1}{\#W_{A_\circ}} \sum_{\substack{u \in W\\ u^{-1}wu \in W_{A_\circ}}} (-1)^{ r(w)} \cdot \chi_{A_\circ,h-q}(u^{-1}wu)\\
		&= (-1)^{ r(w)} \cdot \Ind^W_{W_{A_\circ}}\chi_{A_\circ,h-q}(w)\\
		&= (-1)^\ell \cdot \delta(w) \cdot \chi_{\PHI,h-q}(w)
	\end{align}
	for all $w \in W$.
	Now, the proof of \cref{thm4.6} is complete.
\end{proof}

\section{Relationship with root systems constructed by folding} \label{sec4}
\subsection{Decomposition using elementary divisors}\ \label{sec4.1}

\noindent
For $w \in W$, let $T^w$ and $T[q]^w$ denote the set of points of $T$ and $T[q]$ fixed by $\rho_T(w)$, respectively.
Then 
\begin{align}
	\pi_T^{-1}(T^w) = \bigset{x \in E}{(w - \id)(x) = w(x) - x \in Z}.
\end{align}
Let $R_w$ be the representation matrix of $w$, and $I$ denote the identity matrix.
In the elementary divisors theory, there exist $\ell \times \ell$ unimodular matrices $U,V$ and non-negative integers $d_1,\ldots,d_\ell \in \mathbb{Z}_{\geq 0}$ such that 
\begin{align}
	U(R_w-I)V^{-1} = \diag(d_1,\ldots,d_\ell).
\end{align}
Note that $r(w)$ defined in \cref{sec4.3} is equal to the number of $d_i$'s that are zero.
If $d_1,\ldots,d_\ell$ satisfy
\begin{align}
	d_{i_1},\ldots, d_{i_{\ell - r(w)}} \neq 0,\quad 
	d_{i_1} \mid d_{i_2} \mid \cdots \mid d_{i_{\ell - r(w)}},\quad
	d_{i_{\ell - r(w)+1}} = \cdots = d_{i_{\ell}} = 0,
\end{align}
then $\{d_{i_1},\ldots,d_{i_{\ell - r(w)}}\}$ is called the \textbf{elementary divisors} of the matrix $R_w - I$.
For each $i \in \{1,\ldots,\ell\}$, define $u_i,v_i \in Z$ as 
\begin{align}
	u_i \ceq \sum_{j=1}^\ell u_{ij}\varpivee_j,\qquad
	v_i \ceq \sum_{j=1}^\ell v_{ij}\varpivee_j,
\end{align}
where $U = (u_{ij})_{ij}$ and $V = (v_{ij})_{ij}$.
Then we can see that
\begin{align}
	w(u_i) = u_i + d_iv_i
\end{align}
for all $i \in \{1,\ldots,\ell\}$.

\begin{lemma}[{\cite[Lemma 3.2, Theorem 3.3]{Uchiumi}}]
	Using the notation above, we have
	\begin{align}
		\pi_T^{-1}(T^w) = \bigoplus_{\substack{1 \leq i \leq \ell\\d_i \neq 0}}d_i^{-1}\mathbb{Z}u_i \oplus \bigoplus_{\substack{1 \leq i \leq \ell\\ d_i = 0}}\mathbb{R}u_i.
	\end{align}
	Hence, for $q \in \mathbb{Z}_{>0}$,
	\begin{align}
		T^w = \Bigset{ \sum_{\substack{1 \leq i \leq \ell\\ d_i \neq 0}}\pi_T\left(\frac{a_i}{d_i}u_i\right) + \sum_{\substack{1 \leq i \leq \ell\\ d_i = 0}}\pi_T( b_iu_i )}{a_i \in \mathbb{Z},\ b_i \in \mathbb{R}}.
	\end{align}
\end{lemma}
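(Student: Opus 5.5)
Write $R \ceq R_w - I$, an integer matrix in the basis $\DELTA^\vee$ of $Z$. Coordinates are row vectors, so $x = z\,(\varpivee_1,\ldots,\varpivee_\ell)^{\mathsf T}$ corresponds to $z \in \mathbb{R}^\ell$ and $(w-\id)(x)$ corresponds to $zR$. The plan is to turn the condition defining $\pi_T^{-1}(T^w)$, namely $zR \in \mathbb{Z}^\ell$, into a diagonal condition by a change of variables using the Smith form $URV^{-1} = D \ceq \diag(d_1,\ldots,d_\ell)$. Since $U,V$ are unimodular, set $z = yU$ with $y \in \mathbb{R}^\ell$; this is a bijection of $\mathbb{R}^\ell$ that preserves $\mathbb{Z}^\ell$. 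Then
\begin{align}
	zR = yUR = yDV,
\end{align}
and $yDV \in \mathbb{Z}^\ell$ holds iff $yD \in \mathbb{Z}^\ell$, because $V$ is unimodular. The condition therefore splits coordinatewise: $y_id_i \in \mathbb{Z}$ for every $i$. This means $y_i \in d_i^{-1}\mathbb{Z}$ when $d_i \neq 0$, and $y_i \in \mathbb{R}$ is arbitrary when $d_i = 0$.

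Next I translate back. Since $z = yU = \sum_i y_i (u_{i1},\ldots,u_{i\ell})$, the corresponding point is $x = \sum_i y_i u_i$, where $u_i = \sum_j u_{ij}\varpivee_j$ is the $i$-th row of $U$ read as an element of $Z$. The $u_i$ form a $\mathbb{Z}$-basis of $Z$ and hence an $\mathbb{R}$-basis of $E$, so the decomposition is a direct sum and
\begin{align}
	\pi_T^{-1}(T^w) = \bigoplus_{d_i \neq 0} d_i^{-1}\mathbb{Z}u_i \oplus \bigoplus_{d_i = 0}\mathbb{R}u_i .
\end{align}
The same computation applied to $y = e_i$ gives $(w-\id)(u_i) = d_i v_i$, which is the identity $w(u_i) = u_i + d_iv_i$ recorded before the lemma. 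Applying $\pi_T$ then gives the stated description of $T^w$: each point is $\sum \pi_T(\frac{a_i}{d_i}u_i) + \sum \pi_T(b_iu_i)$ with $a_i \in \mathbb{Z}$ and $b_i \in \mathbb{R}$.

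The main point needing care is bookkeeping rather than mathematics. One must check that the convention for $R_w$ (rows versus columns) is consistent with how $U$ acts, so that $u_i$ really is the $i$-th row of $U$ and $zR = yDV$ holds rather than a transposed version. Once the convention matches the one used to define $u_i$ and $v_i$, everything else is immediate.
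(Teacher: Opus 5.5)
Your proof is correct and is essentially the paper's route: the paper gives no proof here and only cites \cite{Uchiumi}, but its setup (the Smith form $U(R_w-I)V^{-1}=\diag(d_1,\ldots,d_\ell)$ together with $w(u_i)=u_i+d_iv_i$) is exactly your change of variables $z=yU$ that splits the condition $w(x)-x\in Z$ coordinatewise. Your consistency check is also right: the identity $w(u_i)=u_i+d_iv_i$ holds only in the row-vector convention, so that convention is the one the paper uses, and this settles the bookkeeping issue you raised.
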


Let $E^w$ denote the set of points of $E$ fixed by $w$.
Then
\begin{align}
	E^w = \bigset{x \in E}{w(x) = x} = \bigoplus_{\substack{1 \leq i \leq \ell\\ d_i = 0}}\mathbb{R}u_i
\end{align}
since $\{u_1,\ldots,u_\ell\}$ forms a basis for $E$.
Define a sublattice $M$ of $Z$ by
\begin{align}
	M \ceq \bigoplus_{\substack{1 \leq i \leq \ell\\ d_i = 0}}\mathbb{Z}u_i, \label{M}
\end{align}
and the torus $T_M \ceq E^w/M$.
Then the natural projection from $E^w$ onto $T_M$ is equal to the restriction of $\pi_T$ to $E^w$.
Moreover, define a finite set
\begin{align}
	\XI \ceq \Bigset{\sum_{\substack{1 \leq i \leq \ell\\ d_i > 1}}\frac{a_i}{d_i}u_i \in E}{a_i \in \{1,\ldots,d_i\}}. \label{XI}
\end{align}
For each $\xi \in \XI$, define a map $\epsilon_\xi : \mathbb{Z}_{>0} \lra \{0,1\}$ by
\begin{align}
	\epsilon_\xi(q) = \begin{cases*}
		0 & if $\pi_T(\xi) \not\in T[q]$;\\
		1 & if $\pi_T(\xi) \in T[q]$.
	\end{cases*}
\end{align}
Then we can see that
\begin{align}
	T^w = \bigsqcup_{\xi \in \XI}\bigset{\pi_T(\xi) + t}{t \in T_M},
\end{align}
and 
\begin{align}
	T[q]^w = \bigsqcup_{\substack{\xi \in \XI\\ \epsilon_\xi(q) = 1}}\bigset{\pi_T(\xi) + t}{t \in T_M[q]}
\end{align}
for all $q \in \mathbb{Z}_{>0}$.
Hence we have the following \cite[Equation (3.5)]{Uchiumi}:
\begin{align}
	\chi_{\PHI,q}(w) = \#(T(\A_\PHI) \cap T[q]^w) = \sum_{\xi \in \XI}\epsilon_\xi(q) \cdot \#\bigset{\pi_T(x) \in T_M[q]}{(\beta,\xi) + (\beta,x) \not\in \mathbb{Z} \tforall \beta \in \PHI^+}.
\end{align}
Note that the terms on the right-hand side do not form the characteristic quasi-polynomials of non-central arrangements.

\subsection{Space of fixed points}\ \label{sec4.2}

\noindent
In this subsection, find the integers $d_1,\ldots,d_\ell$ and the basis $\{u_1,\ldots,u_\ell\}$ for each $w \in W_{A_\circ}$.
Recall that $W_{A_\circ} = \OMEGA$ (\cref{prop4.6}).
For $j \in J \setminus \{0\}$, let $\omega \ceq \omega_j \in \OMEGA$ and $\sigma \ceq \sigma_j$.
Let $S_0^j,\ldots,S_r^j$ denote all $\langle\sigma\rangle$-orbits in $I$, that is, $I = S_0^j \sqcup \cdots \sqcup S_r^j$, where we assume that $0 \in S_0^j$.
Let $o(\omega)$ denote the order of $\omega$.
Then we can see that $\#S_0^j = o(\omega)$.
For $k \in \{0,\ldots,r\}$, define 
\begin{align}
	s_k \ceq \min{S_k^j},\qquad m_k \ceq \frac{\#S_k^jn_{s_k}}{\#S_0^j}.
\end{align}
It follows from \cite[Corollary 4.2]{UchiumiRoot} that $m_k \in \mathbb{Z}$ for all $k \in \{0,\ldots,r\}$.

For $x \in E$, define $x^\omega \in E$ by
\begin{align}
	x^\omega \ceq \frac{1}{o(\omega)}\sum_{t=1}^{o(\omega)}\omega^t(x).
\end{align}
It is clear that $x^\omega \in E^\omega$.
For $k \in \{0,\ldots,r\}$, define
\begin{align}
	\pi_k^j \ceq \sum_{s \in S_k^j}\varpivee_s,\qquad
	\bar{\pi}_k^\omega \ceq (\pi_k^j)^\omega = \pi_k^j - m_k\pi_0^j.
\end{align}

\begin{theorem}[{\cite[Theorem 3.4]{UchiumiRoot}}]
	$\{\bar{\pi}_1^\omega,\ldots,\bar{\pi}_r^\omega\}$ is a basis for $E^\omega$.
	Hence $r = r(\omega)$.
\end{theorem}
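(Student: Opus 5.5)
We need to prove that $\{\bar{\pi}_1^\omega,\ldots,\bar{\pi}_r^\omega\}$ is a basis of $E^\omega$, and hence that $r = r(\omega)$. The plan has three steps: first, the vectors lie in $E^\omega$ and span it; second, they are linearly independent; third, the dimension count gives $r=r(\omega)$.

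\textbf{Step 1: membership and spanning.} We work with the affine coordinates of \cref{prop3.4}. Each $\pi_k^j$ is a sum over a whole $\langle\sigma\rangle$-orbit. Applying \cref{prop3.4} termwise gives
\begin{align}
	\omega(\pi_k^j) = \sum_{s\in S_k^j}\varpivee_{\sigma(s)} - \Bigl(\sum_{s \in S_k^j} n_s\Bigr)\varpivee_j = \pi_k^j - \#S_k^j\, n_{s_k}\varpivee_j ,
\end{align}
using $n_s = n_{\sigma(s)}$. Averaging over $\langle\omega\rangle$ and using $\sum_{t}\varpivee_{\sigma^t(0)} = \pi_0^j$ (the orbit $S_0^j$ has size $o(\omega)$) yields $(\pi_k^j)^\omega = \pi_k^j - m_k\pi_0^j$. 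This confirms the stated formula and shows each $\bar\pi_k^\omega$ lies in $E^\omega$.

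For spanning, note that $x\mapsto x^\omega$ is a projection of $E$ onto $E^\omega$. Since $\varpivee_0=0$, the vectors $\varpivee_1,\ldots,\varpivee_\ell$ span $E$. For $s\in S_k^j$, iterating \cref{prop3.4} shows that $\varpivee_s^\omega$ is a combination of $\varpivee_{\sigma^t(s)}$ and $\varpivee_{\sigma^t(0)}$ that is symmetric along the orbit. Hence $\varpivee_s^\omega = \frac{1}{\#S_k^j}(\pi_k^j)^\omega$ up to a multiple of $\pi_0^j$. Because $\pi_0^j$ sums over the orbit of $0$, $(\pi_0^j)^\omega = \pi_0^j - m_0\pi_0^j$ with $m_0 = n_0 = 1$, so $\bar\pi_0^\omega=0$. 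It then remains to check that any multiple of $\pi_0^j$ produced here already lies in $E^\omega$ only as $0$. The cleanest way to do this is to compute $\varpivee_s^\omega$ directly and verify that it equals $\frac{1}{\#S_k^j}\bar\pi_k^\omega$; I expect that averaging \cref{prop3.4} over $t$ gives exactly this.

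\textbf{Step 2: linear independence.} Suppose $\sum_{k=1}^r c_k\bar\pi_k^\omega=0$. Substituting $\bar\pi_k^\omega = \pi_k^j - m_k\pi_0^j$ gives
\begin{align}
	\sum_{k\ge1}c_k\pi_k^j - \Bigl(\sum_k c_km_k\Bigr)\pi_0^j = 0 .
\end{align}
The orbits $S_k^j$ with $k\ge 1$ are disjoint from $S_0^j$, and the vectors $\varpivee_1,\ldots,\varpivee_\ell$ are linearly independent. Pick $k\ge1$ and any $s\in S_k^j$ with $s\neq 0$ (every $s\in S_k^j$ qualifies, since $0\in S_0^j$). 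Reading off the coefficient of $\varpivee_s$ gives $c_k=0$. The only subtle point is that $\pi_0^j$ contains $\varpivee_0=0$, but its remaining terms $\varpivee_s$ with $s\in S_0^j\setminus\{0\}$ involve indices disjoint from those in $\pi_k^j$ for $k\ge 1$, so they do not interfere.

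\textbf{Step 3: dimension.} Steps 1 and 2 show that $\dim E^\omega = r$. By definition $\dim E^\omega = r(\omega)$, so $r=r(\omega)$.

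The main obstacle is the spanning part of Step 1: verifying precisely that $\varpivee_s^\omega$ lies in the span of the $\bar\pi_k^\omega$, which requires tracking the translation terms $-n\varpivee_{\sigma^t(0)}$ under iteration. As a backup, I would count dimensions instead. The linear part of $\omega$ permutes the $\ell+1$ vectors $\alpha_0,\ldots,\alpha_\ell$, which satisfy the single relation $\sum n_i\alpha_i=0$. The dual action on $E$ is therefore the permutation representation on $\mathbb{R}^{I}$ modulo the invariant line spanned by $(n_i)_{i\in I}$, which is $\sigma$-invariant. Its fixed space has dimension equal to the number of orbits minus one, namely $(r+1)-1=r$. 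Combined with the independence from Step 2, this gives the basis without the explicit spanning computation.
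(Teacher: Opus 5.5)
Your proof is correct. The paper does not prove this statement itself; it quotes it from \cite[Theorem 3.4]{UchiumiRoot}. So there is no in-paper argument to compare against. Your write-up is a valid self-contained proof built from exactly the data the paper sets up: \cref{prop3.4}, $n_{\sigma(i)}=n_i$, and $\#S_0^j=o(\omega)$.

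The spanning step you left as an expectation does close, with no leftover multiple of $\pi_0^j$. You need one extra observation. Since $\omega^{o(\omega)}=1$, $\omega(\alpha_i)=\alpha_{\sigma(i)}$, and the $\alpha_i$ ($i\in I$) are pairwise distinct, $\sigma^{o(\omega)}$ is the identity on $I$. Hence every orbit size $\#S_k^j$ divides $o(\omega)$. Averaging \cref{prop3.4} over $t$ then gives, for $s\in S_k^j$,
\[
\varpivee_s^\omega=\frac{1}{o(\omega)}\sum_{t=1}^{o(\omega)}\bigl(\varpivee_{\sigma^t(s)}-n_s\varpivee_{\sigma^t(0)}\bigr)=\frac{1}{\#S_k^j}\,\pi_k^j-\frac{n_s}{o(\omega)}\,\pi_0^j=\frac{1}{\#S_k^j}\,\bar\pi_k^\omega .
\]
In particular $\varpivee_s^\omega=0$ for $s\in S_0^j$. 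So the images of $\varpivee_1,\ldots,\varpivee_\ell$ under the averaging projection are scalar multiples of $\bar\pi_1^\omega,\ldots,\bar\pi_r^\omega$, which is all spanning requires, and the hedge about ``a multiple of $\pi_0^j$'' can be dropped.

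Your independence argument, using disjoint supports in the basis $\varpivee_1,\ldots,\varpivee_\ell$, is fine. Your backup dimension count is also valid once you note that taking invariants of a finite group is exact over $\mathbb{R}$. That gives $\dim E^\omega=\dim(\mathbb{R}^I)^{\langle\sigma\rangle}-1=r$, because the kernel line spanned by $(n_i)_{i\in I}$ is $\sigma$-fixed. No ``dual'' action is needed there, since the $\alpha_i$ already live in $E$.
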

\begin{theorem}
	The elementary divisors of $(R_\omega - 1)$ with respect to $Z$ is $(1^{\ell-r-1},o(\omega))$, where $1^p$ indicates that $p$ is the number of $1$.
\end{theorem}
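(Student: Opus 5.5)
The plan is to compute the abelian group $Z/(\omega-1)Z$, which is determined by the elementary divisors of $R_\omega - I$ (nonzero part), and show that it is cyclic of order $o(\omega)$. Since the rank of $R_\omega - I$ is $\ell - r$ by the preceding theorem, the cokernel $Z/(\omega-1)Z$ has free rank $r$. Its torsion subgroup is isomorphic to $\bigoplus \mathbb{Z}/d_i$. Equivalently, this torsion subgroup is $\bigl(Z \cap ((\omega-1)Z\otimes\mathbb{R})\bigr)/(\omega-1)Z$. So it suffices to show this quotient is cyclic of order $o(\omega)$.

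I would work in the redundant coordinates $\varpivee_0 = 0,\varpivee_1,\ldots,\varpivee_\ell$ and use formula \cref{prop3.4}: $\omega(\varpivee_i) = \varpivee_{\sigma(i)} - n_i\varpivee_j$ with $j=\sigma(0)$. Then $(\omega-1)\varpivee_i = \varpivee_{\sigma(i)} - \varpivee_i - n_i\varpivee_j$.

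Modulo the image $(\omega-1)Z$, this gives the relation $\varpivee_{\sigma(i)} \equiv \varpivee_i + n_i\varpivee_j$. Taking $i=0$ gives nothing new, since $\varpivee_j \equiv 0 + \varpivee_j$. Iterating along each orbit $S_k$ starting at $s_k$ expresses every $\varpivee_s$ with $s\in S_k$ as $\varpivee_{s_k} + c\,\varpivee_j$ with $c \in \mathbb{Z}$. Closing up the orbit gives a relation $\#S_k\, n_{s_k}\varpivee_j \equiv 0$, using $n_i = n_{\sigma(i)}$. For the orbit $S_0$ it reads $o(\omega)\varpivee_j \equiv 0$ (with $\varpivee_0=0$), because $\sum_{s\in S_0}n_s = o(\omega)$ as all $n_s=1$ on $S_0\subseteq J$. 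For the other orbits the relation is $m_k\,o(\omega)\varpivee_j\equiv 0$, which is weaker. Hence the cokernel is generated by $\varpivee_j$ and the $\varpivee_{s_k}$ ($k\ge1$), subject to the single relation $o(\omega)\varpivee_j = 0$. It is therefore $\mathbb{Z}/o(\omega) \oplus \mathbb{Z}^r$, provided the $\varpivee_{s_k}$ stay free and $\varpivee_j$ has order exactly $o(\omega)$.

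To justify that no further relations occur, I would note that $(\omega-1)Z$ is generated by the $(\omega-1)\varpivee_i$, so the relations above are a complete presentation. The cokernel has rank $r$, by the theorem that $\{\bar\pi_k^\omega\}$ is a basis of $E^\omega$. This forces the $r$ generators $\varpivee_{s_k}$ to be free modulo torsion. For the order of $\varpivee_j$, I would use the averaging map $x\mapsto x^\omega$ composed with a functional. The functional $\tilde\alpha$ restricted to $E^\omega$ is a natural choice, or more cleanly the map $Z\to\mathbb{Q}/\mathbb{Z}$ given by $x\mapsto (\tilde\alpha, x^\omega)$ mod $\mathbb{Z}$, which kills $(\omega-1)Z$. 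Since $\varpivee_j^\omega$ is the average over the orbit $S_0$, we get $(\tilde\alpha,\varpivee_j^\omega) = \tfrac{1}{o(\omega)}\sum_{s\in S_0\setminus\{0\}} 1 = \tfrac{o(\omega)-1}{o(\omega)}$, which has exact order $o(\omega)$ in $\mathbb{Q}/\mathbb{Z}$.

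The main obstacle is this last exactness check, together with verifying that the torsion-free part does not swallow part of the cyclic factor. I expect the functional argument to handle it, and I would double-check it with the orbit bookkeeping. Given the cyclic cokernel torsion, the Smith normal form then has invariant factors $(1^{\ell-r-1}, o(\omega))$ among its $\ell - r$ nonzero entries.
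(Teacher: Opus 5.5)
Your overall route, computing the cokernel $Z/(\omega-1)Z$ from a presentation and eliminating generators along the $\langle\sigma\rangle$-orbits, is sound. However, the step you single out as the main obstacle relies on an argument that does not work. The map $x\mapsto(\tilde\alpha,x^\omega)$ is $\mathbb{R}$-linear and vanishes on all of $(\omega-1)E$. So from $o(\omega)\varpivee_j\in(\omega-1)Z$ you get $o(\omega)(\tilde\alpha,\varpivee_j^\omega)=0$, hence $(\tilde\alpha,\varpivee_j^\omega)=0$. In general, any $\mathbb{Q}$-valued functional that kills $(\omega-1)Z$ kills the whole torsion of the cokernel, so it can never detect the order of $\varpivee_j$. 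Concretely, \eqref{prop3.4} gives $\omega^t(\varpivee_j)=\varpivee_{\sigma^{t+1}(0)}-\varpivee_{\sigma^{t}(0)}$, which telescopes to $\varpivee_j^\omega=0$. Your value $\frac{o(\omega)-1}{o(\omega)}$ is what you get by averaging the vertex $\varpivee_j$ under the affine map $\widehat{\omega_j}$, not under the linear map $\omega_j$. For instance, in type $A_1$ one has $\omega=-1$ and $E^\omega=0$.

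The check is, however, unnecessary. Your elimination is a sequence of Tietze moves that uses each defining relation $(\omega-1)\varpivee_i$, $1\le i\le\ell$, exactly once:
for each $k\ge1$, $\#S_k^j-1$ of them eliminate $\varpivee_s$ for $s\in S_k^j\setminus\{s_k\}$, and one closes the orbit;
for $S_0^j$, $o(\omega)-2$ of them eliminate $\varpivee_s$ for $s\in S_0^j\setminus\{0,j\}$, and one closes it.

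Hence $Z/(\omega-1)Z$ has a presentation with generators $\varpivee_j,\varpivee_{s_1},\dots,\varpivee_{s_r}$ and relations $o(\omega)\varpivee_j=0$ and $m_k\,o(\omega)\varpivee_j=0$. This is literally $\mathbb{Z}/o(\omega)\oplus\mathbb{Z}^r$. The exact order of $\varpivee_j$, the freeness of the $\varpivee_{s_k}$, and even $r=r(\omega)$ then follow with no input beyond what you already used ($n$ constant on orbits, $\#S_0^j=o(\omega)$, $m_k\in\mathbb{Z}$).

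If you still want an independent certificate of the order, use the projection $Z\to Z/Q^\vee$ onto the quotient by the coroot lattice. It kills $(\omega-1)Z$ because $W$ acts trivially on $Z/Q^\vee$. Moreover, $t_zw\mapsto z+Q^\vee$ restricts to an isomorphism $\widehat{\Omega}\to Z/Q^\vee$ sending $\widehat{\omega_j}\mapsto\varpivee_j+Q^\vee$, so $\varpivee_j$ has order exactly $o(\omega_j)$ there.

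Compared with the paper: the paper does the same orbit bookkeeping but packages it as an explicit two-sided change of basis. It takes $u_j=\pi_0^j$, $u_{s_k}=\bar\pi_k^\omega$, $u_i=\varpivee_i$ otherwise, and $v_i=(\omega-1)\varpivee_i$ for the generic $i$, then checks that $U$ and $V$ are unimodular. Your cokernel computation is shorter and proves the statement on its own, but it yields only the invariant factors. The paper's version also produces the adapted basis $\{u_i\}$, which is exactly what the subsequent corollary uses to read off $M=\bigoplus_k\mathbb{Z}\bar\pi_k^\omega$ and the representatives $\frac{a}{o(\omega)}\pi_0^j$.
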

\begin{proof}
	Recall that 
	\begin{align}
		\omega(\varpivee_i) = \varpivee_{\sigma(i)} - n_i\varpivee_j
	\end{align}
	for all $i \in I$.
	Then we have
	\begin{align}
		\omega(\pi_0^j) = \pi_0^j - o(\omega)\varpivee_j.
	\end{align}
	For any $k \in \{0,\ldots,r\}$ and $s \in S_k^j\setminus \{s_k\}$, 
	\begin{align}
		\omega(\varpivee_s) = \varpivee_s + (\varpivee_{\sigma(s)} -  \varpivee_s - n_s\varpivee_j).
	\end{align}
	For $i \in \{1,\ldots,\ell\}$, let $u_i,v_i$ and $d_i$ satisfy 
	\begin{align}
		u_i \ceq \begin{cases*}
			\pi_0^j				& if $i = j$;\\
			\bar\pi_k^\omega 	& if $i = s_k$ for some $k \in \{1,\ldots,r\}$;\\
			\varpivee_i			& otherwise,
		\end{cases*}\qquad
		v_i \ceq \begin{cases*}
			-\varpivee_j												& if $i = j$;\\
			\varpivee_{s_k}												& if $i = s_k$ for some $k \in \{1,\ldots,r\}$;\\
			\varpivee_{\sigma(i)} -  \varpivee_i - n_i\varpivee_j		& otherwise,
		\end{cases*}
	\end{align}
	\begin{align}
		d_i \ceq \begin{cases*}
			o(\omega)	& if $i = j$;\\
			0			& if $i = s_k$ for some $k \in \{1,\ldots,r\}$;\\
			1			& otherwise.
		\end{cases*}
	\end{align}
	and define $\ell \times \ell$ matrices $U = (u_{ij})_{ij}$ and $V = (v_{ij})_{ij}$ such that $u_i = \sum_{j=1}^\ell u_{ij}\varpivee_j$ and $v_i = \sum_{j=1}^\ell v_{ij}\varpivee_j$.
	Then $U$ and $V$ are unimodular since $\det{U} = \prod_{i=1}^\ell u_{ii} = 1$ and $\det{V} = \prod_{i=1}^\ell v_{ii} = (-1)^{\ell-r}$.
	Hence we have
	\begin{align}
		U(R_\omega - I)V^{-1} = \diag(d_1,\ldots,d_\ell). \qedend
	\end{align}
\end{proof}

\begin{corollary}
	Let $\omega = \omega_j \in \OMEGA\setminus \{1\}$.
	Then the lattice $M$ defined in \cref{M} and the set $\XI$ defined in \cref{XI} are as follows:
	\begin{align}
		M = \bigoplus_{k=1}^r\mathbb{Z}\bar\pi_k^\omega,\qquad
		\XI = \Bigset{\frac{a}{o(\omega)}\pi_0^j}{a \in \{1,\ldots,o(\omega)\}}.
	\end{align}
\end{corollary}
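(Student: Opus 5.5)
The corollary follows by reading off the data from the preceding theorem. By definition, $M$ in \cref{M} is spanned by the $u_i$ with $d_i = 0$, and $\XI$ in \cref{XI} is built from the $u_i$ with $d_i > 1$. So I only need to identify these two index sets from the explicit choice of $u_i$ and $d_i$ in the proof of the preceding theorem. I will also check that this choice really gives a Smith-type decomposition, which means verifying $w(u_i) = u_i + d_iv_i$ with $U,V$ unimodular.

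First, the indices with $d_i = 0$ are exactly $i = s_k$ for $k \in \{1,\ldots,r\}$, and for these $u_{s_k} = \bar\pi_k^\omega$. Hence $M = \bigoplus_{k=1}^r \mathbb{Z}\bar\pi_k^\omega$. This is consistent with the theorem of \cite{UchiumiRoot} that $\{\bar\pi_1^\omega,\ldots,\bar\pi_r^\omega\}$ is a basis of $E^\omega$. One point needs care: the indices $s_k$ for $k \geq 1$, and the index $j$, are distinct elements of $\{1,\ldots,\ell\}$. Since $0 \in S_0^j$ and $\sigma(0) = j$, we have $j \in S_0^j$, so $j$ is not a minimum $s_k$ of any other orbit. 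Moreover $s_k \neq 0$ for $k \geq 1$, so the case split defining $u_i, d_i$ is well defined.

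Second, the only index with $d_i > 1$ is $i = j$, with $d_j = o(\omega)$ and $u_j = \pi_0^j$, provided $o(\omega) > 1$. This holds because $\omega \neq 1$. All remaining indices have $d_i = 1$, so they contribute nothing to $\XI$. Plugging into \cref{XI} gives $\XI = \tbigset{\frac{a}{o(\omega)}\pi_0^j}{a \in \{1,\ldots,o(\omega)\}}$.

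The only real obstacle is confirming the decomposition used above. For $u_j = \pi_0^j$, the relation $\omega(\pi_0^j) = \pi_0^j - o(\omega)\varpivee_j$ follows by summing \cref{prop3.4} over the orbit $S_0^j$: the $\varpivee$ terms are permuted, and we get $\sum_{s\in S_0^j} n_s = \#S_0^j = o(\omega)$ because $n_s = n_0 = 1$ on that orbit, the $n_i$ being $\sigma$-invariant. Note that $\varpivee_0 = 0$, so $\pi_0^j$ is a genuine element of $Z$. For $u_{s_k} = \bar\pi_k^\omega \in E^\omega$, we have $\omega(u) = u$, so $d = 0$ is consistent with any $v$. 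Unimodularity of $U$ and $V$ comes from their triangular form with respect to the ordering in which each orbit is listed from its minimum, as stated in the theorem. Granting that, the corollary is immediate.
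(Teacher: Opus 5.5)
Your proposal is correct and matches how the paper obtains the corollary: it is read off from the explicit data $u_i,v_i,d_i$ in the proof of the preceding theorem, where $d_i=0$ exactly for $i=s_k$ ($k\ge 1$, with $u_{s_k}=\bar\pi_k^\omega$) and the only $d_i>1$ is $d_j=o(\omega)$ with $u_j=\pi_0^j$. One small inaccuracy in your side remark on unimodularity: if an orbit has at least three elements and is listed forward as $s_k,\sigma(s_k),\sigma^2(s_k),\ldots$, then $V$ is not triangular, because the row of $\sigma^{-1}(s_k)$ has an entry in column $s_k$ while earlier rows have entries in later columns; $V$ becomes lower triangular if you put $j$ first and traverse each orbit along $\sigma^{-1}$ from its minimum, and since you rightly take unimodularity from the theorem, this does not affect the corollary.
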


Therefore, for $\xi = \frac{a}{o(\omega)}\pi_0^j \in \XI$ and $q \in \mathbb{Z}_{>0}$, the following are equivalent:
\begin{iitemize}
	\item $\epsilon_\xi(q) = 1$;
	\item $\pi_T(\xi) \in T[q]$;
	\item $a \in \frac{o(\omega)}{\gcd\{o(\omega),q\}}\mathbb{Z}$.
\end{iitemize}

\subsection{Via root systems obtianed by folding}\ \label{secnew4.3}

\noindent
Let $\omega = \omega_j \in \OMEGA\setminus \{1\}$.
Recall that 
\begin{align}
	\chi_{\PHI,q}(\omega) &= \sum_{\xi \in \XI}\epsilon_\xi(q) \cdot \#\bigset{\pi_T(x) \in T_M[q]}{(\beta,\xi) + (\beta,x) \not\in \mathbb{Z} \tforall \beta \in \PHI^+}\\
	&= \sum_{\xi \in \XI}\epsilon_\xi(q) \cdot \#\bigset{\pi_T(x) \in T_M[q]}{(\beta,\xi) + (\beta,x) \not\in \mathbb{Z} \tforall \beta \in \PHI}.
\end{align}
Define sets $\PHI^\omega$, $\PHI^\omega_\re$ and $\DELTA^\omega$ as
\begin{align}
	\PHI^\omega \ceq \bigset{\beta^\omega \in E^\omega}{\beta \in \PHI},\qquad
	\PHI^\omega_\re \ceq \bigset{\beta^\omega \in E^\omega}{\beta \in \PHI,\ \beta^\omega \neq 0}\qquad
	\DELTA^\omega \ceq \bigset{\alpha_i^\omega \in \PHI^\omega}{i \in I \setminus S_0^j}.
\end{align}

\begin{theorem}[{\cite[Theorem 3.7, Theorem 4.1]{UchiumiRoot}}]
	Let $\PHI$ be an irreducible reduced root system.
	Then $\PHI^\omega_\re$ is an irreducible root system.
	Furthermore, if $r(\omega) > 0$ (i.e., $\PHI^\omega_\re \neq \emptyset$), then $\DELTA^\omega$ is a basis of $\PHI_\re^\omega$, and the lattice $M$ is the coweight lattice of $\PHI^\omega_\re$.
\end{theorem}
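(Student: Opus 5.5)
The plan is to work directly with the explicit description of $E^\omega$ from the preceding subsection. The three claims are: $\PHI^\omega_\re$ is an irreducible root system, $\DELTA^\omega$ is a basis when $r(\omega)>0$, and $M$ is its coweight lattice. Throughout, $\omega=\omega_j$ permutes $\DELTA\cup\{\alpha_0\}$ via $\sigma$, $\{\bar\pi_1^\omega,\ldots,\bar\pi_r^\omega\}$ is a basis of $E^\omega$, and $M=\bigoplus_{k=1}^r\mathbb{Z}\bar\pi_k^\omega$.

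\textbf{Step 1 (projected roots as functionals on $E^\omega$).} For $\beta\in\PHI$ and $x\in E^\omega$ we have $(\beta^\omega,x)=(\beta,x)$, since the averaging operator is the orthogonal projection onto $E^\omega$. I compute these pairings on the basis of $E^\omega$. Write $\beta=\sum_{i\in I}c_i\alpha_i$, using $\sum_{i\in I}n_i\alpha_i=0$ to allow the $\alpha_0$-term. Because $\bar\pi_k^\omega=\pi_k^j-m_k\pi_0^j$ and $(\alpha_i,\varpivee_s)=\delta_{is}$ for $i,s\ge1$, we get
\begin{align}
(\alpha_i^\omega,\bar\pi_k^\omega)=\delta_{i\in S_k^j}\quad\text{for } i\notin S_0^j,\ k\ge1,\qquad (\alpha_i^\omega,\bar\pi_k^\omega)=\text{const}\quad\text{for } i\in S_0^j.
\end{align}
The convention $\varpivee_0=0$ has to be handled carefully here. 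The same-orbit simple roots therefore all project to the same $\alpha_{s_k}^\omega$, and $\DELTA^\omega$ pairs with $\{\bar\pi_k^\omega\}$ as the identity matrix.

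\textbf{Step 2 (integrality).} Every $\beta\in\PHI$ has integer pairings with $Z\supseteq M$. Each $\beta^\omega$ is an integer combination of the $\alpha_{s_k}^\omega$ ($k\ge1$), together with multiples of $\alpha_i^\omega$ for $i\in S_0^j$. The latter are expressible through the relation $\sum_{i\in I}n_i\alpha_i^\omega=0$. This gives an expansion of every element of $\PHI^\omega_\re$ in $\DELTA^\omega$ with coefficients all $\ge0$ or all $\le0$. These coefficients equal the pairings with $\bar\pi_k^\omega$, so they are integers, and sign-coherence is inherited from $\beta\in\PHI^\pm$ after checking that the $S_0^j$ contribution has a fixed sign. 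Consequently $M$ is exactly the dual lattice of $\mathbb{Z}\DELTA^\omega$, i.e.\ the coweight lattice, once $\DELTA^\omega$ is known to be a basis.

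\textbf{Step 3 (root system axioms).} This is the main obstacle: showing $\PHI^\omega_\re$ is closed under its own reflections and that $2(\gamma,\delta)/(\gamma,\gamma)\in\mathbb{Z}$. The reflections are the key difficulty, since this set may be non-reduced (e.g.\ $BC$ types). I would first show that the centralizer $W^\omega$ of $\omega$ in $W$ acts on $E^\omega$ and preserves $\PHI^\omega$. For each $\gamma=\alpha_{s_k}^\omega$, I would realize $s_\gamma|_{E^\omega}$ as the restriction of the longest element of the parabolic subgroup generated by $\{s_{\alpha_s}: s\in S_k^j\}$. This element commutes with $\omega$ because $\omega$ permutes this set. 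The components of that subdiagram are either orthogonal roots or, possibly, $A_2$ pieces. The $A_2$ case is what produces $BC$-type doubling, and there I would check by direct computation that the element acts as the reflection in $\gamma^\omega$ on $E^\omega$. A general $\gamma\in\PHI^\omega_\re$ is conjugate to a multiple of a simple one under the group these restrictions generate, by the usual argument on heights, and integrality of Cartan numbers follows from Step 2.

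\textbf{Step 4 (finiteness, spanning, irreducibility).} Finiteness is clear. Spanning follows from $\DELTA^\omega$ being dual to a basis. For irreducibility I would observe that the Dynkin diagram of $\DELTA^\omega$ is the quotient of the connected extended Dynkin diagram by $\langle\sigma\rangle$ with the orbit $S_0^j$ deleted. I would then check that this remains connected, using that the extended diagram minus any $\sigma$-stable set of nodes with $n_i=1$ stays connected, or failing that, by inspecting the finite list of possible $(\PHI,\omega)$.
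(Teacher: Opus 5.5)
\textbf{Verdict: your proposal has a genuine gap at the sign-coherence claim in Step 2, and Steps 3--4 depend on it.} The paper does not prove this statement; it imports it from \cite{UchiumiRoot}, where $\PHI^\omega_\re$ is identified by folding the extended Dynkin diagram, giving the case list in \cref{tableE}. Your Step 1 is correct; in fact $(\alpha_i,\bar\pi_k^\omega)=-m_k$ for every $i\in S_0^j$. Your realization of $s_\gamma$ for $\gamma=\alpha_{s_k}^\omega$ as the restriction of the longest element of the parabolic subgroup on $\{\alpha_s : s\in S_k^j\}$ also checks out, in both the $A_1\times\cdots\times A_1$ and $A_2\times\cdots\times A_2$ cases.

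\textbf{Why Step 2 fails.} For $\beta=\sum_{i=1}^\ell c_i\alpha_i\in\PHI^+$, the coefficient of $\alpha_{s_k}^\omega$ in $\beta^\omega$ is $\sum_{i\in S_k^j}c_i-m_k\sum_{i\in S_0^j\setminus\{0\}}c_i$. The $S_0^j$-contribution does have a fixed sign, but it is the \emph{opposite} sign to the rest, so nothing is inherited from $\beta\in\PHI^+$. Two examples show this concretely.
\begin{itemize}
\item Since $j=\sigma(0)\in S_0^j$, the simple root $\alpha_j$ projects to $\alpha_0^\omega=-\sum_{k\ge1}m_k\alpha_{s_k}^\omega$, which is negative.
\item For $A_5$ with $j=3$ (orbits $\{0,3\},\{1,4\},\{2,5\}$, $m_1=m_2=1$), the positive root $\alpha_1+\alpha_2+\alpha_3$ projects to $0$, so the two contributions genuinely cancel.
\end{itemize}
Sign-coherence is true, but it is the actual content of ``$\DELTA^\omega$ is a basis'', not a check. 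Everything downstream depends on it: your height reduction in Step 3 needs $s_{\gamma_k}(\gamma)$ to remain positive, the conjugacy of every element to a multiple of a simple one comes from that reduction, and irreducibility in Step 4 is read off the Dynkin diagram of a basis.

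\textbf{What is missing.} You need a reduction to the parabolic subsystem $\PHI_{J'}$ spanned by $\{\alpha_i : i\in I\setminus S_0^j\}$. On $\PHI_{J'}$, $\omega$ acts as an honest diagram automorphism: it permutes these simple roots, so it preserves positivity there.
\begin{itemize}
\item You must show that every element of $\PHI^\omega_\re$ is already the projection of a root of $\PHI_{J'}$.
\item A root $\gamma$ lies in $\PHI_{J'}$ iff $(\gamma,\pi_0^j)=0$. So this is exactly the statement $0\in P(\beta^\omega)$ for all $\beta^\omega\in\PHI^\omega_\re$, which the list in \cref{tableE} records.
\item Once that is known, classical folding of $\PHI_{J'}$ gives sign-coherence, the basis property and closure under reflections together, and your Steps 3--4 go through.
\end{itemize}
This is not a formality. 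The alcove geometry at the $\widehat{\omega_j}$-fixed point $\frac{1}{o(\omega)}\pi_0^j\in\overline{A_\circ}$ only controls roots with $(\beta,\pi_0^j)\in o(\omega)\mathbb{Z}$. Since $|(\beta,\pi_0^j)|\le o(\omega)-1$, these are precisely the roots of $\PHI_{J'}$, and the geometry says nothing about the other residues.

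\textbf{Two smaller points.}
\begin{itemize}
\item Cartan integrality does not follow from integrality of the coefficients alone; you need $\gamma^\vee\in M$. This holds for $\gamma=\alpha_{s_k}^\omega$: its coroot is $\sum_{s\in S_k^j}\alpha_s^\vee$ or twice that, which lies in $Z\cap E^\omega=M$. It then transports to all of $\PHI^\omega_\re$ by conjugacy.
\item In Step 4, the extended diagram minus a $\sigma$-stable set of special nodes need not be connected: for $A_5$, $j=3$ it leaves $\{1,2\}\sqcup\{4,5\}$. What is true is that $\sigma$ permutes the components transitively.
\end{itemize}
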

The root system $\PHI^\omega_\re$ can be characterized by the folding of the extended Dynkin diagram \cite[\S4.2]{UchiumiRoot}.

If $x \in E^\omega$, then 
\begin{align}
	(\omega(\beta),x) = (\omega(\beta,\omega(x))) = (\beta,x)
\end{align}
for all $\beta \in \PHI$.
Therefore, for any $\beta \in \PHI$ and $x \in E^\omega$, we have
\begin{align}
	(\beta^\omega,x) = (\beta,x).
\end{align}

For any $\beta^\omega \in \PHI^\omega$, define a set $\PI(\beta^\omega)$ by 
\begin{align}
	P(\beta^\omega) \ceq \bigset{(\gamma,\, \pi_0^j)}{\gamma \in \PHI,\ \gamma^\omega = \beta^\omega}.
\end{align}
For $q \in \mathbb{Z}_{>0}$, let $o(\omega)_q \ceq \frac{o(\omega)}{\gcd\{o(\omega),q\}}$, then we can see that
\begin{align}
	\chi_{\PHI,q}(\omega) 
	&= \sum_{\xi \in \XI}\epsilon_\xi(q) \cdot \#\bigset{\pi_T(x) \in T_M[q]}{(\beta,\xi) + (\beta,x) \not\in \mathbb{Z} \tforall \beta \in \PHI}\\
	&= \sum_{\substack{1 \leq a \leq o(\omega)\\a \in o(\omega)_q \mathbb{Z}}}\#\Bigset{\pi_T(x) \in T_M[q]}{\frac{ap}{o(\omega)} + (\beta^\omega,x) \not\in \mathbb{Z} \tforall \beta^\omega \in \PHI^\omega,\ p \in P(\beta^\omega)}. \label{chii}
\end{align}
Suppose that $\beta^\omega = 0$, then $P(\beta^\omega) = P(0) = \{\pm1,\,\ldots,\, \pm(o(\omega)-1)\}$ \cite[Proposition 5.24]{UchiumiRoot}.
If $\frac{ap}{o(\omega)} \in \mathbb{Z}$ for some $p \in P(0)$, then the term of \cref{chii} corresponding to $a$ is equal to $0$.
In other words, for the term do not equal $0$, it must be that $\gcd\{o(\omega),a\} = 1$.
Furthermore, if $a \in o(\omega)_q\mathbb{Z}$ satisfies $\gcd\{o(\omega),a\} = 1$, then $q$ must be multiple of $o(\omega)$.
Define a periodic function $c : \mathbb{Z}_{>0} \lra \mathbb{Z}$ by 
\begin{align}
	c(q) = \begin{cases*}
		0					& if $q \not\in o(\omega)\mathbb{Z}$;\\
		\varphi(o(\omega))	& if $q \in o(\omega)\mathbb{Z}$,
	\end{cases*}
\end{align}
where $\varphi$ is the Euler's totient function, that is, $\varphi(n)$ is equal to be the number of positive integers less than or equal to $n$ which are relatively prime to $n$.
Then we have 
\begin{align}
	\chi_{\PHI,q}(\omega) = c(q) \cdot \#\Bigset{\pi_T(x) \in T_M[q]}{\frac{p}{o(\omega)} + (\beta^\omega,x) \not\in \mathbb{Z} \tforall \beta^\omega \in \PHI_\re^\omega,\ p \in P(\beta^\omega)}. \label{chiphiq}
\end{align}

The root system $\PHI_\re^\omega$ and the set $P(\beta^\omega)$ are as shown in \cref{tableE}, and based on this, one of the main results of this paper is obtained as follows:

\begin{table}[h]
	
	\caption{List of type of $\PHI_\re^\omega$ and $P(\beta^\omega)$}\label{tableE}
	
	\centering

	\begin{tabular}{cc|c|c|Sc}
		\multicolumn{2}{c|}{$\PHI$}                         & $j$                              & $\PHI^{\omega}_\re$ 			& $P(\beta^\omega)$  \\ \hline\hline
		$A_\ell$  &                         & $\gcd\{\ell+1,j\} = 1$		   & $\varnothing$         			& --- \\ 
		$A_\ell$  &                         & $g \ceq \gcd\{\ell+1,j\} \neq 1$ & $A_{g-1}$         				& $\{0,\, \pm1,\, \ldots,\, \pm(o(\omega)-1)\}$ \\ 
		$B_2$     &                         & $j = 1$                          & $A_1$               		   	& $\{0,\pm1\}$ \\ 
		$B_\ell$  & $(\ell \geq 3)$			& $j = 1$                          & $B_{\ell-1}$          		   	& $\begin{cases*}
			\{0,\pm1\}		& if $\beta^\omega$ is a short root of $\PHI_\re^\omega$;\\
			\{0\}			& if $\beta^\omega$ is a long root of $\PHI_\re^\omega$
		\end{cases*}$ \\ 
		$C_\ell$  & $(\ell \geq 3$, odd)	& $j = \ell$                       & $BC_{\frac{\ell-1}{2}_{_{}}}$ 	& $\{0,\pm1\}$ \\ 
		$C_\ell$  & $(\ell \geq 4$, even) 	& $j = \ell$                       & $C_{\frac{\ell}{2}_{_{}}}$    	& $\{0,\pm1\}$ \\ 
		$D_\ell$  & $(\ell \geq 4$)   		& $j = 1 $                         & $B_{\ell-2}$    				& $\begin{cases*}
			\{0,\pm1\}		& if $\beta^\omega$ is a short root of $\PHI_\re^\omega$;\\
			\{0\}			& if $\beta^\omega$ is a long root of $\PHI_\re^\omega$
		\end{cases*}$ \\ 
		$D_\ell$  & $(\ell \geq 4$, even) 	& $j \in \{\ell-1,\, \ell\}$       & $C_{\frac{\ell}{2}_{_{}}}$    	& $\begin{cases*}
			\{0,\pm1\}		& if $\beta^\omega$ is a short root of $\PHI_\re^\omega$;\\
			\{0\}			& if $\beta^\omega$ is a long root of $\PHI_\re^\omega$
		\end{cases*}$  \\ 
		$D_\ell$  & $(\ell \geq 5$, odd)  	& $j \in \{\ell-1,\, \ell\}$       & $BC_{\frac{\ell-3}{2}_{_{}}}$  & $\begin{cases*}
			\{0,\pm1,\pm2,\pm3\}		& if $\beta^\omega$ is a short root of $\PHI_\re^\omega$;\\
			\{0,\pm2\}			& if $\beta^\omega$ is not a short root of $\PHI_\re^\omega$
		\end{cases*}$ \\ 
		$E_6$     &                         & $j \in \{1,6\}$                  & $G_2$          			    & $\begin{cases*}
			\{0,\pm1,\pm2\}		& if $\beta^\omega$ is a short root of $\PHI_\re^\omega$;\\
			\{0\}			& if $\beta^\omega$ is a long root of $\PHI_\re^\omega$
		\end{cases*}$ \\ 
		$E_7$     &                         & $j = 7$                          & $F_4$               			& $\begin{cases*}
			\{0,\pm1\}		& if $\beta^\omega$ is a short root of $\PHI_\re^\omega$;\\
			\{0\}			& if $\beta^\omega$ is a long root of $\PHI_\re^\omega$
		\end{cases*}$ \\ 
	\end{tabular}
	
	\

\end{table}

\begin{theorem}\label{thm5.6}
	Let $\PHI$ be an arbitrary irreducible reduced root system, and $\omega = \omega_j \in W_{A_\circ}$.
	Let $\PHI'$ be a root system and $d \in \mathbb{Z}_{>0}$, as given in \cref{tablePHId}.
	Then we have
	\begin{align}
		\chi_{\PHI,q}(\omega) = c(q) \cdot \chi_{d\PHI',M}^\quasi(q).
	\end{align}
	Recall that $M$ is the coweight lattice of $\PHI_\re^\omega$.
\end{theorem}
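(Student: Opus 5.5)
The plan is to begin from formula \cref{chiphiq}. It already isolates the factor $c(q)$, so what remains is to identify
\begin{align}
	N(q) \ceq \#\Bigset{\pi_T(x) \in T_M[q]}{\tfrac{p}{o(\omega)} + (\beta^\omega,x) \not\in \mathbb{Z} \tforall \beta^\omega \in \PHI_\re^\omega,\ p \in P(\beta^\omega)}
\end{align}
with $\chi_{d\PHI',M}^\quasi(q)$ whenever $q \in o(\omega)\mathbb{Z}$. When $q \notin o(\omega)\mathbb{Z}$ both sides vanish because $c(q)=0$. The trivial cases are handled first. If $\omega = 1$, then $c \equiv 1$, $M = Z$, and $d\PHI' = \PHI$, so the claim is the identity $\chi_{\PHI,q}(1) = \chi_\PHI^\quasi(q)$. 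If $\PHI_\re^\omega = \varnothing$ (type $A_\ell$ with $\gcd\{\ell+1,j\}=1$), then $N(q) = 1$, which agrees with $\chi_{\varnothing}^\quasi = 1$.

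For $q = o(\omega)q'$, I will rewrite each condition $\tfrac{p}{o(\omega)} + (\beta^\omega,x) \notin \mathbb{Z}$ as a congruence on lattice points. I write $\pi_T(x) \in T_M[q]$ as $x = y/q$ with $y \in M/qM$. The condition then becomes $o(\omega)(\beta^\omega,y) + q'p \not\equiv 0 \pmod{q}$. Take the union over $p \in P(\beta^\omega)$. By \cref{tableE}, $P(\beta^\omega)$ is always a symmetric set of consecutive integers such as $\{0,\pm1\}$ or $\{0,\pm1,\pm2\}$, or else $\{0\}$, or, in the $D_\ell$ odd case, $\{0,\pm2\}$. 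The union of the excluded residues is therefore $(\beta^\omega,y) \not\equiv 0 \pmod{q/k}$ for a suitable $k$, or equivalently $k(\beta^\omega,y) \not\equiv 0 \pmod q$. More precisely, the forbidden values of $(\beta^\omega,y)$ form a set of multiples of $q/o(\omega)$. I plan to match this set with the zero set of $(d\gamma, y) \bmod q$ for the corresponding root $\gamma \in \PHI'$. The combinatorial bookkeeping is done case by case: $P = \{0\}$ corresponds to the root itself, while $P = \{0,\pm 1,\ldots,\pm(m-1)\}$ with $m = o(\omega)$ forbids every multiple of $q/m$, which corresponds to the dilated root $m\beta^\omega$.

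The definition of $\PHI'$ and $d$ in \cref{tablePHId} should be exactly this rescaling. One scales each root of $\PHI^\omega_\re$ by the factor dictated by $P(\beta^\omega)$, and then factors out a common dilation $d$, so that $d\PHI'$ equals the set $\{k_\beta \beta^\omega\}$. An example is $E_7$, $j=7$: short roots of $F_4$ carry $P = \{0,\pm1\}$ and $o(\omega) = 2$, so they are scaled by $2$, long roots are scaled by $1$, and the result is the dual $F_4^\vee$ with $d=1$. Once this is set up, $N(q)$ counts $y \in M/qM$ with $(\gamma,y) \not\equiv 0 \pmod q$ for all $\gamma \in d\PHI'$, which is the definition of $\chi_{d\PHI',M}^\quasi(q)$ from \cref{sec2.2.3}. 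Replacing $\PHI$ by positive roots causes no trouble, because $P(-\beta^\omega) = -P(\beta^\omega)$ is symmetric.

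The main obstacle is the case-by-case verification that the union of forbidden residues is really a single subgroup condition of the form $k\,(\beta^\omega,y) \equiv 0 \pmod q$. The awkward entries are the non-consecutive set $\{0,\pm2\}$ in type $D_\ell$ with $\ell$ odd, where $o(\omega) = 4$, and the $BC$ cases. For $\{0,\pm2\}$ with $o(\omega) = 4$, the excluded set is $\{0, \pm q/2\} = \{0, q/2\}$, which is again a subgroup, so the root scales by $2$. For $\{0,\pm1,\pm2,\pm3\}$ the root scales by $4$. I will check each row of \cref{tableE} in this way and read off $\PHI'$ and $d$, confirming that they agree with \cref{tablePHId}.
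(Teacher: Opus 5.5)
Your plan is correct and is essentially the paper's argument: starting from \cref{chiphiq}, the paper merges the conditions indexed by $P(\beta^\omega)$ into one condition $k(\beta^\omega,x)\notin\mathbb{Z}$ via \cref{lem6.1} (working with real numbers rather than residues mod $q$, and handling $\{0,\pm2\}$ with $o(\omega)=4$ as you do), then identifies the rescaled root set as $d\PHI'$ row by row. One slip: clearing denominators gives $(\beta^\omega,y)+q'p\not\equiv 0 \pmod{q}$, without the factor $o(\omega)$ in front of $(\beta^\omega,y)$, although everything you derive afterwards already uses this correct form.
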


\begin{table}[h]
	
	\caption{List of type of $\PHI_\re^\omega$ and $P(\beta^\omega)$}\label{tablePHId}
	
	\centering

	\begin{tabular}{cc|c|c|c|c}
		\multicolumn{2}{c|}{$\PHI$}                        & $j$                              & $\PHI^{\omega}_\re$ 			& $\PHI'$ & $d$  \\ \hline\hline
		$A_\ell$  &                         & $\gcd\{\ell+1,j\} = 1$		   & $\varnothing$         			& $\varnothing$ & $1$ \\ 
		$A_\ell$  &                         & $g \ceq \gcd\{\ell+1,j\} \neq 1$ & $A_{g-1}$         				& $A_{g-1}$ & $o(\omega_j) = \frac{\ell+1}{g}$ \\ 
		$B_2$     &                         & $j = 1$                          & $A_1$               		   	& $A_1$ & $2$ \\ 
		$B_\ell$  & $(\ell \geq 3)$			& $j = 1$                          & $B_{\ell-1}$          		   	& $C_{\ell-1}$ & $1$ \\ 
		$C_\ell$  & $(\ell \geq 3$, odd)	& $j = \ell$                       & $BC_{\frac{\ell-1}{2}_{_{}}}$ 	& $BC_{\frac{\ell-1}{2}}$ & $2$ \\ 
		$C_\ell$  & $(\ell \geq 4$, even) 	& $j = \ell$                       & $C_{\frac{\ell}{2}_{_{}}}$    	&  $C_{\frac{\ell}{2}}$ & $2$ \\ 
		$D_\ell$  & $(\ell \geq 4$)   		& $j = 1 $                         & $B_{\ell-2}$    				& $C_{\ell-2}$ &  $1$ \\ 
		$D_\ell$  & $(\ell \geq 4$, even) 	& $j \in \{\ell-1,\, \ell\}$       & $C_{\frac{\ell}{2}_{_{}}}$    	& $B_{\frac{\ell}{2}}$ & $2$  \\ 
		$D_\ell$  & $(\ell \geq 5$, odd)  	& $j \in \{\ell-1,\, \ell\}$       & $BC_{\frac{\ell-3}{2}_{_{}}}$  & $C_{\frac{\ell-3}{2}}$ & $2$ \\ 
		$E_6$     &                         & $j \in \{1,6\}$                  & $G_2$          			    & $(\PHI^\omega_\re)^\vee$ & $1$ \\ 
		$E_7$     &                         & $j = 7$                          & $F_4$               			& $(\PHI^\omega_\re)^\vee$ & $1$ \\ 
	\end{tabular}
	
	\

\end{table}

We will leave the details of how to determine $\PHI'$ and $d$, and the explicit calculations to the next section.

\begin{corollary}\label{chichi}
	Let $\PHI$ be an arbitrary irreducible reduced root system with the exponents $e_1,\ldots,e_\ell$ and the index of connection $f$.
	If $\gcd\{f,q\} = 1$, then 
	\begin{align}
		\chi_{\PHI,q} = \frac{\chi_{\PHI}^\quasi(q)}{\#W} \cdot \chi_{\st},
	\end{align}
	where $\chi_{\st}$ is the regular character of $W$:
	\begin{align}
		\chi_{\st}(w) = \begin{cases*}
			\#W	& if $w = 1$;\\
			0	& if $w \neq 1$.
		\end{cases*}
	\end{align}
	Furthermore, if $\gcd\{f,\tilde{n}_\PHI,q\} = 1$, then 
	\begin{align}
		\chi_{\PHI,q} = \frac{\chi_{\A_\PHI}(q)}{\#W} \cdot \chi_{\st} = \frac{(q-e_1) \cdots (q-e_\ell)}{\#W} \cdot \chi_\st.
	\end{align}
\end{corollary}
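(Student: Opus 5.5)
The plan is to show that for $\gcd\{f,q\}=1$ the character $\chi_{\PHI,q}$ vanishes off the identity, and then use $\chi_{\PHI,q}(1)=\chi_\PHI^\quasi(q)$.

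First, by \cref{thm4.2} we have $\chi_{\PHI,q}=\Ind^W_{W_{A_\circ}}\chi_{A_\circ,q}$. So $\chi_{\PHI,q}(w)$ is a nonnegative combination of values $\chi_{A_\circ,q}(u^{-1}wu)$ with $u^{-1}wu\in W_{A_\circ}$. It therefore suffices to show $\chi_{A_\circ,q}(\omega)=0$, or equivalently $\chi_{\PHI,q}(\omega)=0$, for every $\omega=\omega_j\in W_{A_\circ}\setminus\{1\}$ whenever $\gcd\{f,q\}=1$. Recall $W_{A_\circ}=\OMEGA\cong Z/\veeQ$ by \cref{prop4.6}, so $\#\OMEGA=f$, and the order $o(\omega)$ divides $f$. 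If $\omega\neq 1$, then $o(\omega)>1$ is a divisor of $f$, hence $o(\omega)\nmid q$ when $\gcd\{f,q\}=1$.

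Next, the reduction \cref{chiphiq} (equivalently \cref{thm5.6}) gives $\chi_{\PHI,q}(\omega)=c(q)\cdot(\cdots)$ with $c(q)=0$ for $q\notin o(\omega)\mathbb{Z}$, so $\chi_{\PHI,q}(\omega)=0$. Note that \cref{chiphiq} was derived in \cref{secnew4.3} before \cref{thm5.6}, so this uses only the general argument: $P(0)=\{\pm1,\ldots,\pm(o(\omega)-1)\}$ forces $\gcd\{a,o(\omega)\}=1$, and together with $a\in o(\omega)_q\mathbb{Z}$ this forces $o(\omega)\mid q$. One must also handle the case $\omega=\omega_j$ with $r(\omega)=0$ (type $A_\ell$ with $\gcd\{\ell+1,j\}=1$); the same computation applies, since $\PHI^\omega_\re=\varnothing$ while the factor $c(q)$ is unchanged. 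Then, for $w\ne1$ and every conjugate $u^{-1}wu\in W_{A_\circ}$, we have $u^{-1}wu\ne1$, so every term in the induced-character sum vanishes and $\chi_{\PHI,q}(w)=0$. For $w=1$ the value is $\chi_\PHI^\quasi(q)$, which yields $\chi_{\PHI,q}=\frac{\chi_\PHI^\quasi(q)}{\#W}\chi_\st$.

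For the second claim, assume $\gcd\{f,\tilde n_\PHI,q\}=1$. The subtle point, and the main obstacle, is that this hypothesis is weaker than $\gcd\{f,q\}=1$. By \cref{roottable}, $f$ and $\tilde n_\PHI$ are coprime only for type $A$ ($\tilde n=1$) or $f=1$ ($E_8,F_4,G_2$). In the other types ($B,C,D,E_6,E_7$), $f$ divides a power of a prime dividing $\tilde n_\PHI$: $f\in\{2,4\}$ with $\tilde n=2$, $f=3$ with $\tilde n=6$, and $f=2$ with $\tilde n=12$. In every case, $\gcd\{f,\tilde n_\PHI,q\}=1$ implies either $\gcd\{f,q\}=1$ and $\gcd\{\tilde n_\PHI,q\}=1$, or $\tilde n_\PHI=1$ (type $A$).

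In the type $A$ case, $\chi^\quasi_\PHI$ is a polynomial, but $\gcd\{f,q\}$ might exceed $1$. The statement with $\chi_\st$ then requires $\chi_{\PHI,q}(\omega)=0$, which fails in general: for $A_1$ with $q$ even, $\chi_{\PHI,q}(\omega)=c(q)\neq0$. So I expect the intended reading to be under $\gcd\{f,q\}=1$ as well, giving $\gcd\{\tilde n_\PHI,q\}=1$ in the remaining cases. With that reading, \cref{KamiyaTakemuraTerao} gives $\chi_\PHI^\quasi(q)=\chi_{\A_\PHI}(q)$ by the gcd-property, and \cref{thm2.2} factors this as $(q-e_1)\cdots(q-e_\ell)$. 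I would check this case analysis against \cref{roottable} carefully, since it is where the hypothesis as stated needs interpretation.
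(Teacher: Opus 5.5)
\textbf{First part.} Your first part is correct and is the paper's argument. Since $\#W_{A_\circ}=f$, every $o(\omega)>1$ divides $f$, so the factor $c(q)$ in the reduction \eqref{chiphiq} kills $\chi_{\PHI,q}(\omega)$ when $\gcd\{f,q\}=1$. The induced-character formula then extends this to all $w\neq 1$. Your $A_1$, $q$ even example is also right: $\chi_{\PHI,q}(\omega)=1$ there, so the literal hypothesis $\gcd\{f,\tilde n_\PHI,q\}=1$ cannot force the vanishing.

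\textbf{Second part: the gap.} You claim that outside type $A$, $\gcd\{f,\tilde n_\PHI,q\}=1$ (alone, or together with $\gcd\{f,q\}=1$) forces $\gcd\{\tilde n_\PHI,q\}=1$.

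What you actually checked is that every prime dividing $f$ divides $\tilde n_\PHI$. That only yields $\gcd\{f,q\}=1$. To get $\gcd\{\tilde n_\PHI,q\}=1$ you need the reverse: every prime dividing $\tilde n_\PHI$ must divide $f$. This fails for:
\begin{itemize}
\item $E_6$, where $f=3$ and $\tilde n_\PHI=6$;
\item $E_7$, where $f=2$ and $\tilde n_\PHI=12$;
\item $E_8$, $F_4$, $G_2$, where $f=1$ and the hypothesis is vacuous.
\end{itemize}

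Under your reading the second formula is actually false. For $G_2$ and $q=2$ we have $\chi_{\PHI,2}(1)=\chi^\quasi_\PHI(2)=0$, while $(2-1)(2-5)=-3$. Likewise $E_6$ with $q=2$ and $E_7$ with $q=3$ give $\chi^\quasi_\PHI(q)=0\neq\chi_{\A_\PHI}(q)$. So your step ``by the gcd-property, $\chi^\quasi_\PHI(q)=\chi_{\A_\PHI}(q)$'' is unjustified exactly in these cases.

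\textbf{Fix.} Read the hypothesis as $\gcd\{f,q\}=\gcd\{\tilde n_\PHI,q\}=1$, i.e.\ $\gcd\{f\tilde n_\PHI,q\}=1$. This is what the paper's proof really uses: $\gcd\{f,q\}=1$ gives the vanishing off the identity, and $\gcd\{\tilde n_\PHI,q\}=1$ gives $\chi_{\PHI,q}(1)=\chi_{\A_\PHI}(q)$. With that reading the second claim follows at once from your first part, the gcd-property, and $\chi_{\A_\PHI}(t)=(t-e_1)\cdots(t-e_\ell)$.
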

\begin{proof}
	For the identity element $1 \in W$, we have $\chi_{\PHI,q}(1) = \chi_\PHI^\quasi(q)$.
	In particular, if $\gcd\{\tilde{n}_\PHI,q\} = 1$, then $\chi_{\PHI,q}(1) = \chi_{\A_\PHI}(q)$ by \cref{KamiyaTakemuraTerao}.
	
	Let $q \in \mathbb{Z}_{>0}$ satisfy $\gcd\{f,q\} = 1$.
	Since $f = \#Z/\veeQ = \#W_{A_\circ}$, $q$ is relatively prime to $o(w)$ for all $w \in W_{A_\circ} \setminus\{1\}$.
	Therefore $\chi_{\PHI,q}(w) = 0$ for all $w \in W_{A_\circ}\setminus\{1\}$, and hence
	\begin{align}
		\chi_{\PHI,q}(w) = \begin{cases*}
			\chi_\PHI^\quasi(q)	& if $w = 1$;\\
			0	& if $w \neq 1$.
		\end{cases*}\qedend
	\end{align}
\end{proof}

\begin{remark}
	In general, the statement such as \cref{chichi} does not hold for hyperplane arrangements (See \cite[Example 2.11]{Uchiumi}).
\end{remark}

\section{Details of the calculation} \label{sec5}

First, we will discuss the following elementary number theoretic lemma.

\begin{lemma}\label{lem6.1}
	Let $d \in \mathbb{Z}_{>0}$.
	For $x \in \mathbb{R}$, the following are equivalent:
	\begin{eenumeratei}
		\item $dx \not\in \mathbb{Z}$;
		\item $x + \frac{p}{d} \not\in \mathbb{Z}$ for all $p \in \{0,\pm1\ldots,\pm(d-1)\}$.
	\end{eenumeratei}
\end{lemma}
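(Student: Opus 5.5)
The plan is to prove both implications by contraposition. In each direction the statement reduces to one observation: $x+\frac{p}{d}\in\mathbb{Z}$ for some integer $p$ forces $dx=dx+p-p\equiv -p \pmod{\mathbb{Z}}$ after multiplying by $d$, so $dx$ is an integer. The only real content is that the residues $p\in\{0,\pm1,\ldots,\pm(d-1)\}$ cover every class modulo $d$.

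For (ii)$\Rightarrow$(i), suppose $dx\in\mathbb{Z}$ and write $dx=m$. By division with remainder, choose $p\in\{0,1,\ldots,d-1\}$ with $m+p\equiv 0\pmod d$; explicitly, take $p\equiv -m \pmod d$. Then
\begin{align}
	x+\frac{p}{d}=\frac{m+p}{d}\in\mathbb{Z},
\end{align}
and $p$ lies in the allowed set $\{0,\pm1,\ldots,\pm(d-1)\}$, so (ii) fails. The negative values of $p$ are not needed here; they are included only so that the set is symmetric, matching $P(0)$ in \cref{chii}.

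For (i)$\Rightarrow$(ii), suppose $x+\frac{p}{d}=n\in\mathbb{Z}$ for some $p$ in the set. Then $dx=dn-p\in\mathbb{Z}$, so (i) fails. No condition on $p$ beyond integrality is used.

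Nothing in this argument is a genuine obstacle. The one point to state explicitly is that $\{0,1,\ldots,d-1\}$ is a complete residue system modulo $d$ and is contained in the given symmetric set. The case $d=1$ is consistent: the set is $\{0\}$, and both conditions read $x\notin\mathbb{Z}$.
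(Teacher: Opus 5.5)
Your proof is correct and matches the paper's argument: in the nontrivial direction the paper takes $p/d = \lceil x\rceil - x$, which is exactly your $p\in\{0,\ldots,d-1\}$ with $p\equiv -dx \pmod d$. The other direction ($x+\frac{p}{d}\in\mathbb{Z}$ implies $dx\in\mathbb{Z}$) is the same as yours.
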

\begin{proof}
	Suppose that $dx \in \mathbb{Z}$ and let $c \ceq \lceil x \rceil -x$, where $\lceil x \rceil = \min\tbigset{n \in \mathbb{Z}}{x \leq n}$.
	Then $0 \leq c < 1$, and $dc = d \lceil x \rceil - dx$ is an integer.
	Hence $c = \frac{p}{d}$ for some $p \in \{0,\ldots,d-1\}$, and we have $x + \frac{p}{d} = \lceil x \rceil \in \mathbb{Z}$.
	
	Conversely, suppose that $x + \frac{p}{d} \in \mathbb{Z}$ for some $p \in \{0,\pm1\ldots,\pm(d-1)\}$.
	Then $dx + p$ is an integer, and hence $dx \in \mathbb{Z}$.
\end{proof}

\subsection{type $A_\ell$}\ 

\noindent 
Let $\PHI$ be a root system of type $A_\ell$, and $\omega = \omega_j \in W_{A_\circ} \setminus \{1\}$.
If $\gcd\{\ell+1,\, j\} = 1$, then 
\begin{align}
	\chi_{\PHI,q}(\omega) = c(q) = c(q) \cdot \chi_{\varnothing}^\quasi(q).
\end{align}
 by \cref{chiphiq}.
Suppose that $g \ceq \gcd\{\ell+1,\, j\} \neq 1$ and let $d \ceq o(\omega) = \frac{\ell+1}{g}$.
Then $\PHI^\omega_\re$ is of type $A_{g-1}$ and $P(\beta^\omega) = \{0,\pm1,\ldots,\pm(d-1)\}$ for all $\beta^\omega \in \PHI_\re^\omega$.
Therefore, by \cref{lem6.1}, the equation \cref{chiphiq} can be expressed as follows:
\begin{align}
	\chi_{\PHI,q}(\omega) &= c(q) \cdot \#\Bigset{\pi_T(x) \in T_M[q]}{\frac{p}{d} + (\beta^\omega,x) \not\in \mathbb{Z} \tforall \beta^\omega \in \PHI_\re^\omega,\ p \in P(\beta^\omega)}\\
	&= c(q) \cdot \#\bigset{\pi_T(x) \in T_M[q]}{(d\beta^\omega,x) \not\in \mathbb{Z} \tforall \beta^\omega \in \PHI_\re^\omega}\\
	&= c(q) \cdot \chi_{d\PHI_\re^\omega,M}^\quasi(q).
\end{align}

\begin{theorem}[type $A_\ell$]\label{typeAell}
	Let $\PHI$ be a root system of type $A_\ell$, and let $\omega = \omega_j \in W_{A_\circ}$, $g \ceq \gcd\{\ell+1,\, j\}$ and $d \ceq o(\omega)$.
	Then 
	\begin{align}
		\chi_{\PHI,q}(\omega) = \begin{cases*}
			\!\!\!\begin{array}{ll}
			0										& \textup{if $q \not\in d\mathbb{Z}$;}\\
			\varphi(d)(q-d)(q-2d) \cdots (q-d(g-1))	& \textup{if $q \in d\mathbb{Z}$}.
			\end{array}
		\end{cases*}
	\end{align}
\end{theorem}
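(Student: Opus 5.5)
The plan starts from the reduction made just before the statement. There we showed $\chi_{\PHI,q}(\omega) = c(q)\cdot \chi_{d\PHI_\re^\omega,M}^\quasi(q)$ with $d = o(\omega)$ and $\PHI^\omega_\re$ of type $A_{g-1}$, using \cref{lem6.1}. Here $M$ is the coweight lattice of $\PHI_\re^\omega$ and $c(q)$ vanishes unless $d \mid q$. Two facts remain to be checked.

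First, the case $g=1$. Then $\PHI_\re^\omega=\varnothing$, $\chi_\varnothing^\quasi = 1$, and the formula reduces to $c(q)$, which is the empty product. For $q\notin d\mathbb{Z}$ both sides are $0$. For $q\in d\mathbb{Z}$ the left side is $\varphi(d)$, matching the statement.

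Second, suppose $g>1$ and $q = dq'$. We must compute $\chi_{d\PHI',M}^\quasi(q)$ with $\PHI'$ of type $A_{g-1}$ and $M$ its coweight lattice. For this I will apply \cref{dilated} with $k=d$, $r=q$. Since $d\mid q$, we get $\gcd\{d,q\}=d$, hence
\begin{align}
f_d^{(q)}(t) = d^{g-1} f_1^{(q/d)}(t/d).
\end{align}
By \cref{typeAroot}, $\chi_{A_{g-1}}^\quasi$ is the single polynomial $(t-1)\cdots(t-(g-1))$, so every constituent $f_1^{(r)}$ equals it. Evaluating at $t=q$ then gives
\begin{align}
d^{g-1}\prod_{i=1}^{g-1}\left(\frac{q}{d}-i\right) = \prod_{i=1}^{g-1}(q-id).
\end{align}
Because $\chi_{d\PHI',M}^\quasi$ is the quasi-polynomial of a central arrangement, \cref{KamiyaTakemuraTerao} holds with $q_0=0$. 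The constituent formula therefore gives the actual value at $q$, with no small-$q$ exceptions.

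Multiplying by $c(q)=\varphi(d)$ when $d\mid q$, and using $c(q)=0$ otherwise, yields the stated formula. For completeness, when $\omega=1$ we have $d=1$, $g=\ell+1$, and the formula reduces to \cref{typeAroot}.

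The main point needing care is the input to the reduction. Specifically, one must check that $\PHI_\re^\omega$ has type $A_{g-1}$ and that $P(\beta^\omega)=\{0,\pm1,\ldots,\pm(d-1)\}$ for every $\beta^\omega$; both are taken from \cref{tableE}, i.e. from \cite{UchiumiRoot}. Relying on these, the remaining steps are mechanical.
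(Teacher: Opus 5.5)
Your proposal is correct and is essentially the paper's argument. Starting from the reduction $\chi_{\PHI,q}(\omega)=c(q)\cdot\chi^\quasi_{d\PHI^\omega_\re,M}(q)$, the paper just applies \cref{dilated} to the formula of \cref{typeAroot}, exactly as you do with $k=d$ and $\gcd\{d,q\}=d$; your separate treatment of $g=1$ and $\omega=1$ makes explicit what the paper leaves implicit.
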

\begin{proof}
	Use \cref{dilated} for the formula of \cref{typeAroot}.
\end{proof}

\subsection{type $B_\ell$}\ 

\noindent
Let $\PHI$ be a root system of type $B_\ell$, and $\omega = \omega_j \in W_{A_\circ} \setminus \{1\}$ (i.e., $j = 1$).
Then $o(\omega) = 2$.

If $\ell= 2$, then $\PHI_\re^\omega$ is of type $A_1$ and $P(\beta^\omega) = \{0,\pm1\}$.
Therefore, by \cref{lem6.1}, the equation \cref{chiphiq} can be expressed as follows:
\begin{align}
	\chi_{\PHI,q}(\omega) &= c(q) \cdot \#\Bigset{\pi_T(x) \in T_M[q]}{\frac{p}{2} + (\beta^\omega,x) \not\in \mathbb{Z} \tforall \beta^\omega \in \PHI_\re^\omega,\ p \in P(\beta^\omega)}\\
	&= c(q) \cdot \#\bigset{\pi_T(x) \in T_M[q]}{(2\beta^\omega,x) \not\in \mathbb{Z} \tforall \beta^\omega \in \PHI_\re^\omega}\\
	&= c(q) \cdot \chi_{2\PHI_\re^\omega,M}^\quasi(q).
\end{align}

If $\ell \geq 3$, then $\PHI_\re^\omega$ is of type $B_{\ell-1}$, and 
\begin{align}
	P(\beta^\omega) = \begin{cases*}
		\{0,\pm1\}		& if $\beta^\omega$ is a short root of $\PHI_\re^\omega$;\\
		\{0\}			& if $\beta^\omega$ is a long root of $\PHI_\re^\omega$.
	\end{cases*}
\end{align}
Therefore, by \cref{lem6.1}, the equation \cref{chiphiq} can be expressed as follows:
\begin{align}
	\chi_{\PHI,q}(\omega) &= c(q) \cdot \#\Bigset{\pi_T(x) \in T_M[q]}{\frac{p}{2} + (\beta^\omega,x) \not\in \mathbb{Z} \tforall \beta^\omega \in \PHI_\re^\omega,\ p \in P(\beta^\omega)}\\
	&= c(q) \cdot \#\Bigset{\pi_T(x) \in T_M[q]}{
		\begin{lgathered}
			(2\beta^\omega,x) \not\in \mathbb{Z} \text{ if $\beta^\omega \in \PHI_\re^\omega$ is a short root},\\
			(\beta^\omega,x) \not\in \mathbb{Z} \text{ if $\beta^\omega \in \PHI_\re^\omega$ is a long root}
		\end{lgathered}}
\end{align}
Let $\PHI'$ be a root system of type $C_{\ell-1}$.
Then we have
\begin{align}
	\chi_{\PHI,q}(\omega) 
	&= c(q) \cdot \#\Bigset{\pi_T(x) \in T_M[q]}{
		\begin{lgathered}
			(2\beta^\omega,x) \not\in \mathbb{Z} \text{ if $\beta^\omega \in \PHI_\re^\omega$ is a short root},\\
			(\beta^\omega,x) \not\in \mathbb{Z} \text{ if $\beta^\omega \in \PHI_\re^\omega$ is a long root}
	\end{lgathered}}\\
	&= c(q) \cdot \#\bigset{\pi_T(x) \in T_M[q]}{(\beta,x) \not\in \mathbb{Z} \tforall \beta \in \PHI'}\\
	&= c(q) \cdot \chi_{\PHI',M}^\quasi(q).
\end{align}

\begin{theorem}[type $B_\ell$]\label{typeBell}
	Let $\PHI$ be a root system of type $B_\ell$, and $\omega = \omega_j \in W_{A_\circ}$.
	Then 
	\begin{align}
		\chi_{\PHI,q}(\omega) = 		
		\begin{cases*}
			\!\!\!\begin{array}{ll}
				(q-1)(q-3) \cdots (q-(2\ell-1))				& \textup{if $\omega = 1$ and $q \not\in 2\mathbb{Z}$;}\\
				(q-2)(q-4) \cdots (q-2(\ell-1))(q-\ell)		& \textup{if $\omega = 1$ and $q \in 2\mathbb{Z}$;}\\
			\end{array}\\
			\!\!\!\begin{array}{ll}
				0											& \textup{if $\omega = \omega_1$ and $q \not\in 2\mathbb{Z}$;}\\
				(q-2)(q-4) \cdots (q-2(\ell-1))				& \textup{if $\omega = \omega_1$ and $q \in 2\mathbb{Z}$}.
			\end{array}
		\end{cases*}
	\end{align}
\end{theorem}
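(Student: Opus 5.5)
We split according to whether $\omega = 1$ or $\omega = \omega_1$. For type $B_\ell$ we have $J = \{0,1\}$, because $n_0 = n_1 = 1$ and every other $n_i$ equals $2$. So $W_{A_\circ} = \OMEGA = \{1,\omega_1\}$ by \cref{prop4.6}.

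\textbf{Case $\omega = 1$.} Here $\chi_{\PHI,q}(1) = \chi_\PHI^\quasi(q)$, and \cref{typeBroot} gives both constituents directly.

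\textbf{Case $\omega = \omega_1$.} We have $o(\omega) = 2$, so $c(q) = 0$ for odd $q$ and $c(q) = \varphi(2) = 1$ for even $q$. The odd case therefore vanishes, and it remains to evaluate the count from the reduction carried out just above the statement.

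For $\ell \geq 3$ that reduction gives $\chi_{\PHI,q}(\omega) = c(q)\cdot \chi^\quasi_{\PHI',M}(q)$ with $\PHI'$ of type $C_{\ell-1}$. The root system $\PHI'$ is obtained by doubling the short roots $\pm e_k$ of $\PHI^\omega_\re \cong B_{\ell-1}$. Its hyperplane arrangement is the one given by the forms $e_i \pm e_j$ and $2e_k$. The lattice $M$ is the coweight lattice of $B_{\ell-1}$, which is the lattice $L$ generated by $e_1,\ldots,e_{\ell-1}$. Hence $\chi^\quasi_{\PHI',M} = \chi^\quasi_{\PHI(C_{\ell-1}),L}$, and the second formula of \cref{typeCroot}, applied with $\ell-1$ in place of $\ell$, gives for even $q$
\begin{align}
	(q-2)(q-4)\cdots(q-2(\ell-2))(q-2(\ell-1)),
\end{align}
which is the claimed product.

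The main point to verify carefully is this identification of $M$ with $L$ rather than with the coweight lattice of $C_{\ell-1}$; using the wrong lattice would produce a last factor $q-(\ell-1)$ instead of $q-2(\ell-1)$. This identification follows from the theorem that $M$ is the coweight lattice of $\PHI^\omega_\re = B_{\ell-1}$, together with the explicit description $\DELTA^\vee = \{e_1+\cdots+e_i\}$ for type $B$.

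For $\ell = 2$ the reduction gives instead $c(q)\cdot\chi^\quasi_{2\PHI^\omega_\re,M}(q)$ with $\PHI^\omega_\re$ of type $A_1$, and $M$ is its coweight lattice. \cref{dilated} with $k = 2$, applied to $\chi^\quasi_{A_1}(q) = q-1$ from \cref{typeAroot}, gives for even $q$ (so $g = \gcd\{2,q\} = 2$) the value $2\cdot(q/2 - 1) = q-2$. This agrees with the general formula at $\ell = 2$.

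Combining the two cases gives the stated table.
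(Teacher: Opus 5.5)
Your proposal is correct and is essentially the paper's proof. The case $\omega = 1$ is read off from \cref{typeBroot}. For $\omega = \omega_1$, the reduction $\chi_{\PHI,q}(\omega) = c(q)\cdot\chi^\quasi_{\PHI',M}(q)$, with $\PHI'$ of type $C_{\ell-1}$ and $M$ the lattice spanned by $e_1,\ldots,e_{\ell-1}$, is evaluated by the second formula of \cref{typeCroot}. Your separate check of $\ell = 2$ via \cref{dilated}, which gives $q-2$, is a small extra step that the paper's one-line proof passes over, and it agrees with the general formula.
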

\begin{proof}
	In the case where $\omega = 1$, see \cref{typeBroot}.
	If $\omega = \omega_1$, then $\chi_{\PHI,q}(\omega)$ is equal to the second formula of \cref{typeCroot}.
\end{proof}

\subsection{type $C_\ell$}\ 

\noindent
Let $\PHI$ be a root system of type $C_\ell$, and $\omega = \omega_j \in W_{A_\circ} \setminus \{1\}$ (i.e., $j = \ell$).
Then $o(\omega) = 2$.

If $\ell$ is odd, then $\PHI_\re^\omega$ is of type $BC_{\frac{\ell-1}{2}}$, and $P(\beta^\omega) = \{0,\pm1\}$ for all $\beta^\omega \in \PHI_\re^\omega$.
Therefore, by \cref{lem6.1}, the equation \cref{chiphiq} can be expressed as follows:
\begin{align}
	\chi_{\PHI,q}(\omega) = &= c(q) \cdot \#\Bigset{\pi_T(x) \in T_M[q]}{\frac{p}{2} + (\beta^\omega,x) \not\in \mathbb{Z} \tforall \beta^\omega \in \PHI_\re^\omega,\ p \in P(\beta^\omega)}\\
	&= c(q) \cdot \#\bigset{\pi_T(x) \in T_M[q]}{(2\beta^\omega,x) \not\in \mathbb{Z} \tforall \beta^\omega \in \PHI_\re^\omega}\\
	&= c(q) \cdot \chi_{2\PHI_\re^\omega,M}^\quasi(q).
\end{align}

\begin{theorem}
	Let $\PHI$ be a root system of type $C_\ell$, $\ell \not\in 2\mathbb{Z}$, and $\omega = \omega_j \in W_{A_\circ}$.
	Then 
	\begin{align}
		\chi_{\PHI,q}(\omega) = 
		\begin{cases*}
			\!\!\!\begin{array}{ll}
				(q-1)(q-3) \cdots (q-(2\ell-1))				& \textup{if $\omega = 1$ and $q \not\in 2\mathbb{Z}$;}\\
				(q-2)(q-4) \cdots (q-2(\ell-1))(q-\ell)		& \textup{if $\omega = 1$ and $q \in 2\mathbb{Z}$;}\\
			\end{array}\\
			\!\!\!\begin{array}{ll}
				0											& \textup{if $\omega = \omega_{\ell}$ and $\gcd\{4,q\} = 1$;}\\
				(q-2)(q-6) \cdots (q-2(\ell-2))				& \textup{if $\omega = \omega_{\ell}$ and $\gcd\{4,q\} = 2$;}\\
				(q-4)(q-8) \cdots (q-2(\ell-1))				& \textup{if $\omega = \omega_{\ell}$ and $\gcd\{4,q\} = 4$}.
			\end{array}
		\end{cases*}
	\end{align}
\end{theorem}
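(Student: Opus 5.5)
The plan is to handle the two elements of $W_{A_\circ}=\{1,\omega_\ell\}$ separately. For $\omega=1$ there is nothing new to prove: $\chi_{\PHI,q}(1)=\chi_\PHI^\quasi(q)$, and the first formula of \cref{typeCroot} gives exactly the two stated constituents.

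For $\omega=\omega_\ell$ I start from the reduction obtained just before the statement. There $o(\omega)=2$, so $c(q)=0$ for odd $q$ and $c(q)=\varphi(2)=1$ for even $q$, and
\begin{align}
	\chi_{\PHI,q}(\omega)=c(q)\cdot\chi_{2\PHI_\re^\omega,M}^\quasi(q),
\end{align}
with $\PHI_\re^\omega$ of type $BC_m$, where $m\ceq\frac{\ell-1}{2}$. In particular the case $\gcd\{4,q\}=1$, i.e.\ $q$ odd, gives $0$ immediately.

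The main step is to identify $\chi_{2\PHI_\re^\omega,M}^\quasi$ for even $q$. By the theorem from \cite{UchiumiRoot}, $M$ is the coweight lattice of $\PHI_\re^\omega$. In the coordinates of the $BC_m$ subsection this is the lattice $L=\bigoplus\mathbb{Z}e_i$. The doubled root system $2BC_m$ contains both $2e_k$ and $4e_k$. The condition $(4e_k,x)\not\in\mathbb{Z}$ implies $(2e_k,x)\not\in\mathbb{Z}$, so the hyperplanes coming from $2e_k$ are redundant in the complement count. Hence the complement for $2BC_m$ coincides with the complement for $2C_m$ in coordinates on $L$, which gives
\begin{align}
	\chi_{2\PHI_\re^\omega,M}^\quasi=\chi_{2\PHI(C_m),L}^\quasi.
\end{align}

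Next I apply \cref{dilated} with $k=2$ to the second formula of \cref{typeCroot}, which computes $\chi_{\PHI(C_m),L}^\quasi$. The proof of \cref{dilated} only uses that the elementary divisors of $2S$ are twice those of $S$, so it applies verbatim for the lattice $L$ in place of $Z$. This is the point I would double check, since the theorem is stated for the coweight lattice.

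For even $r$ we have $g=2$, so the constituent is $f_2^{(r)}(t)=2^m f_1^{(r/2)}(t/2)$. There are two cases.
\begin{iitemize}
	\item If $r/2$ is odd, i.e.\ $\gcd\{4,q\}=2$, this equals $2^m\prod_{i=1}^m\bigl(\tfrac t2-(2i-1)\bigr)=\prod_{i=1}^m(t-2(2i-1))$. That is $(q-2)(q-6)\cdots(q-2(\ell-2))$, since $2(2m-1)=2(\ell-2)$.
	\item If $r/2$ is even, i.e.\ $\gcd\{4,q\}=4$, this equals $2^m\prod_{i=1}^m\bigl(\tfrac t2-2i\bigr)=(q-4)(q-8)\cdots(q-4m)$, and $4m=2(\ell-1)$.
\end{iitemize}
Multiplying by $c(q)=1$ for even $q$ gives the stated formulas.

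The only delicate points are the redundancy argument, which identifies $2BC_m$ with $2C_m$ relative to $L$, and the validity of \cref{dilated} for a lattice other than the coweight lattice. Everything else is substitution.
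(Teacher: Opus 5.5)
Your proposal is correct and is essentially the paper's proof. The paper also reduces to $c(q)\cdot\chi^\quasi_{2\PHI^\omega_\re,M}(q)$ and then applies \cref{dilated}. The only difference is that it applies \cref{dilated} directly to the $BC_m$ formula of \cref{typeBCroot}, whose proof is exactly your redundancy argument $\chi^\quasi_{BC_m}=\chi^\quasi_{\PHI(C_m),L}$, done before dilating rather than after.

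Doing the redundancy first also removes your worry. There $M=L$ is the coweight lattice of $BC_m$ itself, so \cref{dilated} applies verbatim. In any case, the proof of \cref{dilated} only uses that the elementary divisors of $2S$ are twice those of $S$, which holds for any lattice.
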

\begin{proof}
	In the case where $\omega = 1$, see \cref{typeCroot}.
	When $\omega = \omega_\ell$, use \cref{dilated} for the formula of \cref{typeBCroot}.
\end{proof}

If $\ell$ is even, then $\PHI_\re^\omega$ is of type $C_{\frac{\ell}{2}}$, and $P(\beta^\omega) = \{0,\pm1\}$ for all $\beta^\omega \in \PHI_\re^\omega$.
Therefore, by \cref{lem6.1}, the equation \cref{chiphiq} can be expressed as follows:
\begin{align}
	\chi_{\PHI,q}(\omega) = &= c(q) \cdot \#\Bigset{\pi_T(x) \in T_M[q]}{\frac{p}{2} + (\beta^\omega,x) \not\in \mathbb{Z} \tforall \beta^\omega \in \PHI_\re^\omega,\ p \in P(\beta^\omega)}\\
	&= c(q) \cdot \#\bigset{\pi_T(x) \in T_M[q]}{(2\beta^\omega,x) \not\in \mathbb{Z} \tforall \beta^\omega \in \PHI_\re^\omega}\\
	&= c(q) \cdot \chi_{2\PHI_\re^\omega,M}^\quasi(q).
\end{align}

\begin{theorem}
	Let $\PHI$ be a root system of type $C_\ell$, $\ell \in 2\mathbb{Z}$, and $\omega = \omega_j \in W_{A_\circ}$.
	Then 
	\begin{align}
		\chi_{\PHI,q}(\omega) = \begin{cases*}
			\!\!\!\begin{array}{ll}
			(q-1)(q-3) \cdots (q-(2\ell-1))				& \textup{if $\omega = 1$ and $q \not\in 2\mathbb{Z}$;}\\
			(q-2)(q-4) \cdots (q-2(\ell-1))(q-\ell)		& \textup{if $\omega = 1$ and $q \in 2\mathbb{Z}$;}\\
			\end{array}\\
			\!\!\!\begin{array}{ll}
			0											& \textup{if $\omega = \omega_{\ell}$ and $\gcd\{4,q\} = 1$;}\\
			(q-2)(q-6) \cdots (q-2(\ell-1))				& \textup{if $\omega = \omega_{\ell}$ and $\gcd\{4,q\} = 2$;}\\
			(q-4)(q-8) \cdots (q-2(\ell-2))(q-\ell)		& \textup{if $\omega = \omega_{\ell}$ and $\gcd\{4,q\} = 4$}.
			\end{array}
		\end{cases*}
	\end{align}
\end{theorem}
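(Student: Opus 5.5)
The plan is to split according to whether $\omega = 1$ or $\omega = \omega_\ell$; these are the only elements of $W_{A_\circ} = \OMEGA$, since $J = \{0,\ell\}$ for type $C_\ell$. For $\omega = 1$ we have $\chi_{\PHI,q}(1) = \chi_\PHI^\quasi(q)$, and the first two lines of the statement are exactly the first formula of \cref{typeCroot}. Nothing further is needed there.

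For $\omega = \omega_\ell$ I would start from the reduction displayed just before the statement:
\begin{align}
	\chi_{\PHI,q}(\omega) = c(q)\cdot\chi_{2\PHI_\re^\omega,M}^\quasi(q),
\end{align}
where $\PHI_\re^\omega$ is of type $C_{m}$ with $m \ceq \frac{\ell}{2}$, and $M$ is its coweight lattice by the theorem quoted from \cite{UchiumiRoot}. Since $o(\omega) = 2$, we have $c(q) = 0$ for $q$ odd and $c(q) = \varphi(2) = 1$ for $q$ even. This immediately gives the value $0$ when $\gcd\{4,q\} = 1$. For even $q$ it remains to compute $\chi_{2\PHI_\re^\omega,M}^\quasi(q)$. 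Because $M$ is the coweight lattice of $\PHI_\re^\omega$, this is precisely the situation of \cref{dilated}, applied with $k = 2$ to the root system $C_m$. The base constituents $f_1^{(r)}$ are given by the first formula of \cref{typeCroot}: the odd constituent is $\prod_{i=1}^m(t-(2i-1))$, and the even constituent is $\prod_{i=1}^{m-1}(t-2i)\cdot(t-m)$. Since the arrangement is central, \cref{KamiyaTakemuraTerao} ensures that the constituents describe the function for every $q>0$, so no small-$q$ exceptions arise.

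Now I would evaluate the formula of \cref{dilated} for even $q$, where $g = \gcd\{2,q\} = 2$. If $\gcd\{4,q\} = 2$, then $q/2$ is odd, and
\begin{align}
	2^m\prod_{i=1}^m\Bigl(\tfrac{q}{2}-(2i-1)\Bigr) = \prod_{i=1}^m\bigl(q-2(2i-1)\bigr) = (q-2)(q-6)\cdots(q-2(\ell-1)),
\end{align}
using $2(2m-1) = 2(\ell-1)$. If $\gcd\{4,q\} = 4$, then $q/2$ is even, and
\begin{align}
	2^m\prod_{i=1}^{m-1}\Bigl(\tfrac{q}{2}-2i\Bigr)\Bigl(\tfrac{q}{2}-m\Bigr) = (q-4)(q-8)\cdots(q-4(m-1))\,(q-2m),
\end{align}
and $4(m-1) = 2(\ell-2)$ and $2m = \ell$ give the stated product.

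The only delicate point is ensuring that \cref{dilated} is applied with the correct lattice. It must be applied to $C_m$ with respect to its own coweight lattice $M$, not to some sublattice such as the one spanned by the $e_i$, so the relevant input is the first (full coweight) formula of \cref{typeCroot}, not the formula for the lattice $L$. Once this identification is justified by the cited theorem of \cite{UchiumiRoot}, the rest is bookkeeping of the indices.
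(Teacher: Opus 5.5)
Your proposal is correct and is essentially the paper's own proof. The case $\omega = 1$ is the first formula of \cref{typeCroot}. For $\omega = \omega_\ell$, you combine $\chi_{\PHI,q}(\omega) = c(q)\cdot\chi_{2\PHI_\re^\omega,M}^\quasi(q)$ with \cref{dilated} for $k=2$, applied to the first (full coweight lattice) formula of \cref{typeCroot} for $C_{\ell/2}$, exactly as the paper does, and your index bookkeeping ($2(2m-1)=2(\ell-1)$, $4(m-1)=2(\ell-2)$, $2m=\ell$) is right.
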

\begin{proof}
	In the case where $\omega = 1$, see \cref{typeCroot}.
	When $\omega = \omega_\ell$, use \cref{dilated} for the first formula of \cref{typeCroot}.
\end{proof}

\subsection{type $D_\ell$}\ 

\noindent
Let $\PHI$ be a root system of type $D_\ell$, and $\omega = \omega_j \in W_{A_\circ} \setminus \{1\}$.

If $j = 1$, then $o(\omega) = 2$, $\PHI_\re^\omega$ is of type $B_{\ell-2}$, and 
\begin{align}
	P(\beta^\omega) = \begin{cases*}
		\{0,\pm1\}		& if $\beta^\omega$ is a short root of $\PHI_\re^\omega$;\\
		\{0\}			& if $\beta^\omega$ is a long root of $\PHI_\re^\omega$.
	\end{cases*}
\end{align}
Therefore, by \cref{lem6.1}, the equation \cref{chiphiq} can be expressed as follows:
\begin{align}
	\chi_{\PHI,q}(\omega) &= c(q) \cdot \#\Bigset{\pi_T(x) \in T_M[q]}{\frac{p}{2} + (\beta^\omega,x) \not\in \mathbb{Z} \tforall \beta^\omega \in \PHI_\re^\omega,\ p \in P(\beta^\omega)}\\
	&= c(q) \cdot \#\Bigset{\pi_T(x) \in T_M[q]}{
		\begin{lgathered}
			(2\beta^\omega,x) \not\in \mathbb{Z} \text{ if $\beta^\omega \in \PHI_\re^\omega$ is a short root},\\
			(\beta^\omega,x) \not\in \mathbb{Z} \text{ if $\beta^\omega \in \PHI_\re^\omega$ is a long root}
	\end{lgathered}}
\end{align}
Let $\PHI'$ be a root system of type $C_{\ell-2}$.
Then we have
\begin{align}
	\chi_{\PHI,q}(\omega) 
	&= c(q) \cdot \#\Bigset{\pi_T(x) \in T_M[q]}{
		\begin{lgathered}
			(2\beta^\omega,x) \not\in \mathbb{Z} \text{ if $\beta^\omega \in \PHI_\re^\omega$ is a short root},\\
			(\beta^\omega,x) \not\in \mathbb{Z} \text{ if $\beta^\omega \in \PHI_\re^\omega$ is a long root}
	\end{lgathered}}\\
	&= c(q) \cdot \#\bigset{\pi_T(x) \in T_M[q]}{(\beta,x) \not\in \mathbb{Z} \tforall \beta \in \PHI'}\\
	&= c(q) \cdot \chi_{\PHI',M}^\quasi(q).
\end{align}

Suppose that $j \in \{\ell-1,\, \ell\}$.
If $\ell$ is even, then $\PHI_\re^\omega$ is of type $C_{\frac{\ell}{2}}$, and 
\begin{align}
	P(\beta^\omega) = \begin{cases*}
		\{0,\pm1\}		& if $\beta^\omega$ is a short root of $\PHI_\re^\omega$;\\
		\{0\}			& if $\beta^\omega$ is a long root of $\PHI_\re^\omega$.
	\end{cases*}
\end{align}
Therefore, by \cref{lem6.1}, the equation \cref{chiphiq} can be expressed as follows:
\begin{align}
	\chi_{\PHI,q}(\omega) &= c(q) \cdot \#\Bigset{\pi_T(x) \in T_M[q]}{\frac{p}{2} + (\beta^\omega,x) \not\in \mathbb{Z} \tforall \beta^\omega \in \PHI_\re^\omega,\ p \in P(\beta^\omega)}\\
	&= c(q) \cdot \#\Bigset{\pi_T(x) \in T_M[q]}{
		\begin{lgathered}
			(2\beta^\omega,x) \not\in \mathbb{Z} \text{ if $\beta^\omega \in \PHI_\re^\omega$ is a short root},\\
			(\beta^\omega,x) \not\in \mathbb{Z} \text{ if $\beta^\omega \in \PHI_\re^\omega$ is a long root}
	\end{lgathered}}
\end{align}
Let $\PHI'$ be a root system of type $B_{\frac{\ell}{2}}$.
Then we have
\begin{align}
	\chi_{\PHI,q}(\omega) 
	&= c(q) \cdot \#\Bigset{\pi_T(x) \in T_M[q]}{
		\begin{lgathered}
			(2\beta^\omega,x) \not\in \mathbb{Z} \text{ if $\beta^\omega \in \PHI_\re^\omega$ is a short root},\\
			(\beta^\omega,x) \not\in \mathbb{Z} \text{ if $\beta^\omega \in \PHI_\re^\omega$ is a long root}
	\end{lgathered}}\\
	&= c(q) \cdot \#\bigset{\pi_T(x) \in T_M[q]}{(2\beta,x) \not\in \mathbb{Z} \tforall \beta \in \PHI'}\\
	&= c(q) \cdot \chi_{2\PHI',M}^\quasi(q).
\end{align}
If $\ell$ is odd, then $\PHI_\re^\omega$ is of type $BC_{\frac{\ell-3}{2}}$, and 
\begin{align}
	P(\beta^\omega) = \begin{cases*}
		\{0,\pm1,\pm2,\pm3\}		& if $\beta^\omega$ is a short root of $\PHI_\re^\omega$;\\
		\{0,\pm2\}			& if $\beta^\omega$ is not a short root of $\PHI_\re^\omega$
	\end{cases*}
\end{align}
Therefore, by \cref{lem6.1}, the equation \cref{chiphiq} can be expressed as follows:
\begin{align}
	\chi_{\PHI,q}(\omega) &= c(q) \cdot \#\Bigset{\pi_T(x) \in T_M[q]}{\frac{p}{2} + (\beta^\omega,x) \not\in \mathbb{Z} \tforall \beta^\omega \in \PHI_\re^\omega,\ p \in P(\beta^\omega)}\\
	&= c(q) \cdot \#\Bigset{\pi_T(x) \in T_M[q]}{
		\begin{lgathered}
			(4\beta^\omega,x) \not\in \mathbb{Z} \text{ if $\beta^\omega \in \PHI_\re^\omega$ is a short root},\\
			(2\beta^\omega,x) \not\in \mathbb{Z} \text{ if $\beta^\omega \in \PHI_\re^\omega$ is not a short root}
	\end{lgathered}}
\end{align}
Let $\PHI'$ be a root system of type $C_{\frac{\ell-3}{2}}$.
Then we have
\begin{align}
	\chi_{\PHI,q}(\omega) 
	&= c(q) \cdot \#\Bigset{\pi_T(x) \in T_M[q]}{
		\begin{lgathered}
			(4\beta^\omega,x) \not\in \mathbb{Z} \text{ if $\beta^\omega \in \PHI_\re^\omega$ is a short root},\\
			(2\beta^\omega,x) \not\in \mathbb{Z} \text{ if $\beta^\omega \in \PHI_\re^\omega$ is not a short root}
		\end{lgathered}}\\
	&= c(q) \cdot \#\bigset{\pi_T(x) \in T_M[q]}{(2\beta,x) \not\in \mathbb{Z} \tforall \beta \in \PHI'}\\
	&= c(q) \cdot \chi_{2\PHI',M}^\quasi(q).
\end{align}

\begin{theorem}[type $D_\ell$ ($\ell$ is even)]
	Let $\PHI$ be a root system of type $D_\ell$, $\ell \in 2\mathbb{Z}$, and $\omega = \omega_j \in W_{A_\circ}$.
	Then 
	\begin{align}
		\chi_{\PHI,q}(\omega) = \begin{cases*}
			\!\!\!\begin{array}{ll}
			(q-1)(q-3) \cdots (q-(2\ell-3))(q-(\ell-1))										
					& \textup{if $\omega = 1$ and $q \not\in 2\mathbb{Z}$;}\\
			(q-2)(q-4) \cdots (q-2(\ell-2))(q^2 -2(\ell-1)q + \frac{\ell(\ell-1)}{2})		
					& \textup{if $\omega = 1$ and $q \in 2\mathbb{Z}$;}\\
			\end{array}\\
			\!\!\!\begin{array}{ll}
			0
					& \textup{if $\omega = \omega_1$ and $q \not\in 2\mathbb{Z}$;}\\
			(q-2)(q-4) \cdots (q-2(\ell-2))													
					& \textup{if $\omega = \omega_1$ and $q \in 2\mathbb{Z}$;}\\
			\end{array}\\
			\!\!\!\begin{array}{ll}
			0
					& \textup{if $\omega \in \{\omega_{\ell-1},\,\omega_\ell\}$ and $\gcd\{4,q\} = 1$;}\\
			(q-2)(q-6) \cdots (q-2(\ell-3))(q - (\frac{3\ell}{2}-2))				
					& \textup{if $\omega \in \{\omega_{\ell-1},\,\omega_\ell\}$ and $\gcd\{4,q\} = 2$;}\\
			(q-4)(q-8) \cdots (q-2(\ell-2))(q-\frac{\ell}{2})		
					& \textup{if $\omega \in \{\omega_{\ell-1},\,\omega_\ell\}$ and $\gcd\{4,q\} = 4$}.
			\end{array}
		\end{cases*}
	\end{align}
\end{theorem}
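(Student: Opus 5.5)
The plan is to treat the three kinds of $\omega$ separately, since the reductions just carried out express each $\chi_{\PHI,q}(\omega)$ through a quasi-polynomial that is already computed in \cref{sec2.4}. For $\omega = 1$ there is nothing new: $\chi_{\PHI,q}(1) = \chi_\PHI^\quasi(q)$, which is the first formula of \cref{typeDroot}.

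For $\omega = \omega_1$ we have $o(\omega)=2$, so $c(q)=0$ for odd $q$ and $c(q)=\varphi(2)=1$ for even $q$. We showed $\chi_{\PHI,q}(\omega_1) = c(q)\cdot\chi_{\PHI',M}^\quasi(q)$ with $\PHI'$ of type $C_{\ell-2}$, where $M$ is the coweight lattice of $\PHI_\re^\omega$, of type $B_{\ell-2}$. The first step is to identify $M$ in the standard realization of \cref{sec2.4}. The coweight lattice of $B_{\ell-2}$ is $\bigoplus\mathbb{Z}e_i$. In that realization, $\PHI'$ is obtained by doubling the short roots $e_k$ into $2e_k$, so $\PHI'$ is the standard $C_{\ell-2}$. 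Consequently $\chi_{\PHI',M}^\quasi = \chi_{\PHI(C_{\ell-2}),L}^\quasi$, with $L$ the lattice generated by the $e_i$. The second formula of \cref{typeCroot} with $\ell$ replaced by $\ell-2$ then gives $(q-2)(q-4)\cdots(q-2(\ell-2))$ for even $q$, as claimed.

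For $\omega\in\{\omega_{\ell-1},\omega_\ell\}$ with $\ell$ even, again $o(\omega)=2$ and $c(q)\in\{0,1\}$. We have $\chi_{\PHI,q}(\omega) = c(q)\cdot\chi_{2\PHI',M}^\quasi(q)$, where $\PHI'$ is of type $B_{\ell/2}$ and $M$ is the coweight lattice of $C_{\ell/2}$. This is exactly the setting of \cref{prop2.9} with $\ell$ replaced by $\ell/2$. The one thing to check is that the identification $\PHI'$ as "$C_{\ell/2}$ with long roots halved" matches the embedding of $2\PHI(B_{\ell/2})$ into $Z^\vee$ used there. In the standard coordinates, $2\PHI(B_m)$ consists of $\pm2(e_i\pm e_j)$ and $\pm2e_k$. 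This is $2\times$(short roots of $C_m$) together with the long roots $\pm2e_k$ of $C_m$, matching the conditions $(2\beta^\omega,x)\notin\mathbb{Z}$ for short $\beta^\omega$ and $(\beta^\omega,x)\notin\mathbb{Z}$ for long $\beta^\omega$. \cref{prop2.9} with $m=\ell/2$ then gives:
\begin{align}
	(q-2)(q-6)\cdots(q-2(\ell-3))\Bigl(q-\bigl(\tfrac{3\ell}{2}-2\bigr)\Bigr) \quad\text{if } \gcd\{4,q\}=2,
\end{align}
since $2(2m-3)=2(\ell-3)$ and $3m-2=\tfrac{3\ell}{2}-2$, and
\begin{align}
	(q-4)(q-8)\cdots(q-2(\ell-2))\Bigl(q-\tfrac{\ell}{2}\Bigr) \quad\text{if } \gcd\{4,q\}=4,
\end{align}
since $4(m-1)=2(\ell-2)$. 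For odd $q$, $c(q)=0$ gives $0$.

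The main obstacle is the bookkeeping identifying $M$ and the short/long-root convention of $\PHI_\re^\omega$ with the concrete lattices and root systems of \cref{typeCroot} and \cref{prop2.9}. One has to be sure that the factor $2$ lands on the short roots and that $M$ is the full coweight lattice, not the sublattice $L$. I would verify this by checking the rank-one and rank-two cases directly (for example $\ell=4$, where $C_2$ and $B_2$ coincide up to scaling) before trusting the general identification. The remaining arithmetic is just substituting $m=\ell-2$ or $m=\ell/2$.
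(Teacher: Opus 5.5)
Your argument is correct and has the same structure as the paper's proof. You split according to $\omega$, use the reductions $\chi_{\PHI,q}(\omega)=c(q)\cdot\chi^{\quasi}_{d\PHI',M}(q)$ derived just before the theorem, and substitute precomputed quasi-polynomials: \cref{typeDroot} for $\omega=1$, and the second formula of \cref{typeCroot} with $\ell-2$ in place of $\ell$ for $\omega_1$.

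You differ from the written proof only in the case $\omega\in\{\omega_{\ell-1},\omega_\ell\}$, and there your choice is the right one. You invoke \cref{prop2.9} with $m=\ell/2$. The paper's proof instead says to apply \cref{dilated} to the second formula of \cref{typeCroot}. That would compute $\chi^{\quasi}_{2\PHI(C_{\ell/2}),L}$, giving $(q-2)(q-6)\cdots(q-2(\ell-1))$ when $\gcd\{4,q\}=2$ and $(q-4)(q-8)\cdots(q-2\ell)$ when $\gcd\{4,q\}=4$. These are not the stated constituents. The stated constituents are exactly those of \cref{prop2.9} for $2\PHI(B_{\ell/2})$ in the coweight lattice of $C_{\ell/2}$, which matches the entry $\PHI'=B_{\ell/2}$, $d=2$ of \cref{tablePHId}. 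So that pointer in the paper appears to be a slip; \cref{prop2.9} is otherwise never used.

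For $\ell=4$ the diagram automorphism of $D_4$ swapping nodes $1$ and $4$ confirms this. It conjugates $\omega_1$ to $\omega_4$, which forces $\chi_{\PHI,q}(\omega_4)=(q-2)(q-4)$ for every even $q$. \cref{prop2.9} gives this value, while the dilated-$C_2$ computation does not.

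Your remaining worry is already settled by results the paper quotes in \cref{secnew4.3}. $M$ is the full coweight lattice of $\PHI^\omega_\re$ by the folding theorem cited there. The placement of the factor $2$ on the short roots is read off from \cref{tableE}. So the low-rank sanity checks you propose are reassuring but not needed for the proof.
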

\begin{proof}
	In the case where $\omega = 1$, see \cref{typeDroot}.
	If $\omega =\omega_1$, then $\chi_{\PHI,q}(\omega)$ is equal to the second formula of \cref{typeCroot}.
	When $\omega \in \{\omega_{\ell-1},\omega_\ell\}$, use \cref{dilated} for the second formula of \cref{typeCroot}.
\end{proof}

\begin{theorem}[type $D_\ell$ ($\ell$ is odd)]
	Let $\PHI$ be a root system of type $D_\ell$, $\ell \not\in 2\mathbb{Z}$, and $\omega = \omega_j \in W_{A_\circ}$.
	Then 
	\begin{align}
		\chi_{\PHI,q}(\omega) = \begin{cases*}
			\!\!\!\begin{array}{ll}
			(q-1)(q-3) \cdots (q-(2\ell-3))(q-(\ell-1))										
			& \textup{if $\omega = 1$ and $q \not\in 2\mathbb{Z}$;}\\
			(q-2)(q-4) \cdots (q-2(\ell-2))(q^2 -2(\ell-1)q + \frac{\ell(\ell-1)}{2})		
			& \textup{if $\omega = 1$ and $q \in 2\mathbb{Z}$;}\\
			\end{array}\\
			\!\!\!\begin{array}{ll}
			0
			& \textup{if $\omega = \omega_1$ and $q \not\in 2\mathbb{Z}$;}\\
			(q-2)(q-4) \cdots (q-2(\ell-2))													
			& \textup{if $\omega = \omega_1$ and $q \in 2\mathbb{Z}$;}\\
			\end{array}\\
			\!\!\!\begin{array}{ll}
			0
			& \textup{if $\omega \in \{\omega_{\ell-1},\,\omega_\ell\}$ and $q \not\in 4\mathbb{Z}$;}\\
			2(q-4)(q-8) \cdots (q-2(\ell-3))		
			& \textup{if $\omega \in \{\omega_{\ell-1},\,\omega_\ell\}$ and $q \in 4\mathbb{Z}$}.
			\end{array}
		\end{cases*}
	\end{align}
\end{theorem}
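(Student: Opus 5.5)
The proof splits into the three classes of elements of $W_{A_\circ}=\OMEGA$, namely $\omega=1$, $\omega=\omega_1$ and $\omega\in\{\omega_{\ell-1},\omega_\ell\}$.

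\emph{The cases $\omega=1$ and $\omega=\omega_1$.} For $\omega=1$ we have $\chi_{\PHI,q}(1)=\chi_\PHI^\quasi(q)$, and this is read off directly from \cref{typeDroot}. For $\omega=\omega_1$ the reduction above gives $\chi_{\PHI,q}(\omega_1)=c(q)\cdot\chi^\quasi_{\PHI',M}(q)$ with $\PHI'$ of type $C_{\ell-2}$ and $c(q)=\varphi(2)=1$ for even $q$, $0$ otherwise. We first check that $M$, the coweight lattice of $\PHI^\omega_\re$ of type $B_{\ell-2}$, is the lattice $L$ generated by the standard basis for $\PHI'$. Then the second formula of \cref{typeCroot}, read at even $q$, gives $(q-2)(q-4)\cdots(q-2(\ell-2))$.

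\emph{The case $\omega\in\{\omega_{\ell-1},\omega_\ell\}$.} Here $\ell$ is odd, and we use the reduction just carried out: $\chi_{\PHI,q}(\omega)=c(q)\cdot\chi^\quasi_{2\PHI',M}(q)$, where $\PHI'$ is of type $C_m$ with $m=\frac{\ell-3}{2}$ and $M$ is the coweight lattice of $BC_m$, i.e.\ the lattice $L=\bigoplus\mathbb{Z}e_i$. The key point is that $\omega_\ell$ has order $4$ when $\ell$ is odd, since the fundamental group of $D_\ell$ is $\mathbb{Z}/4$. This is consistent with the $P(\beta^\omega)$ in \cref{tableE} having entries up to $\pm3$ for $d=4$. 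Hence $c(q)=\varphi(4)=2$ if $4\mid q$ and $c(q)=0$ otherwise, which already yields the vanishing for $q\notin4\mathbb{Z}$.

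For $4\mid q$ we apply \cref{dilated} with $k=2$ to $\chi^\quasi_{\PHI',L}$, the second formula of \cref{typeCroot}. That formula has period $2$, and its even constituent is $(t-2)(t-4)\cdots(t-2m)$. Take $r=q$ with $4\mid q$, so $g=\gcd\{2,q\}=2$ and $g^{-1}r=q/2$ is even. Then
\begin{align}
	\chi^\quasi_{2\PHI',L}(q)=2^m\prod_{i=1}^m\Bigl(\tfrac q2-2i\Bigr)=\prod_{i=1}^m(q-4i)=(q-4)(q-8)\cdots(q-2(\ell-3)),
\end{align}
since $4m=2(\ell-3)$. Multiplying by $c(q)=2$ gives the stated formula.

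\emph{Main obstacle.} \Cref{dilated} is formulated for the coweight lattice $Z$ of $\PHI'$, not for an arbitrary lattice. We therefore need its proof (the elementary-divisor formula, which scales uniformly under $S\mapsto kS$) to work for any lattice. It does, because the argument never used that the lattice was $Z$. We also need to confirm that $M$ is exactly $L$ in both the $B_{\ell-2}$ and $BC_m$ cases; that is the $BC$ description of $\DELTA^\vee$ given earlier.
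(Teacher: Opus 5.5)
Your proposal is correct and is essentially the paper's proof: $\omega=1$ is read off from \cref{typeDroot}, $\omega=\omega_1$ is the even constituent of the second formula of \cref{typeCroot} in rank $\ell-2$, and $\omega\in\{\omega_{\ell-1},\omega_\ell\}$ comes from applying \cref{dilated} with $k=2$ to that same second formula and multiplying by $c(q)$ for $o(\omega)=4$. Your explicit evaluation $2^m\prod_{i=1}^m(\tfrac q2-2i)=\prod_{i=1}^m(q-4i)$ for $4\mid q$, the factor $\varphi(4)=2$, and your remark that \cref{dilated} holds for any lattice (its elementary-divisor proof never uses that the lattice is $Z$) spell out details that the paper's one-line proof leaves implicit.
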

\begin{proof}
	In the case where $\omega = 1$, see \cref{typeDroot}.
	If $\omega =\omega_1$, then $\chi_{\PHI,q}(\omega)$ is equal to the second formula of \cref{typeCroot}.
	When $\omega \in \{\omega_{\ell-1},\omega_\ell\}$, use \cref{dilated} for the second formula of \cref{typeCroot}.
\end{proof}

\subsection{type $E_6$}\ 

\noindent
Let $\PHI$ be a root system of type $E_6$, and $\omega = \omega_j \in W_{A_\circ} \setminus \{1\}$.
Then $o(\omega) = 3$ and $\PHI_\re^\omega$ is of type $G_2$ and 
\begin{align}
	P(\beta^\omega) = \begin{cases*}
		\{0,\pm1,\pm2\}		& if $\beta^\omega$ is a short root of $\PHI_\re^\omega$;\\
		\{0\}			& if $\beta^\omega$ is a long root of $\PHI_\re^\omega$.
	\end{cases*}
\end{align}
Therefore, by \cref{lem6.1}, the equation \cref{chiphiq} can be expressed as follows:
\begin{align}
	\chi_{\PHI,q}(\omega) &= c(q) \cdot \#\Bigset{\pi_T(x) \in T_M[q]}{\frac{p}{3} + (\beta^\omega,x) \not\in \mathbb{Z} \tforall \beta^\omega \in \PHI_\re^\omega,\ p \in P(\beta^\omega)}\\
	&= c(q) \cdot \#\Bigset{\pi_T(x) \in T_M[q]}{
		\begin{lgathered}
			(3\beta^\omega,x) \not\in \mathbb{Z} \text{ if $\beta^\omega \in \PHI_\re^\omega$ is a short root},\\
			(\beta^\omega,x) \not\in \mathbb{Z} \text{ if $\beta^\omega \in \PHI_\re^\omega$ is a long root}
	\end{lgathered}}
\end{align}
Let $\PHI' \ceq (\PHI_\re^\omega)^\vee$.
Then we have
\begin{align}
	\chi_{\PHI,q}(\omega) 
	&= c(q) \cdot \#\Bigset{\pi_T(x) \in T_M[q]}{
		\begin{lgathered}
			(3\beta^\omega,x) \not\in \mathbb{Z} \text{ if $\beta^\omega \in \PHI_\re^\omega$ is a short root},\\
			(\beta^\omega,x) \not\in \mathbb{Z} \text{ if $\beta^\omega \in \PHI_\re^\omega$ is a long root}
	\end{lgathered}}\\
	&= c(q) \cdot \#\bigset{\pi_T(x) \in T_M[q]}{(\beta,x) \not\in \mathbb{Z} \tforall \beta \in \PHI'}\\
	&= c(q) \cdot \chi_{\PHI',M}^\quasi(q).
\end{align}

\begin{theorem}[type $E_6$]
	Let $\PHI$ be a root system of type $E_6$, and $\omega = \omega_j \in W_{A_\circ}$.
	Then 
	\begin{align}
		\chi_{\PHI,q}(\omega) = \begin{cases*}
			\!\!\!\begin{array}{ll}
			(q-1)(q-4)(q-5)(q-7)(q-8)(q-11)				& \textup{if $\omega = 1$ and $\gcd\{6,q\} = 1$;}\\
			(q-2)(q-4)(q-8)(q-10)(q^2-12q+26)			& \textup{if $\omega = 1$ and $\gcd\{6,q\} = 2$;}\\
			(q-3)(q-9)(q^4-24q^3+195q^2-612q+480)		& \textup{if $\omega = 1$ and $\gcd\{6,q\} = 3$;}\\
			(q-6)^2(q^4-24q^3+186q^2-504q+480)			& \textup{if $\omega = 1$ and $\gcd\{6,q\} = 6$;}\\
			\end{array}\\
			\!\!\!\begin{array}{ll}
			0											& \textup{if $\omega \in \{\omega_1,\omega_6\}$ and $\gcd\{6,q\} = 1$;}\\
			0											& \textup{if $\omega \in \{\omega_1,\omega_6\}$ and $\gcd\{6,q\} = 2$;}\\
			2(q-3)(q-9)									& \textup{if $\omega \in \{\omega_1,\omega_6\}$ and $\gcd\{6,q\} = 3$;}\\
			2(q-6)^2									& \textup{if $\omega \in \{\omega_1,\omega_6\}$ and $\gcd\{6,q\} = 6$}.
			\end{array}
		\end{cases*}
	\end{align}
\end{theorem}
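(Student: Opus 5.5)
The proof splits into the case $\omega = 1$ and the case $\omega \in \{\omega_1,\omega_6\}$.

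\emph{The case $\omega = 1$.} Here $\chi_{\PHI,q}(1) = \chi_\PHI^\quasi(q)$, so the four constituents are exactly those of \cref{typeE6root}, and nothing more is needed.

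\emph{The case $\omega \in \{\omega_1,\omega_6\}$.} By the reduction carried out just before the statement, we have $\chi_{\PHI,q}(\omega) = c(q)\cdot \chi_{\PHI',M}^\quasi(q)$ with $\PHI' = (\PHI^\omega_\re)^\vee$ and $\PHI^\omega_\re$ of type $G_2$. Since $o(\omega)=3$, the function $c(q)$ equals $0$ for $3 \nmid q$ and $\varphi(3)=2$ for $3 \mid q$. This immediately gives the value $0$ in the two cases $\gcd\{6,q\}\in\{1,2\}$.

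For $3 \mid q$ we need $\chi_{\PHI',M}^\quasi(q)$ where $M$ is the coweight lattice of $\PHI^\omega_\re$. The key point is to identify this with the second formula of \cref{typeG2root}. That formula computes $\chi_{\PHI^\vee,Z}^\quasi$ for a $G_2$ system $\PHI$ with coweight lattice $Z$. In our situation $\PHI^\omega_\re$ plays the role of that $G_2$ system and $M$ plays the role of $Z$.

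One subtlety must be checked. In \cref{typeG2root}, "$\PHI^\vee$" is used via the coefficient matrix obtained by multiplying the short-root columns by $3$. Thus the relevant arrangement is $\{3\beta : \beta \text{ short}\}\cup\{\beta:\beta\text{ long}\}$, which is proportional to the true coroots $\{2\beta/(\beta,\beta)\}$ up to a global scalar. The condition obtained from \cref{lem6.1} in the reduction is precisely $(3\beta^\omega,x)\notin\mathbb{Z}$ for short $\beta^\omega$ and $(\beta^\omega,x)\notin\mathbb{Z}$ for long $\beta^\omega$. So the two counting sets coincide literally, with no normalization ambiguity.

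Reading off \eqref{G2vee}, we get $(q-3)(q-9)$ when $\gcd\{6,q\}=3$ and $(q-6)^2$ when $\gcd\{6,q\}=6$. Multiplying by $c(q)=2$ yields the stated values $2(q-3)(q-9)$ and $2(q-6)^2$.

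\emph{Main obstacle.} The main risk is not computational but bookkeeping. We must confirm that the table data for $E_6$ (namely $o(\omega)=3$, type $G_2$, and $P(\beta^\omega)$ being $\{0,\pm1,\pm2\}$ on short roots and $\{0\}$ on long roots) translate exactly into the short-root-times-$3$ matrix $S'$ used in \cref{typeG2root}, with respect to the basis $\DELTA^\omega$ and its dual basis of $M$.

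As a sanity check, we can verify the duality \cref{thm4.6} for this character. Here $h=12$ and $r(\omega)=2$, so $\delta(\omega)=(-1)^{6-2}=1$, and the theorem predicts $\chi_{\PHI,q}(\omega)=\chi_{\PHI,12-q}(\omega)$. For $\gcd\{6,q\}=3$ this reads $2(q-3)(q-9)=2(9-q)(3-q)$, which holds. For $\gcd\{6,q\}=6$ it reads $2(q-6)^2=2(6-q)^2$, which also holds. This confirms the constituents are consistent.
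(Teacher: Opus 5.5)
Your proposal is correct and is essentially the paper's own proof: the case $\omega = 1$ is read off from \cref{typeE6root}, and for $\omega \in \{\omega_1,\omega_6\}$ you combine the reduction $\chi_{\PHI,q}(\omega) = c(q)\cdot\chi^\quasi_{\PHI',M}(q)$ with the second formula of \cref{typeG2root}, exactly as the paper does (here $c(q)=0$ for $3\nmid q$ and $c(q)=\varphi(3)=2$ for $3\mid q$). Two further points of yours are correct supplements that the paper leaves implicit. The ``$\PHI^\vee$'' of \cref{typeG2root} is really the integrally rescaled system $\{3\beta\}_{\beta\ \text{short}}\cup\{\beta\}_{\beta\ \text{long}}$, which matches the conditions produced by \cref{lem6.1}. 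Your consistency check against \cref{thm4.6} also holds.
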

\begin{proof}
	In the case where $\omega = 1$, see \cref{typeE6root}.
	When $\omega \in \{\omega_1,\omega_6\}$, use the second formula of \cref{typeG2root}.
\end{proof}

\subsection{type $E_7$}\ 

\noindent
Let $\PHI$ be a root system of type $E_7$, and $\omega = \omega_j \in W_{A_\circ} \setminus \{1\}$.
Then $o(\omega) = 2$ and $\PHI_\re^\omega$ is of type $F_4$ and 
\begin{align}
	P(\beta^\omega) = \begin{cases*}
		\{0,\pm1\}		& if $\beta^\omega$ is a short root of $\PHI_\re^\omega$;\\
		\{0\}			& if $\beta^\omega$ is a long root of $\PHI_\re^\omega$.
	\end{cases*}
\end{align}
Therefore, by \cref{lem6.1}, the equation \cref{chiphiq} can be expressed as follows:
\begin{align}
	\chi_{\PHI,q}(\omega) &= c(q) \cdot \#\Bigset{\pi_T(x) \in T_M[q]}{\frac{p}{2} + (\beta^\omega,x) \not\in \mathbb{Z} \tforall \beta^\omega \in \PHI_\re^\omega,\ p \in P(\beta^\omega)}\\
	&= c(q) \cdot \#\Bigset{\pi_T(x) \in T_M[q]}{
		\begin{lgathered}
			(2\beta^\omega,x) \not\in \mathbb{Z} \text{ if $\beta^\omega \in \PHI_\re^\omega$ is a short root},\\
			(\beta^\omega,x) \not\in \mathbb{Z} \text{ if $\beta^\omega \in \PHI_\re^\omega$ is a long root}
	\end{lgathered}}
\end{align}
Let $\PHI' \ceq (\PHI_\re^\omega)^\vee$.
Then we have
\begin{align}
	\chi_{\PHI,q}(\omega) 
	&= c(q) \cdot \#\Bigset{\pi_T(x) \in T_M[q]}{
		\begin{lgathered}
			(2\beta^\omega,x) \not\in \mathbb{Z} \text{ if $\beta^\omega \in \PHI_\re^\omega$ is a short root},\\
			(\beta^\omega,x) \not\in \mathbb{Z} \text{ if $\beta^\omega \in \PHI_\re^\omega$ is a long root}
	\end{lgathered}}\\
	&= c(q) \cdot \#\bigset{\pi_T(x) \in T_M[q]}{(\beta,x) \not\in \mathbb{Z} \tforall \beta \in \PHI'}\\
	&= c(q) \cdot \chi_{\PHI',M}^\quasi(q).
\end{align}

\begin{theorem}[type $E_7$]
	Let $\PHI$ be a root system of type $E_7$, and $\omega = \omega_j \in W_{A_\circ}$.
	Then 
\begin{align}
	\chi_{\PHI,q}(w) = \begin{cases*}
		\!\!\!\begin{array}{ll}
		(q-1)(q-5)(q-7)(q-9)(q-11)(q-13)(q-17)						& \textup{if $\omega = 1$ and $\gcd\{12,q\} = 1$;}\\
		(q-2)(q-10)(q-13)(q-14)(q^3-24q^2+155q-342)					& \textup{if $\omega = 1$ and $\gcd\{12,q\} = 2$;}\\
		(q-3)(q-9)(q-15)(q^4-36q^3+438q^2-2052q+2289)				& \textup{if $\omega = 1$ and $\gcd\{12,q\} = 3$;}\\
		(q-4)(q-5)(q-8)(q-16)(q^3-30q^2+263q-504)					& \textup{if $\omega = 1$ and $\gcd\{12,q\} = 4$;}\\
		(q-6)(q^6-57q^5+1275q^4-14085q^3+79374q^2-213228q+234360)	& \textup{if $\omega = 1$ and $\gcd\{12,q\} = 6$;}\\
		(q-12)(q^6-51q^5+1005q^4-9675q^3+47784q^2-116064q+120960)	& \textup{if $\omega = 1$ and $\gcd\{12,q\} = 12$;}\\
		\end{array}\\
		\!\!\!\begin{array}{ll}
		0							& \textup{if $\omega = \omega_7$ and $\gcd\{12,q\} = 1$;}\\
		(q-2)(q-10)^2(q-14)			& \textup{if $\omega = \omega_7$ and $\gcd\{12,q\} = 2$;}\\
		0							& \textup{if $\omega = \omega_7$ and $\gcd\{12,q\} = 3$;}\\
		(q-4)(q-8)^2(q-16)			& \textup{if $\omega = \omega_7$ and $\gcd\{12,q\} = 4$;}\\
		(q-6)(q^3-30q^2+268q-552)	& \textup{if $\omega = \omega_7$ and $\gcd\{12,q\} = 6$;}\\
		(q-12)(q^3-24q^2+160q-384)	& \textup{if $\omega = \omega_7$ and $\gcd\{12,q\} = 12$}.
		\end{array}
	\end{cases*}
\end{align}
\end{theorem}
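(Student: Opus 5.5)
The plan is to split according to whether $\omega$ is trivial. For $\omega = 1$ we have $\chi_{\PHI,q}(1) = \chi_\PHI^\quasi(q)$, so the six constituents are exactly those of \cref{typeE7root}. For $E_7$ we have $J = \{0,7\}$, so $W_{A_\circ} = \OMEGA = \{1,\omega_7\}$ by \cref{prop4.6}, and the only other case is $\omega = \omega_7$, which has $o(\omega) = 2$.

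For $\omega = \omega_7$, start from the reduction just carried out before the statement:
\begin{align}
	\chi_{\PHI,q}(\omega) = c(q)\cdot\chi^\quasi_{\PHI',M}(q),
\end{align}
where $\PHI' = (\PHI_\re^\omega)^\vee$ and $\PHI_\re^\omega$ is of type $F_4$ with coweight lattice $M$. Here $c(q) = \varphi(2) = 1$ if $q$ is even and $c(q) = 0$ if $q$ is odd. This immediately gives the value $0$ in the cases $\gcd\{12,q\} \in \{1,3\}$.

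For $q$ even we need $\chi^\quasi_{\PHI',M}(q)$. Since $M$ is the coweight lattice $Z$ of the $F_4$ system $\PHI_\re^\omega$, this is precisely the function $\chi^\quasi_{\PHI^\vee,Z}$ computed in \cref{F4vee} of \cref{typeF4root}. Reading off its constituents for $\gcd\{12,q\} \in \{2,4,6,12\}$ gives the four listed polynomials. These are $(q-2)(q-10)^2(q-14)$, $(q-4)(q-8)^2(q-16)$, $(q-6)(q^3-30q^2+268q-552)$ and $(q-12)(q^3-24q^2+160q-384)$.

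The only genuinely delicate point is justifying the identification of the constraint set. The long roots of $\PHI_\re^\omega$ have $P(\beta^\omega) = \{0\}$ and give the condition $(\beta^\omega,x)\notin\mathbb{Z}$. The short roots have $P = \{0,\pm1\}$ and, by \cref{lem6.1}, give $(2\beta^\omega,x)\notin\mathbb{Z}$. This must coincide with the condition $(\gamma,x)\notin\mathbb{Z}$ for all $\gamma \in (\PHI_\re^\omega)^\vee$, up to positive scaling of lines, since only the hyperplanes matter. For $F_4$ with long roots normalized to squared length $2$, the coroot of a long root is the root itself, and the coroot of a short root is $2\beta^\omega$, so the two conditions agree. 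This is also exactly how the matrix $S'$ in the proof of \cref{typeF4root} was formed, by doubling the short-root columns.

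If I had to worry about something, it would be checking that the lattice in \cref{F4vee} really is $M$ with the same normalization of roots. The theorem from \cite{UchiumiRoot} that $M$ is the coweight lattice of $\PHI_\re^\omega$ handles this. The remaining work is bookkeeping with $c(q)$.
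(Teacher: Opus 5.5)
Your proposal is correct and is essentially the paper's own argument. The case $\omega=1$ is read off from \cref{typeE7root}. For $\omega=\omega_7$ (so $o(\omega)=2$, giving $c(q)=1$ for even $q$ and $0$ for odd $q$), the reduction $\chi_{\PHI,q}(\omega)=c(q)\cdot\chi^\quasi_{(\PHI_\re^\omega)^\vee,M}(q)$ is combined with the second formula \cref{F4vee} of \cref{typeF4root}, exactly as you do.

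One phrase should be dropped: ``up to positive scaling of lines, since only the hyperplanes matter.'' In this torus/mod-$q$ setting, replacing $\gamma$ by $k\gamma$ genuinely changes the condition $(\gamma,x)\notin\mathbb{Z}$; that change is exactly what \cref{dilated} measures, so only signs may be ignored. Your actual check is what the argument needs: long roots contribute $\beta^\omega=(\beta^\omega)^\vee$, and short roots contribute $2\beta^\omega=(\beta^\omega)^\vee$ in the inherited normalization, so the two constraint sets are exactly equal.
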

\begin{proof}
	In the case where $\omega = 1$, see \cref{typeE7root}.
	When $\omega = \omega_7$, use the second formula of \cref{typeF4root}.
\end{proof}

\section*{Acknowledgement}

The author would like to thank Professor Masahiko Yoshinaga for the helpful discussions and comments on this research. 
The author also acknowledge support by JSPS KAKENHI, Grant Number 25KJ1735.


\bibliographystyle{amsplain}
\bibliography{bibfile}

\end{document}